\def\frk{\frak}               
\def\mm{{\frk m}}
\def\Phi{{\frk n}}
\def\Phi{{\frk N}}
\def\opn#1#2{\def#1{\operatorname{#2}}} 
\opn\projdim{proj\,dim} \opn\injdim{inj\,dim} \opn\rank{rank}
\opn\depth{depth} \opn\sdepth{sdepth} \opn\fdepth{fdepth}
\opn\grade{grade} \opn\height{height} \opn\embdim{emb\,dim}
\opn\codim{codim}  \opn\min{min} \opn\max{max}
\opn\Tr{Tr} \opn\bigrank{big\,rank}
\opn\superheight{superheight}\opn\lcm{lcm}
\opn\trdeg{tr\,deg}
\opn\reg{reg} \opn\lreg{lreg} \opn\ini{in} \opn\lpd{lpd}
\opn\size{size}
\opn\div{div} \opn\Div{Div} \opn\cl{cl} \opn\Cl{Cl}
\opn\Spec{Spec} \opn\Supp{Supp} \opn\supp{supp} \opn\Sing{Sing}
\opn\Ass{Ass} \opn\Min{Min}
\opn\Ann{Ann} \opn\Rad{Rad} \opn\Soc{Soc}
\opn\Im{Im} \opn\Ker{Ker} \opn\Coker{Coker} \opn\Am{Am}
\opn\Hom{Hom} \opn\Tor{Tor} \opn\Ext{Ext} \opn\End{End}
\opn\Aut{Aut} \opn\id{id}  \opn\deg{deg}
\opn\nat{nat}
\opn\pff{pf}
\opn\Pf{Pf} \opn\GL{GL} \opn\SL{SL} \opn\mod{mod} \opn\ord{ord}
\opn\Gin{Gin} \opn\Hilb{Hilb}
\opn\aff{aff} \opn\con{conv} \opn\relint{relint} \opn\st{st}
\opn\lk{lk} \opn\cn{cn} \opn\core{core} \opn\vol{vol}
\opn\link{link} \opn\star{star}
\opn\gr{gr}
\def\pot#1#2{#1[\kern-0.28ex[#2]\kern-0.28ex]}
\opn\dirlim{\underrightarrow{\lim}}
\opn\inivlim{\underleftarrow{\lim}}
\let\to=\rightarrow
\def\Implies{\ifmmode\Longrightarrow \else
        \unskip${}\Longrightarrow{}$\ignorespaces\fi}
\def\implies{\ifmmode\Rightarrow \else
        \unskip${}\Rightarrow{}$\ignorespaces\fi}
\def\iff{\ifmmode\Longleftrightarrow \else
        \unskip${}\Longleftrightarrow{}$\ignorespaces\fi}
\newtheorem{Theorem}{Theorem}[]
\newtheorem{Lemma}[Theorem]{Lemma}
\newtheorem{Corollary}[Theorem]{Corollary}
\newtheorem{Proposition}[Theorem]{Proposition}
\theoremstyle{definition}
\newtheorem{Remark}[Theorem]{Remark}
\newtheoremstyle{subsection-tweak}
   {11pt}
   {3pt}%
   {}
   {}%
   {\bfseries}
   {}%
   {.5em}
   {\thmnumber{\@{#1}{}\@{#2}.}%
    \thmnote{~{\bfseries#3.}}}    
\newcounter{numberingbase}
\theoremstyle{subsection-tweak}
\newtheorem{bpp}[Theorem]{}
\newtheorem{bppt}[numberingbase]{}
\newcommand{\bbpp}{\begin{bpp}}
\newcommand{\eepp}{\end{bpp}}
\newcommand{\bbppt}{\begin{bppt}}
\newcommand{\eeppt}{\end{bppt}}
\theoremstyle{theorem}
\theoremstyle{definition}
\newcommand{\val}{\mathrm{val}}		
\providecommand{\qxq}[1]{\quad\text{#1}\quad}
\newcommand{\tst}{\textstyle}
\let\epsilon\varepsilon
\let\phi=\varphi
\def\qed{\ifhmode\textqed\fi
      \ifmmode\ifinner\quad\qedsymbol\else\dispqed\fi\fi}
\def\textqed{\unskip\nobreak\penalty50
       \hskip2em\hbox{}\nobreak\hfil\qedsymbol
       \parfillskip=0pt \finalhyphendemerits=0}
\def\dispqed{\rlap{\qquad\qedsymbol}}
\opn\dis{dis}
\def\pnt{{\raise0.5mm\hbox{\large\bf.}}}
\opn\Lex{Lex}
\begin{document}

\title{Pure transcendental, immediate valuation ring extensions as limits of smooth algebras}

\author{ Dorin Popescu}

\dedicatory{In the honour  of Michael Artin.}

\address{Simion Stoilow Institute of Mathematics of the Romanian Academy,
Research unit 5, P.O. Box 1-764, Bucharest 014700, Romania,}

\address{University of Bucharest, Faculty of Mathematics and Computer Science
Str. Academiei 14, Bucharest 1, RO-010014, Romania,}

\address{ Email: {\sf dorin.m.popescu@gmail.com}}

\begin{abstract} We show that a pure transcendental, immediate extension of valuation rings  $V\subset V'$  containing a field  is a filtered union of  smooth $V$-subalgebras of $V'$.\\
 {\it Key words }: immediate extensions, pseudo convergent sequences, pseudo limits,  smooth morphisms, Henselian rings.   \\
 {\it 2020 Mathematics Subject Classification: Primary 13F30, Secondary 13A18,13F20,13B40.}
\end{abstract}

\maketitle

\section*{Introduction}

An immediate extension of valuation rings is an extension inducing trivial extensions on residue fields and group value extensions.  The theorem stated below \cite[Theorem 2]{P} gives on immediate extensions of valuation rings a form of 
Zariski's Uniformization Theorem \cite{Z}.

\begin{Theorem}\label{T0} If $V\subset V'$ is an immediate extension of valuation rings   containing $\bf Q$
then $V'$ is a filtered direct limit of smooth $V$-algebras. 
\end{Theorem}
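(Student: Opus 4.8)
The plan is to invoke the standard criterion for ind-smoothness: a $V$-algebra $B$ is a filtered direct limit of finitely presented smooth $V$-algebras if and only if every $V$-morphism $C\to B$ from a finitely presented $V$-algebra $C$ factors as $C\to D\to B$ with $D$ a finitely presented smooth $V$-algebra. This property is transitive in towers $V\subseteq W\subseteq V'$ (if $W$ is ind-smooth over $V$ and $V'$ is ind-smooth over $W$, then descending the factorization to a finite level and composing smooth maps shows $V'$ is ind-smooth over $V$), and it passes to filtered colimits of the intermediate rings. Hence, choosing a transfinite increasing chain of subfields $\Frac(V)=F_0\subseteq F_1\subseteq\cdots\subseteq F_\lambda=\Frac(V')$ with each $F_{\alpha+1}=F_\alpha(x_\alpha)$ simple and with unions at limit stages, and setting $V_\alpha:=V'\cap F_\alpha$, it suffices to prove that each one-step extension $V_\alpha\subseteq V_{\alpha+1}$ is ind-smooth; a transfinite induction then yields the theorem. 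Each $V_\alpha$ is again immediate over $V$ (being squeezed between $V$ and the immediate extension $V'$, it has the same value group and residue field), and the hypothesis $\mathbf{Q}\subseteq V$ forces $V$, hence every $V_\alpha$, to be equicharacteristic zero.

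So I am reduced to a simple immediate extension $V\subseteq W$ with $\Frac(W)=\Frac(V)(x)$, where after possibly replacing $x$ by $1/x$ we may take $x\in W\setminus V$. Here I appeal to Kaplansky's theory of pseudo-convergent sequences: $x$ is the pseudo-limit of a pseudo-convergent sequence $(a_\nu)$ in $V$ with no pseudo-limit in $V$, and $\val(x-a_\nu)=\val(a_{\nu+1}-a_\nu)=:\gamma_\nu$ is strictly increasing. There are two cases, according to the type of $(a_\nu)$.

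Case 1: $x$ is transcendental over $\Frac(V)$, equivalently $(a_\nu)$ is of transcendental type. Choose $c_\nu\in V$ with $\val(c_\nu)=\gamma_\nu$ and set $w_\nu=(x-a_\nu)/c_\nu$; this is a unit of $W$ and is transcendental over $\Frac(V)$, so $V[w_\nu]$ is a polynomial ring on which $\val\geq 0$. For a suitable $g_\nu\in V[w_\nu]$ that is a unit of $W$ one obtains a finitely presented smooth $V$-algebra $D_\nu:=V[w_\nu][1/g_\nu]\subseteq W$, and the substitutions $x=a_\nu+c_\nu w_\nu=a_{\nu+1}+c_{\nu+1}w_{\nu+1}$ define compatible polynomial change-of-variable maps $D_\nu\to D_{\nu+1}$ over $V$. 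One then checks that $W=\bigcup_\nu D_\nu$: any element of $W$ is a ratio of polynomials in $x$ over $V$ of nonnegative value, and substituting $x=a_\nu+c_\nu w_\nu$ for $\nu$ large lets the error terms be absorbed — and it is precisely here that the strict growth of the $\gamma_\nu$ is used. This is the mechanism of the present paper's main theorem in the case of a single pseudo-limit; the paper runs it for an arbitrary algebraically independent family of pseudo-limits, which is why it obtains an honest filtered union of smooth subalgebras.

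Case 2: $x$ is algebraic over $\Frac(V)$, i.e. $(a_\nu)$ is of algebraic type. Let $f$ be, up to rescaling $x$ by an element of $\Frac(V)^\times$, a polynomial over $V$ of least degree realizing the algebraic type; since the characteristic is zero, $f$ is separable, so $f'(x)\neq 0$. A Newton-type refinement along $(a_\nu)$ — replacing $a_\nu$ by $a_\nu-f(a_\nu)/f'(a_\nu)$ and iterating — together with the vanishing of the defect in equicharacteristic zero shows that $W$ is a filtered direct limit of standard \'etale, hence smooth, $V$-algebras of the form $V[Y]/(h)[1/h']$; these need not be domains, so the maps to $W$ need not be injective, which is exactly why only a direct limit, and not a union of subalgebras, survives once an algebraic part is present. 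Combining the two cases through the transfinite induction of the first paragraph proves the theorem. The main obstacle is Case 1: building the explicit filtered system of smooth algebras with well-defined transition maps and verifying that its colimit is all of $W$ — this quantitative step is the heart of the matter and is what the present paper upgrades; Case 2 is comparatively soft once one knows the defect is absent in equicharacteristic zero.
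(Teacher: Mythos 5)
First, a point of comparison: this paper does not actually prove Theorem \ref{T0}; it quotes it from \cite[Theorem 2]{P}, and the closest in-paper analogues are Lemma \ref{1} and Theorem \ref{key}, where the author explicitly disposes of the easy situation (``We can assume that $v$ is algebraic, otherwise the result follows \ldots using \cite[Lemma 15]{P}'') and devotes Sections 1--3 to the remaining one. Your reduction to simple extensions via a transfinite chain of intermediate valuation rings $V_\alpha=V'\cap F_\alpha$, using transitivity of ind-smoothness and passage to filtered colimits, is fine and is in the spirit of how the general case is reduced.

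The genuine gap is in your dichotomy. You assert that $x$ is transcendental over $\Frac(V)$ \emph{if and only if} the pseudo convergent sequence $(a_\nu)$ is of transcendental type, and your two cases are built on this. The equivalence is false: by Kaplansky's theory \cite{Kap}, a sequence of \emph{algebraic} type has pseudo limits in suitable immediate extensions that are transcendental over $K$, and this mixed situation (algebraic-type sequence, transcendental pseudo limit) is precisely the hard case. Your Case 1 mechanism --- $\val(f(x))$ eventually equal to $\val(f(a_\nu))$, error terms ``absorbed'' so that $W=\bigcup_\nu V[w_\nu][1/g_\nu]$ --- is only valid for transcendental type; if $(a_\nu)$ is algebraic there are polynomials $f$ (of degree at least the critical degree) with $\val(f(a_\nu))$ strictly increasing, and then $f(x)$ need not become a unit multiple of an element of $V$ inside any $V[w_\nu][1/g_\nu]$, so the union claim breaks. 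Handling exactly this is the content of the Ostrowski-type Lemmas \ref{o}--\ref{oe1}, Lemmas \ref{ka0}--\ref{ka2}, Propositions \ref{p}, \ref{p'} and Theorem \ref{key} here (in characteristic $p$), and of the corresponding work in \cite{P} in characteristic $0$; your proposal never addresses it, since Case 2 silently assumes $x$ itself is algebraic over $K$. Moreover, even that Case 2 is an assertion rather than a proof: ``Newton-type refinement plus vanishing of the defect in equicharacteristic zero'' is a placeholder for the actual argument (the algebraic half of \cite[Theorem 2]{P}, cf.\ Theorem \ref{T2} here), which is not soft. So as it stands the proposal reduces the theorem correctly to simple immediate extensions but proves neither of the two resulting cases in the generality needed.
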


In the Noetherian case a morphism of rings is a filtered direct limit of smooth algebras iff it is a regular morphism (see \cite{Po0}, \cite{S}). This was important for a positive solution of some Artin's conjectures \cite{Ar}.
 An extension of the above theorem stated in  \cite[Theorem 2]{P1} is the following theorem. 

\begin{Theorem} \label{T1} Let $V\subset V'$ be an extension of valuation rings containing $\bf Q$, $\Gamma\subset \Gamma'$ the value group extension of $V\subset V'$  and $\val: K'^{*}\to \Gamma'$ the valuation of $V'$. Then $V'$  is a filtered direct limit of smooth $V$-algebras if and only if  the following statements hold
\begin{enumerate}

\item for each $q\in \Spec V$ the ideal $qV'$ is prime,

\item  for any prime ideals $q_1,q_2\in \Spec V$ such that $q_1\subset q_2$ and height$(q_2/q_1)=1$  and any $x'\in q_2V'\setminus q_1'$ there exists $x\in V$ such that $\val(x')=\val(x)$,  where $q_1'\in \Spec V'$ is the prime ideal corresponding to the maximal ideal of $V_{q_1}\otimes_V V'$, that is the maximal prime ideal of $V'$ lying on $q_1$.
\end{enumerate}
\end{Theorem}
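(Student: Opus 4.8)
The plan is to reformulate the two conditions via convex subgroups and then treat the two implications separately. Recall the inclusion-reversing bijection $q\mapsto\Delta_q$ between $\Spec V$ and the convex subgroups of $\Gamma$, with $qV=\{a\in V:\val(a)>\delta\ \text{for all }\delta\in\Delta_q\}$, and similarly $\mathfrak p\mapsto\Delta'_{\mathfrak p}$ for $V'$ and $\Gamma'$. In this language condition (1) says that each extended ideal $qV'$ is prime (equivalently that $\{\gamma'\ge 0:\gamma'<\val(a)\ \text{for all }a\in q\}$ is closed under addition in $\Gamma'_{\ge0}$), in which case automatically $\Delta'_{qV'}\cap\Gamma=\Delta_q$; and, granting (1), condition (2) says that for adjacent primes $q_1\subset q_2$ the natural map of rank-one data $\Delta_{q_1}/\Delta_{q_2}\to\Delta'_{q_1'}/\Delta'_{q_2V'}$ (with $q_1'$ as in the statement) is an isomorphism — it is automatically injective given (1), so this is the assertion that the rank-one stratum of $\Gamma$ between $q_1$ and $q_2$ already realizes the corresponding stratum of $\Gamma'$. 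I would keep this dictionary in mind throughout.

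Turn first to necessity; write $V'=\varinjlim_i C_i$ with each $C_i$ smooth over $V$, the transition maps not necessarily injective. For (1): base change along $V\to V/q$ gives $V'/qV'=V'\otimes_V(V/q)=\varinjlim_i C_i/qC_i$, and $C_i/qC_i$ is smooth over the domain $V/q$, hence reduced; a filtered colimit of reduced rings is reduced, so $V'/qV'$ is reduced. Being a residue ring of the valuation ring $V'$ it has totally ordered lattice of ideals, and a reduced ring with totally ordered ideals is a domain (if $ab=0$ with $(b)\subseteq(a)$, write $b=ac$; then $b^2=b(ac)=(ba)c=0$, so $b=0$); hence $qV'$ is prime. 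For (2), given adjacent $q_1\subset q_2$ I would localize and pass to a residue ring: $R:=V_{q_2}/q_1V_{q_2}$ is a rank-one valuation ring containing $\mathbf Q$, and a suitable localization-then-residue-ring $R'$ of $V'$ is again a valuation ring receiving an \emph{ind-smooth} map $R\to R'$, because base change, localization, and passage to a residue ring modulo an extended prime each preserve the property of being a filtered colimit of smooth algebras. One is thus reduced to analysing ind-smoothness of a rank-one extension $R\subset R'$, where condition (2) for the pair $q_1\subset q_2$ must be read off from the essentially smooth local rings $(C_i)_{\mathfrak p}$ dominated by the valuation, via an Abhyankar-type inequality for smooth algebras over a rank-one valuation ring. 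I expect this rank-one reduction, and pinning down the exact valuation-theoretic constraint it yields, to be the delicate point of the necessity direction.

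For sufficiency, assume (1) and (2). By the standard device it suffices to show that every finitely generated $V$-subalgebra $A\subset V'$ fits into a commutative triangle $A\to D\to V'$ with $D$ smooth over $V$, after which $V'$ is the filtered colimit of such $D$. To produce $D$ I would run a N\'eron--Popescu-type desingularization of $A$ relative to $V'$, controlling the Jacobian ideal $H_{A/V}$ whose vanishing locus is exactly where $A\to V'$ fails to be smooth; conditions (1) and (2) are what allow one to move that locus out of the way. Roughly, (1) keeps the primes of $V$ from obstructing the spreading-out of $A$ over $\Spec V$, while (2) forces any value $\val(x')\in\Gamma'\setminus\Gamma$ occurring in $A$ to amount to transcendental room rather than to ramification inside a rank-one stratum; so after an ind-smooth \emph{purely transcendental} enlargement $V\subset V''\subset V'$ — adjoining suitable elements of $V'$, which is ind-smooth since polynomial extensions and localizations are — the residual extension $V''\subset V'$ is immediate, and Theorem \ref{T0} finishes it. The genuine obstacle, and the point where the companion result on pure transcendental immediate extensions enters, is making the transcendental enlargement and the immediate part cohere: one must realize the relevant $x'\in V'$ with $\val(x')\notin\Gamma$ through pseudo-convergent sequences and pseudo-limits over $V$ in a way compatible with smoothness, which is finer than either the immediate case or the rational function field case handled on its own.
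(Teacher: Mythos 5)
First, a point of comparison: the paper does not prove Theorem \ref{T1} at all --- it is quoted from \cite[Theorem 2]{P1} --- so there is no internal proof to measure you against; your text has to stand on its own as a proof, and it does not yet do so. What is genuinely correct and complete is the necessity of (1): smooth over the domain $V/q$ gives reduced fibres $C_i/qC_i$ (flat over a domain plus reduced generic fibre forces reducedness), a filtered colimit of reduced rings is reduced, and a reduced ring whose ideals are totally ordered is a domain; that argument is sound. But the necessity of (2) is only reduced (plausibly) to a rank-one situation and then abandoned: the step where ind-smoothness of the rank-one extension $R\to R'$ is shown to force $\val(x')\in\val(K^*)$ for $x'\in q_2V'\setminus q_1'$ --- the ``Abhyankar-type inequality'' you invoke --- is precisely the content of this half of the theorem, and you explicitly leave it as ``the delicate point''. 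That is a gap, not a proof.

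The sufficiency direction has a more serious problem. Your plan is: adjoin to $V$ the elements of $V'$ whose values fall outside $\Gamma$, obtaining an intermediate valuation ring $V''=V'\cap K''$, claim $V\subset V''$ is ind-smooth ``since polynomial extensions and localizations are'', and claim $V''\subset V'$ is immediate so that Theorem \ref{T0} applies. Neither claim is justified. The ring $V'\cap K(x')$ is a valuation ring of $K(x')$, in general far larger than any filtered union of localizations of polynomial $V$-algebras inside it; whether such an intermediate valuation ring is ind-smooth over $V$ is exactly the kind of nontrivial statement that Theorem \ref{T3} of this paper and \cite{P1} are devoted to (and in characteristic $p$ it can fail, cf.\ \cite[Example 3.13]{Po1}), so it cannot be dismissed with ``polynomial extensions and localizations are smooth''. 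Likewise, nothing in your argument shows that after such an enlargement the residual extension $V''\subset V'$ is immediate: you must show that conditions (1) and (2) force every value of $V'$ outside the strata already realized in $\Gamma$ to be ``transcendental room'', and that the residue field extension is also trivial after the enlargement; this is the heart of the matter and is asserted, not proved. In addition, condition (2) is phrased with $q_1'$ (the maximal prime of $V'$ over $q_1$), not $q_1V'$, and your convex-subgroup dictionary silently conflates the two; in the sufficiency direction, where you cannot yet assume $q_1V'$ prime has the consequences you want, this distinction has to be handled explicitly. As it stands the proposal proves one quarter of the statement and sketches intentions for the rest.
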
 
 
  It is well known that if the fraction field extension of an immediate extension $V\subset V'$ is finite and the characteristic of $V$ is $>0$, then $V'$ fails to  be  a filtered direct limit of  smooth $V$-algebras as shows  \cite[Example 3.13]{Po1} inspired from \cite{O} (see also \cite[Remark 6.10]{Po1}). However,
   B. Antieau, R. Datta \cite[Theorem 4.1.1]{AD} showed that every perfect valuation ring of characteristic $p>0$ is a filtered union of its smooth ${\bf F}_p$-subalgebras. This result is an application of \cite[Theorem 1.2.5]{T}, which relies on some results from \cite{J}. 

In \cite[Theorem 1]{P2} we state the following theorem.

\begin{Theorem}\label{T2} Let $V\subset V'$ be an  immediate extension of valuation rings containing a field  and  $K\subset K'$ the fraction field extension. If $K'/K$ is algebraic then $V'$ is a filtered
 union of its complete intersection $V$-subalgebras  of finite type.
\end{Theorem}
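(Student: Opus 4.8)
The plan is to reduce to a finite fraction field extension, to dispose of the one--generator case by a Gauss--lemma computation over a valuation ring, and then to build the required complete intersection subalgebras by induction on the degree of the fraction field, passing through a chain of intermediate valuation rings. First, since $K'/K$ is algebraic, $V'$ is the filtered union of the valuation rings $W_L:=V'\cap L$ with $L$ a finite subextension of $K'/K$, and each $V\subset W_L$ is again immediate, its value group and residue field being squeezed between those of $V$ and $V'$; hence one may assume $[K':K]<\infty$. (In equal characteristic $0$ this is already trivial -- a finite immediate extension is an equality -- so the content lies in characteristic $p>0$, where $W_L/V$ is a defect extension; this is also why the hypothesis ``containing a field'' is needed.) Secondly, it suffices to show that every $V$--subalgebra of finite type $D\subset V'$ lies in a complete intersection $V$--subalgebra of finite type: these then form a directed family (given two, their compositum is of finite type, hence lies in a third which is a complete intersection) whose union is $V'$, since $x'\in V[x']$ for every $x'\in V'$.

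For one element $x'\in V'$, choose a minimal polynomial $f\in V[X]$ of $x'$ over $K$ that is primitive (some coefficient a unit of $V$). Gauss's lemma over the valuation ring $V$ -- multiplicativity of content ideals for polynomials over a Pr\"ufer domain -- gives $fK[X]\cap V[X]=fV[X]$, hence the kernel of $V[X]\to V'$, $X\mapsto x'$, equals $(f)$ and $V[x']\cong V[X]/(f)$ is a complete intersection $V$--algebra of finite type ($f$ being a nonzerodivisor). The same computation, carried out over a valuation subring of $V'$ rather than over $V$, is the local model for the general case.

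For a general $D=V[w_1,\dots,w_n]\subset V'$ I would induct on $d:=[\Frac D:K]$; replacing $V'$ by $W:=V'\cap\Frac D$ we may assume $\Frac W=\Frac D$, with $V\subset W$ immediate. Choose a tower of simple extensions $K=K_0\subsetneq K_1\subsetneq\cdots\subsetneq K_r=\Frac W$, $K_i=K_{i-1}(z_i)$ with $z_i\in W_i:=V'\cap K_i$, refined so that each $K(w_k)$ occurs among the $K_i$; then each $W_{i-1}\subset W_i$ is immediate and $[K_{i-1}:K]<d$ for $i<r$. By Kaplansky's theory, $z_i$ is a pseudo limit of a pseudo convergent sequence in $W_{i-1}$ which, $K_i/K_{i-1}$ being algebraic, is of algebraic type, with associated minimal polynomial $f_i\in W_{i-1}[X]$, monic of degree $[K_i:K_{i-1}]$ once $z_i$ is rescaled to be integral over $W_{i-1}$. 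Now build complete intersection $V$--subalgebras of finite type $V=C_0\subseteq\cdots\subseteq C_r$ with $C_i\subset W_i$ and $\Frac C_i=K_i$: given $C_{i-1}$, the induction hypothesis applied to $W_{i-1}$ lets one enlarge $C_{i-1}$ inside $W_{i-1}$ to a complete intersection $V$--subalgebra of finite type containing the finitely many coefficients of $f_i$ and, when $i-1<r$, the generators $w_k$ lying in $K_{i-1}$; then $f_i$ is monic over $C_{i-1}$, so by a routine regular--sequence argument $C_i:=C_{i-1}[X]/(f_i)$ is a complete intersection of finite type over $V$, and the division algorithm by the monic $f_i$ shows it embeds in $W_i$ with fraction field $K_i$. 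A generator $w_k$ that generates $\Frac W$ itself is taken as $z_r$; then $C_r$ is the desired complete intersection of finite type containing $D$.

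The step I expect to be the main obstacle is the enlargement together with the absorption of the last generators: a priori a generator $w_k$ lies only in $\Frac C_i$, of the form $\beta/\gamma$ with $\beta,\gamma\in C_i$ and $\gamma$ not a unit, and adjoining it by the naive relation $\gamma Y-\beta$ wrecks the regular--sequence property of the defining ideal. One must use immediacy to replace $\gamma$ -- up to a unit of $W_i$ and a summand from $V$ -- by an element of $V$ (equivalently, to reconstrue $w_k$ through a pseudo convergent sequence in the relevant subfield), so that the new relation becomes a nonzerodivisor modulo the earlier ones, and one must organize the tower so that each new minimal polynomial is computed over one of the intermediate \emph{valuation} rings $W_i$, where Gauss's lemma makes the relevant relation ideal principal, before spreading it out over $V$. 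In characteristic $p>0$ this is precisely where the defect of $W/V$ enters: a degree--$p$ step $K_i/K_{i-1}$ is either purely inseparable ($z_i^p\in K_{i-1}$, so $f_i=X^p-a$ and $C_i=C_{i-1}[X]/(X^p-a)$ causes no trouble) or of Artin--Schreier type with defect, and checking that the accumulated relations always form a regular sequence is, I expect, the technical heart of the argument.
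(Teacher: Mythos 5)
First, note that this paper does not actually prove Theorem \ref{T2}: it is quoted from \cite{P2}, and what the present paper develops is the parallel machinery for the pure transcendental case (Lemmas \ref{o}--\ref{ka2}, Propositions \ref{p}, \ref{p'}, Corollaries \ref{co}, \ref{co'}, Lemmas \ref{1}, \ref{k}, Theorem \ref{key}). Measured against that machinery, your reductions are fine and essentially standard: passing to finite subextensions $W_L=V'\cap L$, the directedness argument, the Gauss-lemma computation showing $V[x']\iso V[X]/(f)$ for a single generator, and the adjunction $C_{i-1}[X]/(f_i)$ for a monic $f_i$ with coefficients already absorbed (the division argument for injectivity and the regular-sequence bookkeeping over a domain are correct). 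But the theorem is not yet proved by these steps, and you say so yourself: the point where a generator $w_k$ of the given algebra $D$ is \emph{not} integral over the relevant intermediate valuation ring, so that only a rescaled element $cw_k$ can be adjoined by a monic relation, is exactly the content of the theorem, and your proposal replaces it by the wish that ``immediacy lets one replace $\gamma$ by an element of $V$ so that the new relation becomes a nonzerodivisor modulo the earlier ones.'' That is precisely what has to be proved, and it is where all the work lies: the kernel of $C[X]\to V'$, $X\mapsto w_k$, is in general not generated by a regular sequence, and making it so requires enlarging $C$ by carefully chosen auxiliary elements $y_t=(y-v_t)/(v_{t+1}-v_t)$ coming from a pseudo convergent sequence, rewriting $g(y)$ as $c\,g_1(y_t)$ with a dominant linear term via Ostrowski-type value comparisons (the analogues here are Proposition \ref{p}, Corollary \ref{co}), and then running the dichotomy $d\mid c$ versus $c\mid d$ together with a Jacobian criterion as in Lemmas \ref{1} and \ref{k}. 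None of this is supplied or replaced by an alternative argument, so the proposal has a genuine gap at its core rather than being a different complete route.

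Two smaller points. Your parenthetical claim that in equal characteristic $0$ a finite immediate extension is an equality is false (any finite subextension of the Henselization of a non-Henselian valued field of residue characteristic $0$ is a nontrivial finite immediate extension); it is harmless to your structure, but it misidentifies where the difficulty sits --- the characteristic $0$ case of Theorem \ref{T2} follows from the nontrivial Theorem \ref{T0}, it is not trivial. Also, be careful not to conflate the field-theoretic minimal polynomial of $z_i$ over $K_{i-1}$ with the minimal polynomial attached by Kaplansky to an algebraic pseudo convergent sequence: the latter need not vanish at $z_i$ nor have degree $[K_i:K_{i-1}]$, and in the actual proofs it is the latter notion (via the behaviour of $\val(h(v_j))$) that drives the estimates.
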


A {\em complete intersection} $V$-algebra is a local $V$-algebra of type $C/(P)$, where $C$ is a localization of a polynomial $V$-algebra of finite type and $P$ is a regular sequence of elements of $C$.

One goal
of this paper is to show that Theorem \ref{T0} holds  in a special case of positive characteristic.

\begin{Theorem}\label{T3} Let  $V\subset V'$ be an immediate  extension of valuation rings   containing  a field and $K\subset K'$ its fraction field extension. If $K'=K(x)$ for some algebraically independent system of elements $x$ over $K$ 
then $V'$ is a filtered union of smooth $V$-subalgebras of $V'$. 
\end{Theorem}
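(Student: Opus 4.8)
The plan is to reduce to a simple transcendental extension and then to invoke the classification of immediate simple transcendental extensions by pseudo-convergent sequences. Since any finitely generated $V$-subalgebra of $V'$ has fraction field inside $K(x_i:i\in I_0)$ for a finite $I_0\subseteq I$, and since the smooth $V$-subalgebras of $V'$ form a directed family with union $V'$ as soon as every finitely generated $V$-subalgebra of $V'$ is contained in one of them (the join of two such is again finitely generated, hence again inside a smooth one), it suffices to treat $I=\{1,\dots,n\}$ finite and to prove that inclusion statement. Argue by induction on $n$; the case $n=0$ is trivial. For $n\geq 1$ set $L=K(x_1,\dots,x_{n-1})$ and $W=V'\cap L$; then $V\subset W$ is immediate with purely transcendental fraction field extension of degree $n-1$, so by induction $W=\bigcup_\alpha C_\alpha$ is a filtered union of smooth $V$-subalgebras, while $W\subset V'$ is immediate with $\Frac V'=L(x_n)$ simple transcendental over $L$. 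Granting that the simple transcendental case produces, inside $V'$, smooth $W$-subalgebras of the explicit form $W[s]_{\mathfrak q}$ with $s$ transcendental over $L$, one has $W[s]_{\mathfrak q}=\bigcup_\alpha (C_\alpha[s])_{\mathfrak q\cap C_\alpha[s]}$ because localization commutes with filtered colimits; each $C_\alpha[s]$ is a polynomial $C_\alpha$-algebra (as $s$ is transcendental over $\Frac C_\alpha\subseteq L$), hence smooth over $C_\alpha$ and over $V$, and its localization lies in $V'$. A finitely generated $D\subseteq V'$ then lands in some $W[s]_{\mathfrak q}$, hence in some $(C_\alpha[s])_{\mathfrak q\cap C_\alpha[s]}$, closing the induction. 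So everything comes down to the simple transcendental case, in the sharp form just used.

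Assume now $K'=K(t)$ with $t$ transcendental over $K$; replacing $t$ by $1/t$ if needed we take $t\in V'$, and after discarding an initial segment we arrange the valuations below to be nonnegative. Since $V\subset V'$ is immediate and $t\notin K$, there is a pseudo-convergent sequence $(a_\nu)_{\nu<\lambda}$ in $V$, of transcendental type and with no pseudo-limit in $K$, having $t$ as a pseudo-limit; write $\gamma_\nu=\val(t-a_\nu)=\val(a_{\nu'}-a_\nu)$ for $\nu<\nu'$, strictly increasing in $\Gamma$, and pick $c_\nu\in V$ with $\val(c_\nu)=\gamma_\nu$. Put $s_\nu=(t-a_\nu)/c_\nu\in V'$; then $\val(s_\nu)=0$, $s_\nu$ is transcendental over $K$, $t=c_\nu s_\nu+a_\nu\in V[s_\nu]$, and with $\mathfrak p_\nu=V[s_\nu]\cap\mathfrak m_{V'}$ the ring $A_\nu:=V[s_\nu]_{\mathfrak p_\nu}$ is a localization of a one-variable polynomial $V$-algebra, hence smooth over $V$, and is contained in $V'$ since $V[s_\nu]\setminus\mathfrak p_\nu$ consists of units of $V'$. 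From $s_\nu=(c_{\nu'}/c_\nu)s_{\nu'}+(a_{\nu'}-a_\nu)/c_\nu$ with both coefficients in $V$ one gets $A_\nu\subseteq A_{\nu'}$ for $\nu\leq\nu'$, so the $A_\nu$ form a chain of smooth $V$-subalgebras of $V'$.

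The crux is $\bigcup_\nu A_\nu=V'$. Given $b\in V'$, write $b=\mathcal F(t)/\mathcal G(t)$ with $\mathcal F,\mathcal G\in K[X]$ coprime; substituting $t=c_\nu Y+a_\nu$ gives $b=F_\nu(s_\nu)/G_\nu(s_\nu)$ where, by the Hasse--Schmidt expansion (valid in every characteristic), $F_\nu(Y)=\mathcal F(c_\nu Y+a_\nu)=\sum_i\mathcal F^{[i]}(a_\nu)c_\nu^iY^i$, and similarly for $G_\nu$. Its content is $\operatorname{cont}(F_\nu)=\min_i\bigl(\val\mathcal F^{[i]}(a_\nu)+i\gamma_\nu\bigr)$, and the ultrametric inequality gives $\val F_\nu(s_\nu)\geq\operatorname{cont}(F_\nu)$; on the other hand $F_\nu(s_\nu)=\mathcal F(t)$ and, by the standard lemma on pseudo-convergent sequences of transcendental type applied to $\mathcal F$ (and to each $\mathcal F^{[i]}$), $\val\mathcal F(t)$ equals the eventual value of $\val\mathcal F(a_\nu)$, which is the $i=0$ term of $\operatorname{cont}(F_\nu)$. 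If for some large $\nu$ one has $\val F_\nu(s_\nu)=\operatorname{cont}(F_\nu)$ and $\val G_\nu(s_\nu)=\operatorname{cont}(G_\nu)$, then picking $\rho\in K^*$ with $\val\rho=-\operatorname{cont}(G_\nu)$ one checks $\rho F_\nu,\rho G_\nu\in V[Y]$ (the inequality $\val b\geq 0$ forces $\operatorname{cont}(F_\nu)\geq\operatorname{cont}(G_\nu)$) and that $\rho G_\nu(s_\nu)$ is a unit of $V'$, hence of $A_\nu$, whence $b=\rho F_\nu(s_\nu)\cdot(\rho G_\nu(s_\nu))^{-1}\in A_\nu$.

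The main obstacle is precisely the equality $\val F_\nu(s_\nu)=\operatorname{cont}(F_\nu)$: it can fail, namely exactly when the residue $\overline{s_\nu}=\overline{(a_{\nu+1}-a_\nu)/c_\nu}$ is a root of the residual polynomial attached to $F_\nu$, i.e.\ when some term with $i\geq 1$ reaches the content and a cancellation of residues occurs. When $\{\val(t-a):a\in K\}$ is cofinal in $\Gamma$ this cannot happen for $\nu$ large (the terms with $i\geq 1$ then have strictly larger value than the term $i=0$); but when this set is bounded in $\Gamma$ — say $\gamma_\nu\nearrow\gamma^*\in\Gamma$ — the analysis is delicate, and one must either pass to a cleverer pseudo-convergent sequence approximating $t$, or enlarge $A_\nu$ by an auxiliary generator absorbing the cancellation while keeping the algebra smooth over $V$; handling this case is where the real work lies. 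Once exhaustion is secured in the explicit form above, the induction closes as described, and the whole argument is characteristic-free, so it covers the positive characteristic case left open by Theorem \ref{T0} and sharpens that theorem in the purely transcendental setting.
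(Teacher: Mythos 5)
Your reduction to the simple transcendental case is essentially the paper's (tower $V_i=V'\cap K(x_1,\dots,x_i)$ plus a limit argument), but the core of your argument has two genuine gaps. First, you assume that $t$ is a pseudo limit of a pseudo convergent sequence in $K$ \emph{of transcendental type}. Kaplansky's Theorem~1 only provides some pseudo convergent sequence without pseudo limit in $K$, and it can be of algebraic type even though $t$ is transcendental over $K$: for instance, if $u$ generates an immediate algebraic extension as a pseudo limit of an algebraic-type sequence whose breadth bounds $\{\val(u-a):a\in K\}$, and $\epsilon$ is transcendental over $K(u)$ with $\val(\epsilon)$ above that bound, then $t=u+\epsilon$ is transcendental, $K(t)/K$ is immediate, $\val(t-a)=\val(u-a)$ for all $a\in K$, and every sequence in $K$ pseudo converging to $t$ is of algebraic type. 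This is why Lemma~\ref{1} of the paper explicitly reduces to the case ``$v$ is algebraic'' and treats the transcendental-type case as the easy one; your use of the standard lemma ``$\val\mathcal F(t)=$ eventual $\val\mathcal F(a_\nu)$ for all $\mathcal F$'' is exactly what fails in the algebraic-type case.

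Second, even granting your setup, you yourself isolate the crux --- the possible failure of $\val F_\nu(s_\nu)=\operatorname{cont}(F_\nu)$ when $\{\val(t-a):a\in K\}$ is bounded --- and then leave it unresolved (``handling this case is where the real work lies''). That unresolved case is precisely what the paper's machinery is built for: the Ostrowski-type Lemmas \ref{o}--\ref{oe1}, Propositions \ref{p} and \ref{p'}, and Corollaries \ref{co}, \ref{co'} rearrange the polynomial along the sequence so that, after suitable changes of the indices, a single linear term dominates, with a special device (multiplying by $y$, $y_1$, $y_2$ or $y_1y_2$) to handle characteristic $p$, where monomials with exponents in $(p)$ have vanishing derivatives --- an obstruction your ``characteristic-free'' sketch never confronts. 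Moreover, the smooth subalgebras produced by Theorem~\ref{key} via Lemma~\ref{k} are localizations of rings $V[y_{0,t_0},\dots,y_{n,t_n}]$ that are \'etale over a one-variable polynomial ring, not localizations $W[s]_{\mathfrak q}$ of a single polynomial algebra as your induction step presumes; so even the way you close the induction would have to be reworked (e.g., by the standard descent of smooth finite-type algebras along filtered colimits) once the correct form of the subalgebras is used. As it stands, the proposal reproduces the easy part of the argument and names, but does not carry out, the part that constitutes the paper's actual proof.
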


The proof of Theorem \ref{T3} is given  using Theorem \ref{key}, which  is based on Propositions \ref{p}, \ref{p'}. A preliminary step of the hard proof of Theorem \ref{key} is given in Lemma \ref{1} using Corollary \ref{co} (and so Proposition \ref{p}).  A consequence of Theorem \ref{T3} is the next corollary, which is a form of   Zariski's Uniformization Theorem in a special case of
positive characteristic.
\begin{Corollary}\label{C} Let $V$ be a  valuation ring containing its residue field $k$   with a finitely generated value group $\Gamma$ and $K$ its fraction field. Assume that  $K=k(x,y)$ for some  algebraically independent elements $x=(x_i)_{i\in I},y=(y_j)_{j\in J}$ over $k$ such that $\val(y)$ is a basis in $\Gamma$. 
 Then $V$ is a filtered  union of its smooth $k$-subalgebras.
\end{Corollary}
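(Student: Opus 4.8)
The plan is to deduce the corollary from Theorem \ref{T3} by identifying $V$ with the valuation ring $V'$ in the theorem and constructing an appropriate subring $V\cap k(x)$ to play the role of the base ring. Concretely, set $K' = K = k(x,y)$ and let $K_0 = k(x)$. Since $\val(y)$ is a basis of $\Gamma$ and $x$ is algebraically independent over $k$ with (as I would first check) $\val(x_i) = 0$ — because every element of $k(x)$ has value in the subgroup generated by $\val(x)$, and if some $\val(x_i) \neq 0$ the value group would not be generated by $\val(y)$ alone — the restriction $W := V \cap K_0$ is a valuation ring of $K_0$ whose residue field is still $k$ (the images of the $x_i$ are algebraically independent over $k$ in the residue field, but they must equal the residue field $k$, forcing $W$ to have residue field $k$ and trivial value group, i.e. $W$ is a localization making $k(x)$ into... wait — I need $W$ to have residue field exactly $k$, which forces $k(x) \hookrightarrow k$, impossible unless $x$ is empty). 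Let me instead not insist the $x_i$ have value $0$: the correct statement is that $W = V\cap k(x)$ is some valuation ring of $k(x)$, and I must show $W \subset V$ is immediate and pure transcendental at the level of fraction fields.

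So the key steps are: (i) show $V\cap k(x)$ is a valuation ring of $k(x)$ with the same residue field $k$ and the same value group $\Gamma$ as $V$ — the residue field claim follows because $V$ contains $k$ and has residue field $k$, so every residue class is represented by an element of $k \subset k(x)$; the value group claim follows because $\Gamma$ is generated by $\val(y_j) \in \val(k(y)^*)$... but $y \notin k(x)$, so this is exactly the point where I must be careful. The resolution: $\Gamma$ is generated by $\val(y)$, so $\val(k(x)^*) \subseteq \Gamma$ is a subgroup, and I need it to be all of $\Gamma$ — but that need not hold. Better: I do NOT need $W \subset V$ to be immediate on value groups. Rather I want to present $V$ over a field-like base.

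Reconsidering: the cleanest route is to take the base ring to be $k$ itself viewed inside $V$ — but Theorem \ref{T3} needs an extension of valuation rings $V_0 \subset V$, not a field. So I would instead choose $V_0$ to be the valuation ring of $k(y)$ induced by $V$: since $\val(y)$ is a basis of $\Gamma$, the subfield $k(y)$ already carries the full value group, and $V_0 := V \cap k(y)$ has value group $\Gamma$ and residue field $k$ (every element of $V_0$ with value $0$ reduces into $k$, as the $y_j$ have values forming a basis so no nonzero combination is a unit with nontrivial residue — more precisely, $V_0$ is a valuation ring of $k(y)$ with value group $\Gamma$ of rank equal to the torsion-free rank, and its residue field is $k$ by the standard computation for monomial-type valuations on rational function fields). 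Then $V_0 \subset V$ is an immediate extension (same residue field $k$, same value group $\Gamma$) whose fraction field extension is $k(y) \subset k(x,y) = k(y)(x)$, which is purely transcendental in the elements $x$, algebraically independent over $k(y)$. Theorem \ref{T3} then gives that $V$ is a filtered union of smooth $V_0$-subalgebras. Finally, since $V_0$ is a localization of the polynomial ring $k[y]$ (localized at a prime, by the structure of the chosen valuation — here is where finite generation of $\Gamma$ and the basis condition are used, to guarantee $V_0$ is essentially of finite type, even smooth, over $k$), $V_0$ is itself a smooth $k$-algebra, and a smooth algebra over a smooth $k$-algebra is smooth over $k$; composing, $V$ is a filtered union of smooth $k$-subalgebras.

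The main obstacle I anticipate is establishing that $V_0 = V \cap k(y)$ is a \emph{smooth} $k$-algebra, i.e. essentially of finite type over $k$ with the right regularity. This requires understanding the valuation $\val$ restricted to $k(y)$: since $\val(y_1),\dots,\val(y_m)$ form a $\mathbb Z$-basis of the free abelian group $\Gamma \cong \mathbb Z^m$, the valuation on monomials $y^a$ is $a \mapsto \sum a_j \val(y_j)$, which is injective on $\mathbb Z^m$; a standard argument (as in the theory of Abhyankar valuations, where residual transcendence degree plus rational rank equals the transcendence degree) shows that such a valuation on $k(y)/k$ has residue field $k$ and that $V_0$ is a localization of a finitely generated $k$-algebra — in fact of $k[y]$ or of a monomial modification of it — hence regular, hence smooth over the perfect-enough (here just a field) base $k$. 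I would need to verify smoothness over $k$ without perfectness assumptions on $k$; this should follow because $V_0$ is a localization of a polynomial ring over $k$ (no quotient involved), and localizations of polynomial rings are always smooth. Once $V_0/k$ smooth is in hand, the transitivity of smoothness and of "filtered union of smooth subalgebras" finishes the proof. \QED
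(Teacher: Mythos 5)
Your main construction is exactly the paper's: take $V_0=V\cap k(y)$, observe that $V_0\subset V$ is immediate (the residue field of $V_0$ is squeezed between $k$ and $V/\mm=k$, and $\val(k(y)^*)\supseteq\langle\val(y_1),\dots,\val(y_m)\rangle=\Gamma$), note that the fraction field extension $k(y)\subset k(x,y)=k(y)(x)$ is purely transcendental, and apply Theorem \ref{T3} to get $V$ as a filtered union of smooth $V_0$-subalgebras. Up to that point you agree with the paper, which phrases the choice of $V_0$ via the cross-section $s:\Gamma\to K^*$, $\val(y_j)\mapsto y_j$, available because $\Gamma$ is free with basis $\val(y)$.

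The gap is in your last step: the claim that $V_0$ is a localization of $k[y]$ at a prime, hence essentially of finite type and smooth over $k$, is false whenever $m\geq 2$. A valuation ring that is essentially of finite type over a field is Noetherian, and a Noetherian valuation ring is a field or a DVR, i.e.\ has value group $0$ or $\ZZ$; but $V_0$ has value group $\Gamma\cong\ZZ^m$. Concretely, for the rank-two monomial valuation on $k(y_1,y_2)$ with $\val(y_1)=(1,0)$, $\val(y_2)=(0,1)$ in $\ZZ^2$ ordered lexicographically, $V_0$ is not Noetherian and is not any localization of $k[y_1,y_2]$; the same failure occurs for rank-one non-discrete cases such as $\val(y_1)=1$, $\val(y_2)=\sqrt2$. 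What is true, and what the paper invokes at this point (via \cite[Lemma 27 (1)]{P}, resting on \cite[Theorem 1, VI (10.3)]{Bou} or \cite[Lemma 26 (1)]{P}), is the weaker but sufficient statement that $V_0$ is a \emph{filtered union} of localizations of polynomial $k$-subalgebras of $k(y)$ (obtained by monomial transformations, using the cross-section); these are smooth $k$-subalgebras even though $V_0$ itself is not. With that replacement your argument closes: each smooth $V_0$-subalgebra of $V$ produced by Theorem \ref{T3} is essentially of finite presentation over $V_0$, hence descends to a smooth algebra over one of these polynomial-localization stages, and combining the two filtered unions exhibits $V$ as a filtered union of smooth $k$-subalgebras. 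So the deduction is salvageable, but as written the smoothness of $V_0$ over $k$ is a genuine error, not a routine verification.
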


Another goal is
 to state  the following theorem extending Theorem \ref{T2}.

\begin{Theorem}\label{T4} Let $V\subset V'$ be an  immediate extension of valuation rings containing a field.   Then $V'$ is a filtered
 union of its complete intersection $V$-subalgebras of finite type.
\end{Theorem}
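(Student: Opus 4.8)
The plan is to reduce Theorem \ref{T4} to the already-established results: Theorem \ref{T3} for the pure transcendental part and Theorem \ref{T2} for the algebraic part, with a devissage through an intermediate valuation ring. Given an immediate extension $V\subset V'$ with fraction fields $K\subset K'$, choose a transcendence basis $x$ of $K'/K$ and let $K''=K(x)$, so that $K\subset K''\subset K'$ with $K''/K$ pure transcendental and $K'/K''$ algebraic. Set $V''=V'\cap K''$; this is a valuation ring of $K''$ dominating $V$, and both $V\subset V''$ and $V''\subset V'$ are immediate extensions (an extension of valuation rings is immediate iff the composite with any intermediate valuation ring has trivial residue and value-group contributions, and these vanish for the composite by hypothesis, hence for each factor since value groups and residue fields only grow). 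Theorem \ref{T3} applies to $V\subset V''$ and Theorem \ref{T2} applies to $V''\subset V'$.

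The first step is then to write $V''=\varinjlim_{i} B_i$ as a filtered union of smooth $V$-subalgebras $B_i$, and $V'=\varinjlim_{j} C_j$ as a filtered union of complete intersection $V''$-subalgebras $C_j$ of finite type, where each $C_j$ is local of the form $D_j/(P_j)$ with $D_j$ a localization of a finite-type polynomial ring over $V''$ and $P_j$ a regular sequence. The second step is to descend the base of each $C_j$ from $V''$ to one of the finitely generated smooth $V$-subalgebras $B_i$: since $C_j$ is of finite type over $V''$ and $V''$ is the filtered union of the $B_i$, all the finitely many coefficients defining $D_j$ and $P_j$ lie in some $B_i$, so $C_j$ is the base change to $V''$ of a complete intersection $B_i$-algebra $C_j^{(i)}=D_j^{(i)}/(P_j^{(i)})$ of finite type, after possibly enlarging $i$ to ensure $P_j^{(i)}$ remains a regular sequence (which holds after a further localization, since flatness and the regular-sequence condition are open and $V''$ is a filtered colimit of the $B_i$). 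The third step is to observe that $C_j^{(i)}$, being a complete intersection over the smooth $V$-algebra $B_i$, is itself a complete intersection $V$-algebra of finite type: a localization of a finite-type polynomial ring over $B_i$ is a localization of a finite-type polynomial ring over $V$ (because $B_i$ is essentially of finite type over $V$, indeed smooth hence finitely presented up to localization), and a regular sequence stays a regular sequence. Then $V'$ is the filtered union of these $C_j^{(i)}\subset V'$, which is what we want.

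The main obstacle I expect is the bookkeeping in the devissage: one must be careful that "complete intersection $V$-subalgebra of finite type" is genuinely stable under the operation "complete intersection over a smooth finite-type $V$-algebra," and in particular that the smooth algebra $B_i$ — which a priori is only a \emph{localization} of a finitely generated smooth $V$-algebra in the sense of Theorem \ref{T3}'s conclusion — can be absorbed into the localization datum of $D_j$. Concretely, if $B_i=(V[t]/\mathfrak{a})_{\mathfrak{q}}$ is smooth over $V$ and $D_j^{(i)}=(B_i[s])_{\mathfrak{p}}$, then $D_j^{(i)}$ is a localization of $V[t,s]/\mathfrak{a}V[t,s]$; for $C_j^{(i)}=D_j^{(i)}/(P_j^{(i)})$ to be a complete intersection $V$-algebra in the precise sense of the definition recalled after Theorem \ref{T2}, one needs a presentation $C/(P)$ with $C$ a localization of a polynomial $V$-algebra and $P$ a \emph{regular} sequence; here the defining relations are $\mathfrak{a}V[t,s]$ together with $P_j^{(i)}$, and one must check these together form (after localization) a regular sequence — which follows because $B_i$ is a local complete intersection over $V$ (being smooth, the ideal $\mathfrak{a}$ is locally generated by a regular sequence whose images form part of a regular system of parameters in the relevant fibers) and regular sequences concatenate under the flatness of $C_j^{(i)}$ over $B_i$. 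A second, more minor point is checking that $V''=V'\cap K''$ makes $V''\subset V'$ immediate; this is the standard additivity of ramification and residue degrees along a tower of valuation rings, together with the hypothesis that the total extension is immediate, forcing each intermediate contribution to be trivial. Once these checkpoints are secured, the theorem follows by combining Theorems \ref{T3} and \ref{T2} as above.
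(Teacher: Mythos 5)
Your proposal is exactly the route the paper takes: the paper proves Theorem \ref{T4} simply by invoking Theorems \ref{T2} and \ref{T3} through the intermediate valuation ring $V''=V'\cap K(x)$ for a transcendence basis $x$, which is immediate over $V$ and under $V'$ just as you argue. Your write-up in fact supplies more detail (the descent of the complete intersection data to a smooth finite stage and the composition of a complete intersection over an essentially smooth $V$-algebra) than the paper itself records, and these checkpoints are the standard filtered-colimit bookkeeping the paper leaves implicit.
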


\begin{Corollary}\label{C1} Let $V$ be a  valuation ring containing its residue field $k$   with a free value group $\Gamma$, for example if $\Gamma$ is finitely generated.  
 Then $V$ is a filtered  union of its complete intersection $k$-subalgebras of finite type.
\end{Corollary}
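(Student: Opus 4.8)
The plan is to deduce Corollary \ref{C1} from Theorem \ref{T4} by exhibiting $V$ itself as an immediate extension of a suitable valuation subring, much as Corollary \ref{C} is deduced from Theorem \ref{T3}. First I would choose, inside the fraction field $K$ of $V$, a transcendence basis that is adapted to the value group: since $\Gamma$ is finitely generated, pick elements $y=(y_j)_{1\le j\le m}$ of $K$ whose values $\val(y_j)$ generate $\Gamma$ (after clearing denominators and reindexing we may take the $y_j$ in $V$ with positive values), and extend $\{y_j\}$ to a transcendence basis of $K$ over $k$ by algebraically independent elements $x=(x_i)_{1\le i\le n}$ with $\val(x_i)=0$; here $k$ denotes the residue field, which by hypothesis is contained in $V$. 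The subfield $K_0=k(x,y)$ carries the restricted valuation, with valuation ring $V_0=V\cap K_0$, and by construction the value group of $V_0$ is already all of $\Gamma$ and its residue field is $k$ (the $x_i$ reduce to a transcendence basis of the residue field, and since $V\supset k$ the residue field of $V_0$ equals $k$).

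Next I would observe that $V_0\subset V$ is an immediate extension: the value groups coincide (both equal $\Gamma$) and the residue fields coincide (both equal $k$, since $k\subset V_0\subset V$ and $V$ has residue field $k$). Both rings contain the field $k$. Hence Theorem \ref{T4} applies to the extension $V_0\subset V$ and gives that $V$ is a filtered union of its complete intersection $V_0$-subalgebras of finite type. Finally, since $V_0$ is itself a localization of the polynomial ring $k[x,y]$ — indeed $V_0=k[x,y]_{\mathfrak{p}}$-type localized further, but in any case $V_0$ is essentially of finite type over $k$, being a valuation ring of the rational function field $k(x,y)$ whose value group $\Gamma$ is finitely generated — a complete intersection $V_0$-subalgebra of finite type is a complete intersection $k$-subalgebra of finite type. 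Composing, $V$ is a filtered union of complete intersection $k$-subalgebras of finite type.

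The main obstacle is the claim that $V_0$ is essentially of finite type over $k$, equivalently that a complete intersection $V_0$-algebra of finite type is a complete intersection $k$-algebra of finite type. This is where the hypothesis that $\Gamma$ is finitely generated is essential: a valuation ring of $k(x,y)$ with finitely generated value group and residue field $k$ is a directed union of subrings essentially of finite type over $k$, and one must check that $V_0$ can be taken to lie in (or be replaced by) one such subring without disturbing the immediacy of $V_0\subset V$. Concretely, I would replace $V_0$ by an Abhyankar-type valuation subring: enlarge the chosen $y_j$ if necessary so that $V_0$ becomes the localization at a prime of a finitely generated $k$-algebra having $\Gamma$ as value group, using that finitely generated abelian groups admit such realizations. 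Once $V_0$ is essentially of finite type over $k$, the composition of "complete intersection and finite type over $V_0$" with "$V_0$ essentially of finite type over $k$, smooth hence complete intersection over $k$ after localization" yields the desired conclusion; the filtered-union structure is preserved because a filtered union of finite-type subalgebras of $V$ over $V_0$ is a filtered union of finite-type subalgebras over $k$.
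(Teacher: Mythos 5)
Your overall strategy — produce a valuation subring $V_0\subset V$ with the same residue field $k$ and the same value group $\Gamma$, apply Theorem \ref{T4} to the immediate extension $V_0\subset V$, and then descend from $V_0$ to $k$ — is exactly the paper's intended route (the paper says the proof of Corollary \ref{C1} goes as that of Corollary \ref{C}, with Theorem \ref{T4} in place of Theorem \ref{T3}). But the step you yourself flag as the main obstacle is a genuine gap, and your proposed repair does not close it. The claim that $V_0=V\cap k(x,y)$ is a localization of $k[x,y]$, or even essentially of finite type over $k$, is false in general: a valuation ring essentially of finite type over $k$ is Noetherian, hence a field or a DVR, whereas $\Gamma$ may have rank $\geq 2$; and even when $\Gamma$ has rank $\leq 1$, a valuation ring of $k(x,y)$ with residue field $k$ is typically only a directed union of essentially finite type subrings, not one of them. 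For the same Noetherianity reason the suggested fix (``enlarge the $y_j$ so that $V_0$ becomes a localization of a finitely generated $k$-algebra with value group $\Gamma$'') cannot work, and replacing $V_0$ by a smaller subring ``without disturbing immediacy'' is not justified. Worse, by adjoining a full transcendence basis $x$ you have made the descent step circular: knowing that finite-type complete intersection $V_0$-subalgebras of $V$ are filtered unions of complete intersection $k$-subalgebras is essentially Corollary \ref{C1} applied to $V_0$ itself, since with the $x_i$ present $V_0$ is no longer of Abhyankar type and nothing forces it to be a union of nice $k$-subalgebras.

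The correct construction, as in the paper's proof of Corollary \ref{C}, is not to adjoin the $x_i$ at all. Since $\Gamma$ is finitely generated and torsion free (being ordered), it is free, so there is a cross-section $s:\Gamma\to K^{*}$ of $\val$; set $V_0=V\cap k(s(\Gamma))$. Then $k\subset V_0\subset V$, the residue field of $V_0$ is $k$ and its value group is $\Gamma$, so $V_0\subset V$ is immediate (\cite[Lemma 27 (1)]{P}); and, crucially, $V_0$ is the monomial (Abhyankar) valuation ring of $k(s(\Gamma))$, which by \cite[Theorem 1, VI (10.3)]{Bou} (or \cite[Lemma 26 (1)]{P}) is a filtered union of localizations of polynomial $k$-subalgebras. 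That weaker statement — filtered union, not essentially of finite type — is all one needs: a complete intersection $V_0$-subalgebra of $V$ of finite type has its finitely many defining data over one of these localizations of polynomial $k$-algebras, hence is a complete intersection $k$-subalgebra of finite type. Applying Theorem \ref{T4} to $V_0\subset V$ (both contain the field $k$, and no hypothesis on the fraction field extension is needed, which is why Theorem \ref{T4} rather than Theorem \ref{T3} is invoked) and composing gives the corollary.
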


Theorem \ref{T4} follows from Theorems \ref{T2}, \ref{T3} and the proof of  Corollary \ref{C1} goes as the proof of Corollary \ref{C} using Theorem \ref{T4} instead of Theorem \ref{T3}. In positive characteristic, Corollary \ref{C1} is  a form of   Zariski's Uniformization Theorem in  the frame of complete intersection  algebras.

We owe thanks to some anonymous referees who hinted  some mistakes in some  previous forms of Lemmas \ref{o2}, \ref{ka0}, the proofs of Proposition \ref{p}, Theorem \ref{key} and  had some useful comments on Lemmas \ref{ka1}, \ref{ka2} and Proposition \ref{p'}.

\vskip 0.3 cm

 \section{Some extensions of an Ostrowski's lemma}

Let $V$ be a valuation ring, $\Gamma$ its value group, $\val$ its valuation, $\lambda$  a fixed limit ordinal  and $v=\{v_i \}_{1\leq i < \lambda}$ a sequence of elements in $V$ indexed by the ordinals $i$ less than  $\lambda$. Then $v$ is \emph{pseudo convergent} if 

$\val(v_{i} - v_{i''} ) < \val(v_{i'} - v_{i''} )     \ \ \mbox{for} \ \ 1\leq i < i' < i'' < \lambda$
(see \cite{Kap}, \cite{Sch}).
A  \emph{pseudo limit} of $v$  is an element $w \in V$ with 

$ \val(w - v_{i}) < \val(w - v_{i'}) \ \ \mbox{(that is,} \ \ \val(w -  v_{i}) = \val(v_{i} - v_{i'}) \ \ \mbox{for} \ \ 1\leq  i < i' < \lambda$.
When for any $\gamma\in \Gamma$ it holds $\val(v_{i} - v_{i'})>\gamma$ for  $ i < i' < \lambda$
large, that is for $i'>i> \nu$ for some $\nu<\lambda$ we call $v$ {\em fundamental}. In this case a pseudo limit of $v$ is called {\emph limit}.

 We say that $v$  is 
\begin{enumerate}
\item
\emph{algebraic} if some $f \in V[T]$ satisfies $\val(f(v_{i})) < \val(f(v_{i'}))$ for large enough $1\leq i < i' < \lambda$;

\item
\emph{transcendental} if each $f \in V[T]$ satisfies $\val(f(v_{i})) = \val(f(v_{i'}))$ for large enough $1\leq i < i' < \lambda$.
\end{enumerate}

 The following  lemma is a variant  of  Ostrowski's lemma (\cite[ page 371, IV and III]{O}, see especially \cite[(II,4), Lemma 8]{Sch}).

\begin{Lemma}(Ostrowski) \label{o} Let $\beta_1,\ldots,\beta_m$, $m>1$ be any elements of an ordered abelian group $G$, $\lambda$ a limit ordinal and
 let $\{\gamma_s\}_{1\leq s<\lambda}$ be an increasing sequence of elements of G. Let $ t_1,\ldots, t_m$, be distinct integers.
 Then the following statements hold
 
 \begin{enumerate}
 
\item  
There exist an ordinal $1\leq \nu<\lambda$ and an integer $1\leq r\leq m$ such that
$$\beta_i+t_i\gamma_s>\beta_r+t_r\gamma_s$$ 
for all $i\not = r$ and $s>\nu$. 
 
\item There  exists
an ordinal $1\leq \nu<\lambda$ such that $(\beta_i+t_i\gamma_s)_{1\leq i\leq m}$ are different for all $s>\nu$. 

\item If the  integers   $ t_1,\ldots, t_m$, are not  distinct but
 $\beta_i\not =\beta_j$, $i\not =j$ whenever $t_i=t_j$ then  there  exists
an ordinal $1\leq \nu<\lambda$ such that $(\beta_i+t_i\gamma_s)_{1\leq i\leq m}$ are different for all $s>\nu$.
\end{enumerate}
\end{Lemma}

\begin{proof} The proofs from  the quoted papers gives the first statement even when the integers  $t_i$ are not necessarily positive.  The second statement follows
 applying   iteratively the first one.
 
 We need only (2), (3) in our paper and so we give a  proof only for  these two. Assume that $t_1<\ldots <t_m$.  If $m=2$ then there exists $1\leq \nu<\lambda$ such that $(t_2-t_1)\gamma_s\not =\beta_1-\beta_2$  for all $s>\nu$, which ends the case $m=2$. Apply induction on $m$. Let $m>2$.  By induction hypothesis there exists $1\leq\nu_1<\lambda$ such that $(\beta_i+t_i\gamma_s)_{1\leq i< m}$ are different for all $s>\nu_1$.  On the other hand, there exists $1\leq \nu_2<\lambda$ such that  $(t_m-t_i)\gamma_s\not =\beta_i-\beta_m$ for all $s>\nu_2$ and $1\leq i<m$. Then $\nu=\max\{\nu_1,\nu_2\}$ works for (2).

For (3) apply induction on $m$. Let $m>2$, the case $m=2$ being trivial.
 Assume that if $t_i=t_j $ for some $i\not =j$ then $\beta_i\not =\beta_j$. By induction hypothesis we  assume that there exists $1\leq \nu_1<\lambda$ such that $(\beta_e+t_e\gamma_s)_{1\leq e<m}$ are different, if $s>\nu_1$. On the other hand, we see that there exists $1\leq \nu_2<\lambda$ such that $\beta_m+t_m\gamma_s\not =\beta_j+t_j\gamma_s $ for all $j\not = m$ and $s>\nu_2$ because  $\beta_m\not =\beta_j$ if $t_m=t_j$ and so $(\beta_e+t_e\gamma_s)_{1\leq e\leq m}$ are different for  $s>\nu=\max \{\nu_1,\nu_2\}$.
\hfill\  \end{proof}

Next we give some small  extensions of  Lemma \ref{o} for the case of two  increasing sequences. 

\begin{Lemma} \label{o1} Let $\beta_0,\beta_1$, $c$ be some  elements of an ordered abelian group $G$, and let $(\gamma_{0,j_0})_{1\leq j_0<\lambda}$  be an increasing sequence of elements $\geq 0$ of $G$ and let  $(\gamma_{1,j_0})_{1\leq j_0<\lambda}$ be defined by $\gamma_{1,j_0}=\gamma_{0,j_0}$+c for $1\leq j_0<\lambda$. 
 Then there  exist a subset $A\subset [1,\lambda)$ and a map $\sigma:A\to [1,\lambda)$ such that  
the elements 

\begin{enumerate}
\item
$$P_{0,j_0}=\beta_0+\gamma_{0,j_0},$$
  
  \item 
  $$P_{1,j_1}=\beta_1+\gamma_{1,j_1}$$
   \end{enumerate}
  are different  for all  $1\leq j_0, j_1<\lambda$ with $j_1\not =\sigma(j_0)$ if $j_0\in A$. 
\end{Lemma}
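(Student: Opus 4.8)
The plan is to reduce everything to the single observation that any equality $P_{0,j_0}=P_{1,j_1}$ is governed by one fixed element of $G$. First I would record that, since $(\gamma_{0,j_0})_{j_0<\lambda}$ is a strictly monotone increasing family, the assignment $j\mapsto\gamma_{0,j}$ is injective; hence so are $j_0\mapsto P_{0,j_0}=\beta_0+\gamma_{0,j_0}$ and $j_1\mapsto P_{1,j_1}=\beta_1+c+\gamma_{0,j_1}$. Consequently the $P_{0,j_0}$ are pairwise distinct, and likewise the $P_{1,j_1}$ are pairwise distinct; the only coincidences that can possibly occur are of the ``mixed'' type $P_{0,j_0}=P_{1,j_1}$.

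Next I would set $d:=\beta_0-\beta_1-c\in G$ and note that $P_{0,j_0}=P_{1,j_1}$ holds if and only if $\gamma_{0,j_1}=\gamma_{0,j_0}+d$. By the injectivity of $j\mapsto\gamma_{0,j}$, for each fixed $j_0<\lambda$ there is at most one $j_1<\lambda$ with this property. I would therefore define $A$ to be the set of those $j_0<\lambda$ for which $\gamma_{0,j_0}+d$ belongs to $\{\gamma_{0,j}:j<\lambda\}$, and define $\sigma\colon A\to[1,\lambda)$ by letting $\sigma(j_0)$ be that unique index $j_1$. Then, if $j_0\notin A$ we get $P_{0,j_0}\neq P_{1,j_1}$ for every $j_1<\lambda$, while if $j_0\in A$ the equality $P_{0,j_0}=P_{1,j_1}$ forces $j_1=\sigma(j_0)$; together with the within-family distinctness noted above, this is precisely the asserted conclusion.

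I do not expect a genuine obstacle here: the statement is essentially formal bookkeeping, and the only point that needs a little care is checking that $\sigma$ is well defined as a single-valued map, which is exactly where strict monotonicity of $(\gamma_{0,j})_{j<\lambda}$ enters. Its real purpose seems to be to package, for use in the subsequent lemmas, the fact that the two ``parallel'' families $\gamma_{0,j}$ and $\gamma_{0,j}+c$ can overlap only along a partial matching; if convenient later, one may additionally remark that $\sigma$ is automatically order-preserving, since $\gamma_{0,\sigma(j_0)}=\gamma_{0,j_0}+d$ is strictly increasing in $j_0$ on $A$.
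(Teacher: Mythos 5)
Your proof is correct and follows essentially the same route as the paper's: define $A$ as the set of indices $j_0$ admitting a matching $j_1$ with $P_{0,j_0}=P_{1,j_1}$, and use the (strict) monotonicity, hence injectivity, of $j\mapsto\gamma_{0,j}$ to see that this $j_1$ is unique, which makes $\sigma$ well defined. Your reformulation via $d=\beta_0-\beta_1-c$ is just a cleaner way of writing the paper's equation $\gamma_{0,j_0'}-\gamma_{0,j_0}=\gamma_{0,j_1'}-\gamma_{0,j_1}$, so there is no substantive difference.
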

\begin{proof}  If  $P_{0,j_0}\not =P_{1,j_1}$ for all $1\leq j_0,j_1<\lambda $ then take $A=\emptyset$ and there exists nothing to show. Otherwise, let $A$ be the set of all $1\leq j_0<\lambda$ such that there exists $1\leq j_1<\lambda$ with   $P_{0,j_0}=P_{1,j_1}$ (we may have $A=[1,\lambda)$). Let $j_0\in A$ and $j_1$ be such that
 $P_{0,j_0}=P_{1,j_1}$.   If $P_{0,j_0'}=P_{1,j_1'}$ for  some $(j_0',j_1')\not =(j_0,j_1)$ then we get 
$\gamma_{0,j_0'}-\gamma_{0,j_0}=\gamma_{1,j_1'}-\gamma_{1,j_1}=\gamma_{0,j_1'}-\gamma_{0,j_1}$. Clearly, for fix $j_0'$  there exists  at most one such  $j_1'$ because  
 $(\gamma_{0,j_0})_{1\leq j_0<\lambda}$ is increasing. Thus we can define $\sigma(j_0)=j_1$, $\sigma(j_0')=j_1'$. Thus $A,\sigma$ works.
\hfill\ \end{proof} 

\begin{Lemma} \label{o2} Let $\beta_0,\beta_1,\beta_{0,1}$ be some  elements of an ordered abelian group $G$, and let  $(\gamma_{0,j_0})_{1\leq j_0<\lambda_0}$, $(\gamma_{1,j_1})_{1\leq j_1<\lambda_1}$  be two  increasing sequences of elements $\geq 0$ of $G$. 
 Then there  exist two ordinals $1\leq \rho_0<\lambda_0$, $1\leq \rho_1<\lambda_1$, a subset $A\subset [1,\lambda_0)$ and a map $\sigma:A\to [1,\lambda_1)$ such that  
the elements 

\begin{enumerate}
\item
$$P_{0,j_0}=\beta_0+\gamma_{0,j_0},$$
  
  \item 
  $$P_{1,j_1}=\beta_1+\gamma_{1,j_1}$$
  
  \item
  $$P_{0,1,j_0,j_1}=\beta_{0,1}+\gamma_{0,j_0}+\gamma_{1,j_1}$$
   \end{enumerate}
  are different  for all  $\rho_0<j_0<\lambda_0$, $\rho_1<j_1<\lambda_1$ with $j_1\not =\sigma(j_0)$ if $j_0\in A$. In particular, for all  two ordinals $1\leq \nu_0<\lambda_0$, $1\leq \nu_1<\lambda_1$, there exist  $\nu_0<j_0<\lambda_0$, $ \nu_1<j_1<\lambda_1$ such that $P_{0,j_0}$, $P_{1,j_1}$, $P_{0,1,j_0,j_1}$  are different.
\end{Lemma}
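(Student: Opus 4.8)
The plan is to reduce the three-family problem to pairwise comparisons and then handle each pair either by a direct application of Ostrowski (Lemma \ref{o}) or by the explicit bookkeeping already carried out in Lemma \ref{o1}. There are three types of collisions to rule out: (a) $P_{0,j_0}=P_{0,j_0'}$ and $P_{1,j_1}=P_{1,j_1'}$ within a single family; (b) $P_{0,j_0}=P_{1,j_1}$ across the two single-index families; (c) collisions involving the double-index family $P_{0,1,j_0,j_1}$, either with a member of one of the single families or with another member of its own family.

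First I would dispose of (a): since $(\gamma_{0,j_0})$ is strictly monotone increasing, $P_{0,j_0}=\beta_0+\gamma_{0,j_0}$ is strictly monotone in $j_0$, hence injective; likewise for the $P_{1,j_1}$. No truncation is needed here. For (b), fix $j_1$ for a moment: $P_{0,j_0}=P_{1,j_1}$ reads $\gamma_{0,j_0}=\beta_1-\beta_0+\gamma_{1,j_1}$, and by monotonicity of $\gamma_0$ there is at most one $j_0$ for each $j_1$ (and vice versa). So the set of colliding pairs is the graph of a partial injection; this is exactly the $A$, $\sigma$ produced in Lemma \ref{o1}, and I would invoke that lemma essentially verbatim (the only difference from Lemma \ref{o1} is that there we had $\gamma_{1}=\gamma_0+c$ indexed by the same $\lambda$, but the argument only used strict monotonicity, so it transfers).

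The substantive part is (c). For a collision $P_{0,1,j_0,j_1}=P_{0,j_0'}$ we get $\beta_{0,1}-\beta_0+\gamma_{1,j_1}=\gamma_{0,j_0'}-\gamma_{0,j_0}$; here is where I expect the main obstacle. The idea is to first choose $\rho_1<\lambda_1$ large enough that $\gamma_{1,j_1}$ is so large, relative to the fixed elements $\beta_0,\beta_1,\beta_{0,1}$, that such an equation forces $j_0'>j_0$ with $\gamma_{0,j_0'}$ itself large; then a second truncation $\rho_0<\lambda_0$ makes the ``gap'' $\gamma_{0,j_0'}-\gamma_{0,j_0}$ controllable. Concretely, I would argue by contradiction: if for arbitrarily large $j_0,j_1$ we had $P_{0,1,j_0,j_1}=P_{0,j_0'}$, then subtracting two such relations with the same $j_0$ but different $j_1<j_1'$ gives $\gamma_{1,j_1'}-\gamma_{1,j_1}=\gamma_{0,j_0''}-\gamma_{0,j_0'}$, and then apply Ostrowski's Lemma \ref{o} to the finitely many ``slopes'' $\pm 1, 0$ attached to the increasing set $\{\gamma_{1,j_1}\}$ to see that the left side eventually dominates, a contradiction once $\gamma_{0,j_0'}$ is pinned by the earlier truncation. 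The collision $P_{0,1,j_0,j_1}=P_{1,j_1'}$ is symmetric. Finally, internal collisions $P_{0,1,j_0,j_1}=P_{0,1,j_0',j_1'}$ give $\gamma_{0,j_0}+\gamma_{1,j_1}=\gamma_{0,j_0'}+\gamma_{1,j_1'}$; assuming $j_0\neq j_0'$, say $j_0<j_0'$, monotonicity forces $j_1>j_1'$, and then one more application of Lemma \ref{o} to $\{\gamma_{0,j}\}$ with slopes $0,\pm1$ shows that for $j_0$ past some ordinal the two sides cannot agree unless $(j_0,j_1)=(j_0',j_1')$; this contributes to the choice of $\rho_0$. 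Taking $\rho_0,\rho_1$ to be the maxima of the finitely many ordinals produced above, and $A,\sigma$ from step (b), gives the claim; the ``in particular'' clause is immediate since $A$ meets each tail in at most the graph of $\sigma$, so for any $\nu_0,\nu_1$ one can pick $j_0>\max(\nu_0,\rho_0)$ in $A$ and then $j_1>\max(\nu_1,\rho_1,\sigma(j_0))$, or $j_0\notin A$ and any large $j_1$.
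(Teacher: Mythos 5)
Your steps (a) and (b) are fine and coincide with the paper's argument: the only $P_0$--$P_1$ collisions form the graph of a partial injection, which is exactly the data $A,\sigma$ (the paper constructs it directly rather than citing Lemma \ref{o1}, but that is the same thing). The problem is step (c), where you have read the lemma as asserting distinctness of the whole family over all admissible indices, and you try to exclude collisions such as $P_{0,1,j_0,j_1}=P_{0,j_0'}$ with $j_0'\neq j_0$ and internal collisions $P_{0,1,j_0,j_1}=P_{0,1,j_0',j_1'}$. That stronger statement is false, and no argument can close this gap: take $G={\bf Z}$, $\gamma_{0,j}=\gamma_{1,j}=j$ for $j<\omega$ and $\beta_0=\beta_1=\beta_{0,1}=0$; then $P_{0,1,j_0,j_1}=P_{0,j_0+j_1}$ and $P_{0,1,j_0,j_1+1}=P_{0,1,j_0+1,j_1}$ for arbitrarily large indices, and exclusion data of the shape $(\rho_0,\rho_1,A,\sigma)$ cannot remove these, since $\sigma$ only forbids one $j_1$ per $j_0$ and a third index $j_0'$ is involved. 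Concretely, your contradiction via Ostrowski fails here: the subtracted relation $\gamma_{1,j_1'}-\gamma_{1,j_1}=\gamma_{0,j_0''}-\gamma_{0,j_0'}$ is perfectly satisfiable, and Lemma \ref{o} concerns fixed translates $\beta_i+t_i\gamma_s$ of a \emph{single} increasing sequence, so it says nothing when both sides vary independently. A further unjustified point is your opening move ``choose $\rho_1$ so that $\gamma_{1,j_1}$ is large relative to $\beta_0,\beta_1,\beta_{0,1}$'': in a general ordered abelian group an increasing set without last element need not eventually exceed a fixed element (it may be bounded above), so ``largeness'' cannot be invoked.

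What the lemma actually asserts — as its use in Lemma \ref{ka0} and the ``in particular'' clause make clear — is only that for each single admissible pair $(j_0,j_1)$ the three elements $P_{0,j_0}$, $P_{1,j_1}$, $P_{0,1,j_0,j_1}$ attached to that pair are pairwise distinct. With this reading the mixed collisions are trivial to exclude: $P_{0,j_0}=P_{0,1,j_0,j_1}$ forces $\gamma_{1,j_1}=\beta_0-\beta_{0,1}$, and $P_{1,j_1}=P_{0,1,j_0,j_1}$ forces $\gamma_{0,j_0}=\beta_1-\beta_{0,1}$; a strictly increasing set takes a given value at most once, so choosing $\rho_1$ (resp.\ $\rho_0$) past that single index kills both, which together with your $A,\sigma$ is the paper's entire proof. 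Your derivation of the ``in particular'' clause from the main statement is fine, but you should replace step (c) by this one-line monotonicity observation and drop the cross-index and internal-collision claims, which are not part of the statement and are not true.
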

\begin{proof} As above, let $A$ be the set of all $1\leq j_0<\lambda_0$ such that there exists $1\leq j_1<\lambda_1$ with   $P_{0,j_0}=P_{1,j_1}$. Let $j_0\in A$ and $j_1$ be such that
 $P_{0,j_0}=P_{1,j_1}$.   If $P_{0,j_0'}=P_{1,j_1'}$ for  some $(j_0',j_1')\not =(j_0,j_1)$ then we get 
$\gamma_{0,j_0'}-\gamma_{0,j_0}=\gamma_{1,j_1'}-\gamma_{1,j_1}$. Clearly, for fix $j_0'$  there exists  at most one such  $j_1'$ because  
 $(\gamma_{1,j_1})_{j_1<\lambda_1}$ is  increasing. Define $\sigma(j_0)=j_1$, $\sigma(j_0')=j_1'$.

Now, assume that   $P_{0,j_0}=P_{0,1,j_0,j_1}$. Then $\gamma_{1,j_1}=\beta_0-\beta_{0,1}$. As $(\gamma_{1,j_1})$
is  increasing we see that $\gamma_{1,j_1'}\not=\beta_0-\beta_{0,1}$ for $j'_1>j_1$, so  for $j'_1$ large. In this way, we find $\nu_0,\nu_1$, which work together with
  $A,\sigma$.
\hfill\ \end{proof} 

As an application of Lemma \ref{o2} we get
 the following lemma.

\begin{Lemma} \label{ka0}
Let $V \subset V'$ be an  immediate extension of valuation rings, $K$, $K'$ the fraction fields of $V$, $V'$, $\mm, \mm'$ the maximal ideals of $V$, $V'$, $y_0\in V'$ and $y_1=y_0^2$. Assume that $K'=K(y_0)$ and  $y_e$, $0\leq e\leq 1$ is  a pseudo limit of a pseudo convergent sequence $v_e=(v_{e,j_e})_{1\leq j_e<\lambda_e}$ over $V$, which  has no pseudo limits in $K$.
 Set $ y_{e,j_e}=( y_e- v_{e,j_e})/( v_{e,j_e+1}- v_{e,j_e})$, $0\leq e\leq 1$.
Then for every polynomial   $g\in V[Y_0,Y_1]$ with $\deg_{Y_i} g\leq 1$, $i=0,1$ and every ordinals $1\leq \nu_0<\lambda_0$, $1\leq \nu_1<\lambda_1$  there exist some  $\nu_0<j_0<\lambda_0$, $\nu_1<j_1<\lambda_1$, $c\in V\setminus \{0\}$ and a polynomial $g_1\in V[Y_{0,j_0}, Y_{1,j_1}]$ such that 
$$g(y_0,y_1)=cg_1(y_{0,j_0}, y_{1,j_1})$$
with $g_1\not \in \mm V[Y_{0,j_0}, Y_{1,j_1}]$ and 
the values of the coefficients of  all monomials  of $g_1-g_1(0,0)$ are different.
\end{Lemma}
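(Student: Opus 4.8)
The plan is to expand $g(y_0,y_1)$ in the rescaled variables $y_{0,j_0},y_{1,j_1}$, read off the values of the new coefficients, and then choose $j_0,j_1$ so that these become pairwise distinct by appealing to Lemma \ref{o2}. We may suppose $g\neq 0$; write $g=a_{00}+a_{10}Y_0+a_{01}Y_1+a_{11}Y_0Y_1$ with $a_{ij}\in V$, and let $G$ be the value group, common to $V$ and $V'$ since the extension is immediate. Put $\delta_{e,j_e}=v_{e,j_e+1}-v_{e,j_e}$, so that $y_e=v_{e,j_e}+\delta_{e,j_e}\,y_{e,j_e}$ and $\gamma_{e,j_e}:=\val(\delta_{e,j_e})=\val(y_e-v_{e,j_e})$; as $v_e$ is pseudo convergent, $(\gamma_{e,j_e})_{j_e<\lambda_e}$ is a well-ordered, strictly increasing family of elements $\geq 0$ of $G$ without a last element. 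Substituting and collecting monomials in $y_{0,j_0},y_{1,j_1}$,
\begin{equation*}
g(y_0,y_1)=b_{00}+b_{10}\,y_{0,j_0}+b_{01}\,y_{1,j_1}+b_{11}\,y_{0,j_0}y_{1,j_1},
\end{equation*}
where $b_{00}=g(v_{0,j_0},v_{1,j_1})$, $b_{10}=(a_{10}+a_{11}v_{1,j_1})\delta_{0,j_0}$, $b_{01}=(a_{01}+a_{11}v_{0,j_0})\delta_{1,j_1}$, $b_{11}=a_{11}\delta_{0,j_0}\delta_{1,j_1}$, all in $V$.

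Next I would pin down the values of $a_{10}+a_{11}v_{1,j_1}$ and $a_{01}+a_{11}v_{0,j_0}$. If $a_{11}\neq 0$ then $-a_{10}/a_{11}\in K$, which by hypothesis is not a pseudo limit of $v_1$; by the standard dichotomy for pseudo convergent sequences (see \cite{Kap}, \cite{Sch}) --- an element $w\in K$ is either a pseudo limit of $v_1$, or else $\val(w-v_{1,j_1})$ is constant for all sufficiently large $j_1$ --- the value $\val(a_{10}+a_{11}v_{1,j_1})$ equals a constant $\beta_0\in G$ for all $j_1>\rho_1'$, some $\rho_1'<\lambda_1$. (If $a_{11}=0$ it is outright constant, equal to $\val(a_{10})$, and $b_{11}=0$.) Symmetrically $\val(a_{01}+a_{11}v_{0,j_0})=\beta_1$ for all $j_0>\rho_0'$, some $\rho_0'<\lambda_0$. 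Hence for $j_0>\rho_0'$, $j_1>\rho_1'$,
\begin{equation*}
\val(b_{10})=\beta_0+\gamma_{0,j_0},\quad \val(b_{01})=\beta_1+\gamma_{1,j_1},\quad \val(b_{11})=\val(a_{11})+\gamma_{0,j_0}+\gamma_{1,j_1},
\end{equation*}
the last being $+\infty$ precisely when $a_{11}=0$.

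Now I would apply Lemma \ref{o2} over $G$, with its three fixed elements $\beta_0,\beta_1,\beta_{0,1}$ taken to be $\beta_0$, $\beta_1$ and $\val(a_{11})$ --- an arbitrary element of $G$ being used in place of $\val(a_{11})$ if $a_{11}=0$, which is harmless since then $b_{11}=0$ --- and with the increasing families $(\gamma_{0,j_0})_{j_0<\lambda_0}$, $(\gamma_{1,j_1})_{j_1<\lambda_1}$. The concluding assertion of that lemma, applied to the ordinals $\max\{\nu_0,\rho_0'\}<\lambda_0$ and $\max\{\nu_1,\rho_1'\}<\lambda_1$, produces $\nu_0<j_0<\lambda_0$ and $\nu_1<j_1<\lambda_1$ for which $\beta_0+\gamma_{0,j_0}$, $\beta_1+\gamma_{1,j_1}$, $\val(a_{11})+\gamma_{0,j_0}+\gamma_{1,j_1}$ are pairwise different, i.e.\ $\val(b_{10}),\val(b_{01}),\val(b_{11})$ are pairwise different (if some $b_{ij}=0$, only the distinctness of the surviving values is required, and it is supplied). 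As $g\neq 0$, not all $b_{ij}$ vanish for this choice of $j_0,j_1$, so we may pick $c\in V\setminus\{0\}$ whose value is the smallest of $\val(b_{00}),\val(b_{10}),\val(b_{01}),\val(b_{11})$; this value is a nonnegative element of $G$, hence attained in $V$. Then
\begin{equation*}
g_1:=\frac{b_{00}}{c}+\frac{b_{10}}{c}\,Y_{0,j_0}+\frac{b_{01}}{c}\,Y_{1,j_1}+\frac{b_{11}}{c}\,Y_{0,j_0}Y_{1,j_1}\in V[Y_{0,j_0},Y_{1,j_1}]
\end{equation*}
satisfies $g(y_0,y_1)=c\,g_1(y_{0,j_0},y_{1,j_1})$, has a unit among its coefficients so that $g_1\notin\mm V[Y_{0,j_0},Y_{1,j_1}]$, and the coefficients $b_{10}/c,\,b_{01}/c,\,b_{11}/c$ of the non-constant monomials of $g_1-g_1(0,0)$ have pairwise different values.

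The crux is the middle step: after factoring $\delta_{e,j_e}$ out of $b_{10}$ and $b_{01}$, the remaining factors $a_{10}+a_{11}v_{1,j_1}$ and $a_{01}+a_{11}v_{0,j_0}$ have values eventually \emph{independent} of the varying index --- and it is precisely here that the assumption that $v_0$ and $v_1$ have no pseudo limit in $K$ intervenes --- so that $\val(b_{10}),\val(b_{01}),\val(b_{11})$ take the form $\beta_0+\gamma_{0,j_0}$, $\beta_1+\gamma_{1,j_1}$, $\beta_{0,1}+\gamma_{0,j_0}+\gamma_{1,j_1}$ covered by Lemma \ref{o2}. The remainder --- the substitution bookkeeping, the degenerate cases $a_{11}=0$ or some $b_{ij}=0$, the choice of $c$ --- is routine.
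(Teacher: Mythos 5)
Your proof is correct and follows essentially the same route as the paper's: the same expansion of $g$ in the rescaled variables, the same observation that $\val(a_{10}+a_{11}v_{1,j_1})$ and $\val(a_{01}+a_{11}v_{0,j_0})$ are eventually constant because $v_0,v_1$ have no pseudo limits in $K$, and the same application of Lemma \ref{o2} to make the three values $\beta_0+\gamma_{0,j_0}$, $\beta_1+\gamma_{1,j_1}$, $\val(a_{11})+\gamma_{0,j_0}+\gamma_{1,j_1}$ distinct. The only (harmless) difference is at the end: where the paper distinguishes whether $\val(g(v_{0,j_0},v_{1,j_1}))$ lies below all the $\gamma_i$ or coincides with the minimal one, you simply take $c$ of minimal value among all four coefficients, which suffices for the lemma as stated.
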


\begin{proof} Let $g=b_0Y_0+b_1Y_1+b_2Y_0Y_1+b_3\in V[Y_0,Y_1]$ for $b_0,b_1,b_2,b_3\in V$ and $1\leq j_0<\lambda_0$, $1\leq j_1<\lambda_1$ and fix some ordinals $1\leq \nu_0<\lambda_0$, $1\leq \nu_1<\lambda_1$. 
By Taylor expansion  we have 
$g(y_0,y_1)=$
$$g( v_{0,j_0}, v_{1,j_1})+(b_0+b_2v_{1,j_1})(y_0- v_{0,j_0})+(b_1+b_2v_{0,j_0})(y_1- v_{1,j_1})+b_2(y_0- v_{0,j_0})(y_1- v_{1,j_1})=$$
$$g( v_{0,j_0}, v_{1,j_1}))+(b_0+b_2v_{1,j_1})(v_{0,j_0+1}- v_{0,j_0})y_{0,j_0}+(b_1+b_2v_{0,j_0})(v_{1,j_1+1}- v_{1,j_1})y_{1,j_1}+$$
$$b_2(v_{0,j_0+1}- v_{0,j_0})(v_{1,j_1+1}- v_{1,j_1})y_{0,j_0}y_{1,j_1}.$$

Set $\gamma_0=\val((b_0+b_2v_{1,j_1})(v_{0,j_0+1}- v_{0,j_0}))$,  $\gamma_1=(b_1+b_2v_{0,j_0})(v_{1,j_1+1}- v_{1,j_1})$, $\gamma_2=\val(b_2(v_{0,j_0+1}- v_{0,j_0})(v_{1,j_1+1}- v_{1,j_1}))$. 
Note that $y_{0,j_0}$, $y_{1,j_1}$ are units in $V'$. Also see that  $\beta_0=\val(b_0+b_2v_{1,j_1})$, $\beta_1=\val(b_1+b_2v_{0,j_0})$  are constant for $j_0,j_1$ large (that is for $j_0,j_1>\nu$ for some $1\leq \nu<\lambda$). This is explained for example in the proof of \cite[Theorem 3]{Kap}.   Actually,  otherwise it follows that for example  $(\val(v_{1,j_1}))_{1\leq j_1<\lambda}$  is increasing and if it is not fundamental then $(v_{1,j_1})_{1\leq j_1<\lambda}$ has a pseudo limit in $K$, which is false (note in this case that  every $z\in V$ with $\val(z)> \val(v_{1,j_1})=\val(v_{1,j_1+1}-v_{1,j_1})$, $j_1<\lambda$ is a pseudo limit of $v_1$). If  $(v_{1,j_1})_{1\leq j_1<\lambda}$
is fundamental then its value is $\val(y_1)$ for $j_1$ large, that is constant for $j_1$ large.
 Essentially is that the total degree of each $\partial g/\partial Y_i$ is $\leq 1$, $0\leq i\leq 1$. By Lemma \ref{o2} applied for $\beta_0$, $\beta_1$, $\beta_{0,1}=\val(b_2)$   and $\gamma_{0,j_0}'=\val(v_{0,j_0+1}- v_{0,j_0})$, $\gamma_{1,j_1}'=\val(v_{1,j_1+1}- v_{1,j_1})$ we get that these $\gamma_i$ are different for some high enough $j_0>\nu_0$, $j_1>\nu_1$. Note that $\val(g(y_0,y_1))\not = \gamma_i$ for all $i=0,1,2$ because $\val(g(y_0,y_1))$ is constant but the others increase.
Thus we have either  
$$(*)\ \ \val(g(y_0,y_1))=\val(g(v_{0,j_0},v_{1,j_1}))<\gamma_i,$$
 for all $0\leq i\leq 2$ for some $j_0,j_1$,   or  $\val(g(v_{0,j_0},v_{1,j_1}))=\gamma_i$ for some $0\leq i\leq 2$ and $j_0,j_1$, which is minimum among all $(\gamma_i)$.
If $(*)$ holds
  then for $c=g(v_{0,j_0},v_{1,j_1})$ we have $g(y_0,y_1)=cg_1(y_{0,j_0}, y_{1,j_1})$ for some polynomial $g_1\in  V[Y_{0,j_0}, Y_{1,j_1}]$ with  $g_1-1 \in \mm V[Y_{0,j_0}, Y_{1,j_1}])$. Moreover, the values of the coefficients of  all monomials  of $g_1-1$ are different. In general, $(*)$ does not hold as shows
  \cite[Example 16]{P3}.

Assume that $(*)$ does not hold,  $\gamma_1$ is the minimum between these $\gamma_e$ and so $\val(g(y_0,y_1))> \gamma_1$. Then $g(y_0,y_1)=c g_1(y_{0,j_0},y_{1,j_1})$ where $c=(b_1+b_2v_{0,j_0})(v_{1,j_1+1}- v_{1,j_1})$ and $g_1\in V[Y_{0,j_0},Y_{1,j_1}]$ is a polynomial with $\deg_{Y_e} g_1\leq 1$, $e=0,1$, the coefficient of $Y_{1,j_1}$ being $1$ and    the values of the coefficients of  all monomials  of $g_1-g_1(0,0)$ are different. Similarly, we treat the case when  $\gamma_0$ is the minimum between $\gamma_e$.

 Next assume that $(*)$ does not hold and $\gamma_2$ is the minimum between  $\gamma_e$. As above, for 
 $$c= b_2(v_{0,j_0+1}- v_{0,j_0})(v_{1,j_1+1}- v_{1,j_1})$$
  we get\ $g(y_0,y_1)=cg_1(y_{0,j_0},y_{1,j_1})$, where $g_1\in V[Y_{0,j_0},Y_{1,j_1}]$ is a polynomial with $\deg_{Y_e} g_1\leq 1$, $e=0,1$, the coefficient of $Y_{0,j_0}Y_{1,j_1}$ being $1$ and 
   the values of the coefficients of  all monomials  of $g_1-g_1(0,0)$ are different. 
 \hfill\ \end{proof} 

\vskip 0.3 cm
\section{Some extensions of Lemma \ref{ka0} for a polynomial of degree $\leq 1$ in several $(y_e)$.}
 The lemma bellow is an easy extension of Lemma \ref{o2}.

\begin{Lemma} \label{oe1} Let $m\in {\bf N}$, $S $ a set of nonempty subsets of $\{0,\ldots,m\}$, $(\beta_{\sigma})_{\sigma\in S}$ 
 some  elements of an ordered abelian group $G$, $(t_e)_{0\leq e\leq m}$ some positive integers  and let $(\gamma_{e,j_e})_{1\leq j_e<\lambda_e}$, $0\leq e\leq m$  be some increasing sequences of elements $\geq 0$ of $G$. For some ordinals $(j_e)_{0\leq e\leq m}$ with $j_e<\lambda_e$, $0\leq e\leq m$ and $\sigma\in S$ let $P_{\sigma,j_{\sigma}}=\beta_{\sigma}+
\sum_{e\in \sigma}t_e \gamma_{e,j_e}$,  where $j_{\sigma}=(j_e)_{e\in \sigma}$.
 Then for any ordinals $(\rho_e)_{0\leq e\leq m}$, $1\leq \rho_e<\lambda_e$ there exist some ordinals $(j_e)_{0\leq e\leq m}$ with $\rho_e<j_e<\lambda_e$ such that the elements
 $P_{\sigma,j_{\sigma}}$
 are different. 
\end{Lemma}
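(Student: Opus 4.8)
The plan is to prove this by induction on the number of "shared" coordinates among the subsets in $S$, much as Lemma \ref{o2} reduced a three-term collision problem to the elementary two-monotone-set argument of Lemma \ref{o1}. First I would dispose of the trivial reduction: if some $P_{\sigma,j_\sigma} = P_{\tau,j_\tau}$ as formal expressions whenever they agree (i.e. $\beta_\sigma=\beta_\tau$ and $\sigma=\tau$), there is nothing to do, so assume all the $\beta_\sigma$ are "generic." The key observation is that for any fixed pair $\sigma\neq\tau$ in $S$, the equation $P_{\sigma,j_\sigma}=P_{\tau,j_\tau}$ reads $\beta_\sigma-\beta_\tau = \sum_{e\in\tau}t_e\gamma_{e,j_e}-\sum_{e\in\sigma}t_e\gamma_{e,j_e}$. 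The terms for $e\in\sigma\cap\tau$ cancel partially, but because each $(\gamma_{e,j_e})_{j_e<\lambda_e}$ is strictly monotone increasing, once the coordinates $(j_e)_{e\notin\sigma\cap\tau}$ are pushed high enough the two sides of any such equation grow at incompatible rates in at least one free coordinate; so for each pair $(\sigma,\tau)$ there is a threshold ordinal beyond which $P_{\sigma,j_\sigma}\neq P_{\tau,j_\tau}$ holds for \emph{all} choices of the coordinates outside a bounded set.

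Concretely, I would argue as follows. Fix an ordered pair $\sigma\neq\tau$. Pick an index $e_0$ lying in exactly one of $\sigma,\tau$ — say $e_0\in\sigma\setminus\tau$; such an $e_0$ exists since $\sigma\neq\tau$ as subsets (if one is contained in the other, take $e_0$ in the larger one but not the smaller; the case where the symmetric difference is empty cannot occur). Then in the equation $P_{\sigma,j_\sigma}=P_{\tau,j_\tau}$ the coordinate $j_{e_0}$ appears on the left with positive multiplier $t_{e_0}$ and does not appear on the right. Holding all other coordinates fixed and letting $j_{e_0}$ increase through $\lambda_{e_0}$, the left side strictly increases while the right side is constant, so the equality can hold for at most one value of $j_{e_0}$; moreover, after we fix that all the coordinates other than $j_{e_0}$ are large (bigger than the $\rho$'s), the single offending value of $j_{e_0}$ is bounded and we may increase $\rho_{e_0}$ past it. Running over the finitely many ordered pairs $(\sigma,\tau)$ from the finite set $S$ — and being careful to process them in an order that does not undo earlier choices, which is guaranteed because at each step we only \emph{raise} the thresholds $\rho_e$ — we arrive at thresholds $(\rho_e')_{e}$ past which all the $P_{\sigma,j_\sigma}$ are pairwise distinct. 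Taking any $j_e$ with $\rho_e'<j_e<\lambda_e$ (possible since there is no last element) finishes the proof.

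The main obstacle I anticipate is purely bookkeeping: making sure that "fixing the other coordinates large and then raising one threshold" can be iterated consistently over all pairs in $S$ without the thresholds chasing each other upward without end. This is handled by noting that $S$ is finite, hence there are only finitely many pairs, so the process terminates after finitely many monotone updates of the finite tuple $(\rho_e)_{0\leq e\leq m}$; at no point do we need a coordinate to be \emph{small}, only large, so later steps never conflict with earlier ones. A second minor point to check is the existence of the separating index $e_0$: if $\sigma$ and $\tau$ are distinct nonempty subsets with neither containing the other, any element of $\sigma\setminus\tau$ works; if $\sigma\subsetneq\tau$, any element of $\tau\setminus\sigma$ works and appears only on the $\tau$-side — the argument is symmetric in $\sigma,\tau$. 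Once these are in place the rest is exactly the monotonicity trick already used twice in the preceding lemmas, so I would keep the write-up short and refer back to the proof of Lemma \ref{o2} for the pattern.
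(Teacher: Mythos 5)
There is a genuine gap, and it sits exactly at the step you flag as ``bookkeeping.'' Your per-pair argument establishes: \emph{for each fixed choice} of the coordinates other than $j_{e_0}$, at most one value of $j_{e_0}$ produces the collision $P_{\sigma,j_\sigma}=P_{\tau,j_\tau}$. From this you conclude that ``the single offending value of $j_{e_0}$ is bounded and we may increase $\rho_{e_0}$ past it,'' and hence that after finitely many threshold updates \emph{every} tuple with $j_e>\rho_e'$ works. But the offending value of $j_{e_0}$ depends on the other coordinates, which at that stage are only constrained to be large, not fixed; as they vary, the offending value of $j_{e_0}$ varies with them and is in general unbounded. Concretely, take $m=1$, $S=\{\{0\},\{1\}\}$, $\beta_{\{0\}}=\beta_{\{1\}}$, $t_0=t_1=1$ and $\gamma_{0,j}=\gamma_{1,j}$ the same increasing sequence: the collisions occur exactly along the diagonal $j_0=j_1$, so no choice of thresholds $(\rho_0',\rho_1')$ can make \emph{all} tuples beyond them collision-free. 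The lemma only asserts the existence of one good tuple, and your scheme proves (or rather, would need) the stronger uniform statement, which is false.

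The repair is to choose the coordinates sequentially rather than by simultaneous thresholds, and this is what the paper does: induct on $m$, use the induction hypothesis to fix $j_0,\ldots,j_{m-1}$ so that the $P_{\sigma,j_\sigma}$ with $m\notin\sigma$ \emph{and} the partial sums $\delta_{\sigma',j_{\sigma'}}=\beta_{\sigma'}+\sum_{e\in\sigma',\,e<m}t_e\gamma_{e,j_e}$ for $m\in\sigma'$ are already pairwise distinct (strengthening the inductive statement in this way is the idea your plan is missing), and only then choose $j_m$: the values $\delta_{\sigma',j_{\sigma'}}+t_m\gamma_{m,j_m}$ stay distinct by Lemma \ref{o}, and since $(\gamma_{m,j_m})$ is strictly increasing each equation $\delta_{\sigma',j_{\sigma'}}+t_m\gamma_{m,j_m}=P_{\sigma'',j_{\sigma''}}$ (with the right side now a fixed element) excludes at most one value of $j_m$, so finitely many bad values can be passed because the sequence has no last element. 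Your pairwise separation idea is fine once the other coordinates are genuinely fixed; as written, with only lower bounds on them, the argument does not close.
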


\begin{proof} We use induction on $m$, the case $m=0$ being trivial. Assume $m>0$ and choose some ordinals  $(\rho_e)_{0\leq e\leq m}$, $\rho_e<\lambda_e$. 
 Let $S'$ be the set of all $\sigma\in S$ which contains $m$ and set $\delta_{\sigma',j_{\sigma'}}=\beta_{\sigma'}+\sum_{e\in \sigma', e<m} t_e\gamma_{e,j_e}$ for $\sigma'\in S'$, $j_{\sigma'}=(j_e)_{e\in \sigma'}$.
By induction hypothesis there exist $\rho_0<j_0<\lambda_0,\ldots,\rho_{m-1}<j_{m-1}<\lambda_{m-1}$    such that $P_{\sigma,j_{\sigma}}$, $\delta_{\sigma',j_{\sigma'}}$ are different for any $\sigma\in S$ which does not contain $m$ and $\sigma'\in S'$. Fix these $j_e$, $e<m$. By Lemma \ref{o}  the elements 
$\delta_{\sigma',j_{\sigma'}}+t_m\gamma_{m,j_m}$ are all different  for $j_m$ large enough (note that all $\delta_{\sigma',j_{\sigma'}}$ are different), in particular for some $j_m>\nu_m$, where $\rho_m<\nu_m<\lambda_m$. Increasing $j_m$ (if necessary)  we may assume also that $P_{\sigma',j_{\sigma'}}$, $\sigma'\in S'$ are different from the fixed $P_{\sigma'',j_{\sigma''}}$,  $\sigma''\in S\setminus S'$ because $(\gamma_{m,j_m})$ is increasing.
\hfill\ \end{proof}

\begin{Lemma} \label{ka1}
Let $V \subset V'$ be an  immediate extension of valuation rings,  $K, K'$ the fraction fields of $V, V'$, $\mm, \mm'$ the maximal ideals of $V,V'$ and  $(y_e)_{0\leq e\leq m}$, $m\in {\bf N}$ some elements of $ V'$, which are not in $K$. Assume that for all $0\leq e\leq m$  the element $y_e$ is  a pseudo limit of a pseudo convergent sequence $v_e=(v_{e,j_e})_{1\leq j_e<\lambda_e}$ over $V$, which  has no pseudo limits in $K$.
 Set $y_{e,j_e}=(y_e-v_{e,j_e})/(v_{e,j_e+1}-v_{e,j_e})$.
Then for every polynomial   $g\in V[Y_0,\ldots,Y_m]$ such that   $ \deg_{Y_i} g\leq 1$ for all $0\leq i\leq m$ and $\partial g/\partial Y_i$,  $0\leq i\leq m$ has at most one monomial (that is, it has the form $a\Pi_eY_e$ for some $a\in V$ and several $e$)  and every ordinals $1\leq \nu_i<\lambda_i$, $0\leq i\leq m$  there exist   some  $\nu_0<j_0<\lambda_0,\ldots, \nu_m<j_m<\lambda_m$, $c\in V\setminus \{0\}$ and a polynomial $g_1\in V[Y_{0,j_0},\ldots, Y_{m,j_m}]$ such that 
$$g(y_0,\ldots,y_m)=cg_1(y_{0,j_0},\ldots, y_{m,j_m})$$
with $g_1\not\in \mm V[Y_{0,j_0},\ldots, Y_{m,j_m}]$
 and
 the values of the coefficients of  all monomials  of $g_1-g_1(0)$ are different. 
\end{Lemma}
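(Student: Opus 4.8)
The plan is to generalize the proof of Lemma \ref{ka0} from the two-variable case to the $(m+1)$-variable case, using Lemma \ref{oe1} in place of Lemma \ref{o2}. First I would write the Taylor expansion of $g$ around the point $(v_{0,j_0},\ldots,v_{m,j_m})$. Since $\deg_{Y_i}g\leq 1$ for all $i$, the expansion terminates: writing $y_e-v_{e,j_e}=(v_{e,j_e+1}-v_{e,j_e})y_{e,j_e}$, we obtain
$$g(y_0,\ldots,y_m)=\sum_{\sigma\subseteq\{0,\ldots,m\}}c_\sigma(j_\sigma)\prod_{e\in\sigma}y_{e,j_e},$$
where $c_\emptyset(j)=g(v_{0,j_0},\ldots,v_{m,j_m})$ and, for $\sigma\neq\emptyset$, $c_\sigma(j_\sigma)=\big(\partial^{|\sigma|}g/\prod_{e\in\sigma}\partial Y_e\big)(v_{\cdot})\cdot\prod_{e\in\sigma}(v_{e,j_e+1}-v_{e,j_e})$. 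The hypothesis that each first partial $\partial g/\partial Y_i$ has at most one monomial forces every higher mixed partial $\partial^{|\sigma|}g/\prod_{e\in\sigma}\partial Y_e$ (for $\sigma\neq\emptyset$) to be a single monomial $a_\sigma\prod_{e\notin\sigma}Y_e$ with $a_\sigma\in V$; evaluating at $v_{\cdot}$, its valuation $\beta_\sigma:=\val(a_\sigma\prod_{e\notin\sigma}v_{e,j_e})$ stabilizes for $j_e$ large, exactly as in Lemma \ref{ka0}, using that the minimal degree of a polynomial $h\in V[Y]$ witnessing that $v_e$ is algebraic is $\geq 2$ (by the proof of \cite[Theorem 3]{Kap}), so $\val(a_\sigma)+\sum_{e\notin\sigma}\val(v_{e,j_e})$ is eventually constant.

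Next I would set $\gamma'_{e,j_e}=\val(v_{e,j_e+1}-v_{e,j_e})$; these are well-ordered, monotone increasing sets of elements $\geq 0$ without last element (monotone increasing because $v_e$ is pseudo convergent). Applying Lemma \ref{oe1} with $S=\{\sigma\subseteq\{0,\ldots,m\}:\sigma\neq\emptyset,\ a_\sigma\neq 0\}$, all exponents $t_e=1$, the stabilized $\beta_\sigma$, and the $\gamma'_{e,j_e}$, I get ordinals $\nu_i<j_i<\lambda_i$ for which the values $\val(c_\sigma(j_\sigma))=\beta_\sigma+\sum_{e\in\sigma}\gamma'_{e,j_e}$, $\sigma\in S$, are pairwise distinct. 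Since each such value is strictly increasing in the $j_e$ while $\val(g(y_0,\ldots,y_m))$ is eventually constant (as $g(y_0,\ldots,y_m)\in V'$ is a fixed element, and if it were $0$ the claim is trivial taking $c=1$), we have $\val(g(y_0,\ldots,y_m))\neq\val(c_\sigma(j_\sigma))$ for all $\sigma\in S$ once the $j_e$ are large. Hence either $\val(g(y_0,\ldots,y_m))=\val(c_\emptyset(j))$ is strictly smaller than all $\val(c_\sigma(j_\sigma))$, $\sigma\in S$, or else the minimum of $\{\val(c_\sigma(j_\sigma)):\sigma\in S\}$ is strictly smaller than $\val(c_\emptyset(j))$ and is attained at a unique $\sigma_0\in S$.

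In the first case put $c=c_\emptyset(j)=g(v_{0,j_0},\ldots,v_{m,j_m})$; then $g(y_0,\ldots,y_m)=c\,g_1$ with $g_1=1+\sum_{\sigma\in S}(c_\sigma(j_\sigma)/c)\prod_{e\in\sigma}Y_{e,j_e}$, so $g_1\in V[Y_{0,j_0},\ldots,Y_{m,j_m}]$ (each $c_\sigma(j_\sigma)/c\in V$ since $\val c\leq\val c_\sigma$), $g_1-1\in\mm V[\ldots]$ so $g_1\notin\mm V[\ldots]$, and the values of the coefficients of the monomials of $g_1-g_1(0)=g_1-1$ are the distinct elements $\val(c_\sigma(j_\sigma))-\val(c)$. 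In the second case put $c=c_{\sigma_0}(j_{\sigma_0})$; dividing through, $g_1=\sum_{\sigma\in S\cup\{\emptyset\}}(c_\sigma/c)\prod_{e\in\sigma}Y_{e,j_e}$ lies in $V[\ldots]$, the monomial $\prod_{e\in\sigma_0}Y_{e,j_e}$ has coefficient $1$ so $g_1\notin\mm V[\ldots]$, and again the nonzero monomial coefficients of $g_1-g_1(0)$ have pairwise distinct values, with $\deg_{Y_{e,j_e}}g_1\leq 1$ inherited from the Taylor expansion. The main obstacle, and the place requiring care, is verifying that the single-monomial hypothesis on $\partial g/\partial Y_i$ propagates to all higher mixed partials and that the resulting $\beta_\sigma$ stabilize correctly when several $v_{e,j_e}$ appear as factors — one must track that each such factor is a unit contribution whose valuation is eventually constant because no $v_e$ has a pseudo limit of degree $1$; otherwise the bookkeeping is a direct extension of Lemma \ref{ka0}.
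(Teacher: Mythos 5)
Your proposal follows essentially the same route as the paper's proof: Taylor expansion of $g$ at $(v_{0,j_0},\ldots,v_{m,j_m})$, the observation that the single-monomial hypothesis on $\partial g/\partial Y_i$ makes every mixed partial a single monomial so that the values $\beta_\sigma$ stabilize for large $(j_e)$, an application of Lemma \ref{oe1} to the $\beta_\sigma$ and $\gamma'_{e,j_e}=\val(v_{e,j_e+1}-v_{e,j_e})$ to separate the values $\gamma_\sigma$, and a two-case division by either $g(v_{0,j_0},\ldots,v_{m,j_m})$ or the coefficient of minimal value.

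One correction to your case split, though. Once the $j_e$ are large enough that $\val(g(y_0,\ldots,y_m))$ differs from every $\gamma_\sigma$, the alternative to your first case is $\val(g(v_{0,j_0},\ldots,v_{m,j_m}))=\gamma_{\sigma_0}=\min_\sigma\gamma_\sigma$ (this is exactly the paper's non-$(+)$ case), not ``$\min_\sigma\gamma_\sigma$ strictly smaller than $\val(c_\emptyset)$'': strict inequality would force $\val(g(y_0,\ldots,y_m))=\gamma_{\sigma_0}$, which you have already excluded, so your stated second case is vacuous while the equality case is omitted from your dichotomy. This is only a slip in the statement, since your construction for the second branch (divide by $c=c_{\sigma_0}(j_{\sigma_0})$) uses nothing beyond $\val(c_{\sigma_0})\le\val(c_\sigma)$ for all $\sigma$, including $\sigma=\emptyset$, and hence applies verbatim in the equality case; there the coefficient of $\prod_{e\in\sigma_0}Y_{e,j_e}$ is $1$, so $g_1\notin\mm V[Y_{0,j_0},\ldots,Y_{m,j_m}]$, and the coefficient values $\gamma_\sigma-\gamma_{\sigma_0}$ of $g_1-g_1(0)$ remain distinct. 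Likewise, the parenthetical that $g(y_0,\ldots,y_m)=0$ is ``trivial taking $c=1$'' is not right as stated (one still must produce $g_1$ with $g_1(y_{0,j_0},\ldots,y_{m,j_m})=0$, $g_1\notin\mm V[\ldots]$ and distinct coefficient values), but this situation is absorbed by the same second branch since then $\val(g(y_0,\ldots,y_m))$ exceeds all $\gamma_\sigma$. With these adjustments your argument coincides with the paper's.
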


\begin{proof} By Taylor expansion we have
$$g(y_0,\ldots,y_m)=\sum_{\tau\subset \{0,\ldots,m\}} (\partial^{|\tau|} g/\partial Y_{\tau}) (v_{0,j_0},\ldots,v_{m,j_m})\Pi_{e\in \tau} (y_e-v_{e,j_e}),$$
where $Y_{\tau}=(Y_e)_{e\in \tau}$ and $|\tau|$ denotes the cardinal of $\tau$. Since the value of a monomial in $(v_{e,j_e})$ is constant for $(j_e)$ large (that is constant for $j_0>\nu$ for some $1\leq \nu<\lambda$; see the proof of Lemma \ref{ka0})
we conclude  by our hypothesis  that $\beta_{\tau}=\val((\partial^{|\tau|} g/\partial Y_{\tau}) (v_{0,j_0},\ldots,v_{m,j_m}))$ is constant for $(j_e)$ large when $\tau\not =\emptyset$ as it was explained in the proof of Lemma \ref{ka0}.
 Set $\gamma_{e,j_e}'=\val(v_{e,j_e+1}- v_{e,j_e})=\val(y_e- v_{e,j_e})$.   By Lemma \ref{oe1} applied to $(\beta_{\tau})$, $(\gamma_{e,j_e}')$,  we see that 
$$\gamma_{\tau}=\val ((\partial^{|\tau|} g/\partial Y_{\tau}) (v_{0,j_0},\ldots,v_{m,j_m})\Pi_{e\in \tau} (y_e-v_{e,j_e}))$$
are different for all $\tau\not= \emptyset$ for some $(j_e)$, $j_e>\nu_e$. Also note that $\val(g(y_0,\ldots,y_m))$
 is different from $\gamma_{\tau}$, $\tau\not =\emptyset$ because the first one is constant but the others increase.
 It 
 follows that we have either 
$$(+)\ \ \val(g(y_0,\ldots, y_m))=\val(g(v_{0,j_0},\ldots,v_{m,j_m}))< \gamma_{\tau},$$
for some $(j_e)$, $j_e>\nu_e$ and  all $\tau\not =\emptyset$,
  or 
  $\val(g(v_{0,j_0},\ldots,v_{m,j_m}))=\gamma_{\sigma}$ for  some $(j_e)$, $j_e>\nu_e$ and some $\sigma\not = \emptyset$, where $\gamma_{\sigma}$ is the minimum among $(\gamma_{\tau})$, $\tau\not=\emptyset$.
If $(+)$ holds then  for $c=g(v_{0,j_0},\ldots,v_{m,j_m})$ we have $\val(c)<\gamma_{\tau}$ for all $\tau\not =\emptyset$. Thus
$g(y_0,\ldots, y_m)=c g_1(y_{0,j_0},\ldots, y_{m,j_m})$ for some polynomial $g_1\in  V[Y_{0,j_0},\ldots, Y_{m,j_m}]$ with  $g_1-1\in \mm V[Y_{0,j_0}, \ldots,Y_{m,j_m}]$. Moreover,   the values of the coefficients of  all monomials  of $g_1-1$ are different. 
   In  general,  $(+) $  does not hold for any $j_0,\ldots,j_m$ (see \cite[Example 16]{P3}).

Assume that $(+)$ does not hold,  $\gamma_{\sigma}$
 is the minimum between these $\gamma_{\tau}$, $\tau\not = \emptyset$  and so $\val(g(y_0,\ldots,y_m))>\gamma_{\sigma}$. Then $g(y_0,\ldots,y_m)=c g_1(y_{0,j_0},\ldots,y_{m,j_m})$   for  some $(j_e)$, $j_e>\nu_e$, where 
 $$c=(\partial^{|\sigma|} g/\partial Y_{\sigma}) (v_{0,j_0},\ldots,v_{m,j_m})\Pi_{e\in \sigma} (v_{e,j_e+1}-v_{e,j_e})$$
  and $g_1\in V[Y_{0,j_0},\ldots,Y_{m,j_m}]$ is a polynomial  of the form $\Pi_{e\in \sigma} y_{e,j_e}+g_1'$ for $g'_1\in  V[Y_{0,j_0},\ldots,Y_{m,j_m}]$ with  
  $$g_1'-g'_1(0)\in \mm V[Y_{0,j_0},\ldots,Y_{m,j_m}]),$$
  because $\gamma_{\sigma}$ is the minimum.  In fact $g_1(0)=g_1(v_{0,j_0},\ldots,v_{m,j_m})$ and
 the values of the coefficients of  all monomials  of $g_1-g_1(0)$ are different. 
\hfill\ \end{proof}

\begin{Lemma} \label{kat}
Let $V \subset V'$ be an  immediate extension of valuation rings,  $K, K'$ the fraction fields of $V, V'$, $\mm,\mm'$ the maximal ideals of $V,V'$ and  $(y_e)_{0\leq e\leq m}$, $m\in {\bf N}$ some elements of $ V'$, which are not in $K$. Assume that for all $0\leq e\leq m$  the element $y_e$ is  a pseudo limit of a pseudo convergent sequence $v_e=(v_{e,j_e})_{1\leq j_e<\lambda_e}$ over $V$, which  has no pseudo limits in $K$.
 Set $y_{e,j_e}=(y_e-v_{e,j_e})/(v_{e,j_e+1}-v_{e,j_e})$.
Let  $1\leq j_i<\lambda_i$, $0\leq i\leq m$ be some ordinals and  $g\in V[Y_{0,j_0},\ldots,Y_{m,j_m}]$  a polynomial such that   $ \deg_{Y_{e,j_e}} g\leq 1$ for all $0\leq i\leq m$. Assume that the values of the coefficients of  all monomials  of $g-g(0)$ are different and let  $j_i<t_i<\lambda_i$, $0\leq i\leq m$ be some ordinals. 
 Then there exist some ordinals $t_i\leq t'_i<\lambda_i$, $0\leq i\leq m$ and   a polynomial $g_1\in  V[Y_{0,t'_0},\ldots,Y_{m,t'_m}]$ such that 
$g(y_{0,j_0},\ldots, y_{m,j_m})=g_1(y_{0,t'_0},\ldots, y_{m,t'_m})$
 and  $g_1=g_1(0)+c_r  Y_{r,t'_r}+g_2,$
for some  $r\in \{0,\ldots,m\}$, $c_r\in V\setminus \{0\}$ and $g_2\in  c_r\mm V[Y_{0,t'_0},\ldots, Y_{m,t'_m}]$.   
\end{Lemma}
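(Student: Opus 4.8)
The plan is to rewrite $g$ by an affine change of variables, passing from the variables $Y_{e,j_e}$ to later ones $Y_{e,t'_e}$, and then to locate the dominant coefficient by a valuation computation of Ostrowski type. I begin with the bookkeeping. Assume $g$ is not constant (the case $g=g(0)$ would force $c_r=0$ and is thus excluded), write
\[
g=\sum_{\tau\subseteq\{0,\ldots,m\}}c_\tau\prod_{e\in\tau}Y_{e,j_e},\qquad c_\tau\in V,
\]
let $E$ be the set of $e$ occurring in some monomial of $g$ with $c_\tau\neq0$ (so $E\neq\emptyset$), and record that by hypothesis the values $\val(c_\tau)$, over the nonempty $\tau$ with $c_\tau\neq0$, are pairwise distinct. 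For $e\in E$ put $\mu_e=\min\{\val(c_\tau):e\in\tau,\ c_\tau\neq0\}$, and set $\gamma'_{e,s}=\val(v_{e,s+1}-v_{e,s})=\val(y_e-v_{e,s})$; recall that $\gamma'_{e,s}$ is strictly increasing in $s$ and that $\val(v_{e,s}-v_{e,s'})=\gamma'_{e,\min(s,s')}$ whenever $s\neq s'$, by pseudo convergence.

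Next I would carry out the change of variables. For an ordinal $t'_e$ with $j_e<t_e\le t'_e<\lambda_e$, solving for $y_e$ in the definition of $y_{e,t'_e}$ and substituting into that of $y_{e,j_e}$ gives $y_{e,j_e}=a_e\,y_{e,t'_e}+b_e$ with
\[
a_e=\frac{v_{e,t'_e+1}-v_{e,t'_e}}{v_{e,j_e+1}-v_{e,j_e}},\qquad b_e=\frac{v_{e,t'_e}-v_{e,j_e}}{v_{e,j_e+1}-v_{e,j_e}}.
\]
From the facts above $\val(b_e)=0$ (so $b_e$ is a unit of $V$) and $\val(a_e)=\gamma'_{e,t'_e}-\gamma'_{e,j_e}>0$ (so $a_e\in\mm\setminus\{0\}$). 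Since $g$ is multilinear in the $Y_{e,j_e}$, substituting these affine expressions (and putting $t'_e=t_e$ for $e\notin E$) yields $g(y_{0,j_0},\ldots,y_{m,j_m})=g_1(y_{0,t'_0},\ldots,y_{m,t'_m})$ where
\[
g_1=\sum_{\rho\subseteq E}d_\rho\prod_{e\in\rho}Y_{e,t'_e}\in V[Y_{0,t'_0},\ldots,Y_{m,t'_m}],\qquad d_\rho=\Bigl(\prod_{e\in\rho}a_e\Bigr)\sum_{\tau\supseteq\rho}c_\tau\prod_{e\in\tau\setminus\rho}b_e .
\]

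Then I would compute valuations. Since the $b_e$ are units and the $\val(c_\tau)$ are pairwise distinct, the ultrametric inequality gives, for every nonempty $\rho$ with $d_\rho\neq0$,
\[
\val(d_\rho)=\sum_{e\in\rho}\bigl(\gamma'_{e,t'_e}-\gamma'_{e,j_e}\bigr)+\min\{\val(c_\tau):\tau\supseteq\rho,\ c_\tau\neq0\};
\]
in particular $d_\rho\neq0$ implies $r\in E$ for every $r\in\rho$, and $\val(d_{\{r\}})=(\gamma'_{r,t'_r}-\gamma'_{r,j_r})+\mu_r$. For any $r\in\rho$ one has $\min\{\val(c_\tau):\tau\supseteq\rho,\ c_\tau\neq0\}\ge\mu_r$ (the minimum is over a smaller index set), whence
\[
\val(d_\rho)\ \ge\ \val(d_{\{r\}})+\sum_{e\in\rho\setminus\{r\}}\bigl(\gamma'_{e,t'_e}-\gamma'_{e,j_e}\bigr)\ \ge\ \val(d_{\{r\}}),
\]
with \emph{strict} inequality unless $\rho=\{r\}$, because each $\gamma'_{e,t'_e}-\gamma'_{e,j_e}>0$. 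So, regardless of the choice of the $t'_e$, the minimum of $\val(d_\rho)$ over the nonempty $\rho$ with $d_\rho\neq0$ is attained on a singleton $\{e\}$, $e\in E$.

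Finally I would fix the $t'_e$ so as to break ties among these singletons: for each $e\in E$ the set $\{\gamma'_{e,s}-\gamma'_{e,j_e}:t_e\le s<\lambda_e\}$ is a strictly increasing infinite set of elements $>0$, so — by the pigeonhole argument underlying Lemma \ref{oe1}, or a direct induction on $|E|$ in which the $k$-th step need only avoid $k-1$ prescribed values — I can choose ordinals $t_e\le t'_e<\lambda_e$ making the values $(\gamma'_{e,t'_e}-\gamma'_{e,j_e})+\mu_e$, $e\in E$, pairwise distinct. Then there is a unique $r\in E$ with $\val(d_{\{r\}})$ minimal; put $c_r=d_{\{r\}}\in V\setminus\{0\}$ and $g_2=\sum_{\rho\neq\emptyset,\{r\}}d_\rho\prod_{e\in\rho}Y_{e,t'_e}$. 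By the previous step $\val(d_\rho)>\val(c_r)$ for each $\rho$ occurring in $g_2$, so $d_\rho/c_r\in\mm$, and $g_1=g_1(0)+c_rY_{r,t'_r}+g_2$ with $g_2\in c_r\mm\,V[Y_{0,t'_0},\ldots,Y_{m,t'_m}]$, which is the assertion. I expect the main obstacle to be the valuation bookkeeping of the third step — in particular making sure the dominant coefficient $d_{\{r\}}$ sits at a \emph{single} variable rather than at a product of several, which is exactly where the positivity $\val(a_e)>0$ (equivalently $t'_e>j_e$) is needed — together with the final perturbation of the $t'_e$ that makes the minimal singleton unique.
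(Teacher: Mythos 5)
Your proof is correct and takes essentially the same route as the paper's: the affine substitution $Y_{e,j_e}=b_e+a_eY_{e,t'_e}$ with $b_e$ a unit and $a_e\in\mm$, the exact computation of the values of the new coefficients from the pairwise distinctness of the $\val(c_\tau)$, the resulting strict dominance of the singleton (linear) coefficients over all higher-degree ones, and the final increase of the ordinals (via Lemma \ref{oe1} or a direct finite-avoidance induction) to make the singleton values distinct and single out $c_r$. The only difference is presentational: you give one closed formula for all coefficients $d_\rho$ and a uniform inequality, where the paper treats the linear part and the degree $\geq 2$ part separately and performs the change of ordinals in two stages.
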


\begin{proof} 
 Changing  $Y_{e,j_e}$ by $ Y_{e,t_e}$  we see that 
$y_e-v_{e,j_e}=(v_{e,j_e+1}-v_{e,j_e}) y_{e,j_e}$ and $y_e-v_{e,t_e}=(v_{e,t_e+1}-v_{e,t_e}) y_{e,t_e}$.
Note that 
$\val(v_{e,j_e}-v_{e,t_e})=\val(v_{e,j_e+1}-v_{e,j_e})<\val(v_{e,t_e+1}-v_{e,t_e})$ and so
 $y_{e,j_e}=d_e+b_e y_{e,t_e}$ for an unit $d_e\in V$ and some $b_e\in \mm$. in fact $b_e=(v_{e,t_e+1}-v_{e,t_e})/(v_{e,j_e+1}-v_{e,j_e})$.
 Thus changing $j_e $ by $t_e$ means to  replace  $Y_{e,j_e}$ by $d_e+b_e Y_{e,t_e}$ in $ g$ obtaining a polynomial $g_1$.

Let $c\Pi_{i=1}^s Y_{e_i,j_{e_i}}$, $s>0$ be a monomial of $ g$. By the above transformation we get from that monomial a polynomial 
$$P=c\Pi_{i=1}^s(d_{e_i}+b_{e_i} Y_{e_i,t_{e_i}})\in V[Y_{0,t_0},\ldots, Y_{m,t_m}],$$
 where  the values of the coefficients of all monomials of $P$ 
  of degree $\geq 2$  are strictly bigger than the minimal  value of  coefficients of monomials of the linear part of $P-P(0)$ (that is of monomials of degree one), because in degree $\geq 2$  the coefficients involve  products  of at least two elements $b_e$.

 Let $h$ be the linear part of $ g_1$, that is  the sum of all monomials of degree $1$ of $ g_1$ (in fact the variables $Y_{e,t_e}$) with their coefficients from $g_1$.  Let $g=\sum_{\tau\subset \{0,\ldots,m\}} c_{\tau} Y_{\tau},$
where $Y_{\tau}=\Pi_{e\in \tau} Y_{e,j_e}$  and $c_{\tau}\in V$.
Then
$ g_1=\sum_{\tau\subset \{0,\ldots,m\}} c_{\tau}\Pi_{e\in \tau}(d_{e_i}+b_{e_i} Y_{e_i,t_{e_i}})$ 
and $h$ has the form 
$$h=\sum_{\tau\subset \{0,\ldots,m\}, \tau\not = \emptyset} c_{\tau} \sum_{e\in \tau}(\Pi_{e'\in (\tau\setminus \{e\})} d_{e'})b_e Y_{e,t_e}. $$
 The coefficient of $Y_{e,t_e}$  in $h$ is
$$q_e=\sum_{\tau\subset \{0,\ldots,m\}, e\in \tau} c_{\tau} (\Pi_{e'\in (\tau\setminus \{e\})}d_{e'})
b_e.$$
Let $c_e$ be the coefficient of minimal value from $(c_{\tau})_{\tau\subset \{0,\ldots,m\}, e\in \tau}$, which are different. Then $\val(q_e)=\val(c_e)+\val(b_e) $ and so the values of all coefficients of monomials of $g_1-g_1(0)$ 
  of degree $\geq 2$  are strictly bigger than some $\val(q_e)$, that is the value of a coefficient of a variable of $h$. We could have $\val(q_e)=\val(q_{e'})$, that is $\val(c_e)+\val(b_e)=\val(c_{e'})+\val(b_{e'})$  for some $e\not =e'$. Increasing a little $t_e$ to some $t'_e$ we see that $\val(b_e)$ increases. So we may find $(t'_e)$, $t'_e\geq t_e$ such that all $\val(c_e)+\val(b_e)$ are different, since $(\val(c_e))$ are different and constant and we may apply Lemma \ref{oe1}. Let us assume that the minimum is reached on the coefficient $q_r$ of $Y_{r,t'_r}$, which is enough.    
\hfill\ \end{proof}

\begin{Lemma} \label{ka2}
Let $V \subset V'$ be an  immediate extension of valuation rings,  $K, K'$ the fraction fields of $V, V'$, $\mm, \mm'$ the maximal ideals of $V,V'$ and  $(y_e)_{0\leq e\leq m}$, $m\in {\bf N}$ some elements of $ V'$, which are not in $K$. Assume that for all $0\leq e\leq m$  the element $y_e$ is  a pseudo limit of a pseudo convergent sequence $v_e=(v_{e,j_e})_{1\leq j_e<\lambda_e}$ over $V$, which  has no pseudo limits in $K$.
 Set $y_{e,j_e}=(y_e-v_{e,j_e})/(v_{e,j_e+1}-v_{e,j_e})$.
Then for every polynomial   $g\in V[Y_0,\ldots,Y_m]$ such that   $ \deg_{Y_i} g\leq 1$ for all $0\leq i\leq m$  and every ordinals $1\leq \nu_i<\lambda_i$, $0\leq i\leq m$  there exist some  $\nu_0<j_0<\lambda_0,\ldots, \nu_m<j_m<\lambda_m$ and a polynomial $g_1\in V[Y_{0,j_0},\ldots, Y_{m,j_m}]$ such that 
$$g(y_0,\ldots,y_m)=g_1(y_{0,j_0},\ldots, y_{m,j_m})$$
 and  $g_1=g_1(0)+c_r  Y_{r,j_r}+g_2,$
for some  $r\in \{0,\ldots,m\}$, $c_r\in V\setminus \{0\}$ and $g_2\in  c_r\mm V[Y_{0,j_0},\ldots, Y_{m,j_m}]$.   
\end{Lemma}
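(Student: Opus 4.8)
The plan is to deduce the statement from the conclusion of Lemma \ref{ka1} --- but stripped of its restrictive hypothesis on the partial derivatives of $g$ --- followed by a single application of Lemma \ref{kat}, the whole argument running by induction on $m$. First note that we may assume $g\notin V$, since the required form of $g_1$ contains a non-zero linear term. The case $m=0$ is immediate: if $g=a_0+a_1Y_0$ with $a_1\ne0$, then $g(y_0)=(a_0+a_1v_{0,j_0})+a_1(v_{0,j_0+1}-v_{0,j_0})y_{0,j_0}$, so $g_1=(a_0+a_1v_{0,j_0})+a_1(v_{0,j_0+1}-v_{0,j_0})Y_{0,j_0}$ works for any $j_0>\nu_0$.

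For the inductive step I would first dispose of an easy case: if $(\partial g/\partial Y_i)(y_0,\ldots,y_m)=0$ for some $i$ (the partial derivative being a polynomial in the variables $Y_e$, $e\ne i$), then from $g=g|_{Y_i=0}+Y_i\,(\partial g/\partial Y_i)$ one gets $g(y_0,\ldots,y_m)=(g|_{Y_i=0})(y_0,\ldots,y_m)$, which is the evaluation of a polynomial in the $m$ variables $Y_e$, $e\ne i$, and the inductive hypothesis applies; a free choice of $j_i>\nu_i$ then finishes this case. So assume $(\partial g/\partial Y_i)(y_0,\ldots,y_m)\ne0$ for all $i$. As $g$ is multilinear, the Taylor expansion around $(v_{0,j_0},\ldots,v_{m,j_m})$ is exact and, as in the proof of Lemma \ref{ka1}, the coefficient of $\prod_{e\in\tau}Y_{e,j_e}$ in the resulting polynomial is $\tilde c_\tau=(\partial^{|\tau|}g/\partial Y_\tau)(v_{0,j_0},\ldots,v_{m,j_m})\prod_{e\in\tau}(v_{e,j_e+1}-v_{e,j_e})$. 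Expanding each factor $(\partial^{|\tau|}g/\partial Y_\tau)(v_{0,j_0},\ldots,v_{m,j_m})$ once more around $(y_0,\ldots,y_m)$ --- again exactly, by multilinearity --- we obtain
$$\bigl(\partial^{|\tau|}g/\partial Y_\tau\bigr)(v_{0,j_0},\ldots,v_{m,j_m})=\sum_{\rho\supseteq\tau}\bigl(\partial^{|\rho|}g/\partial Y_\rho\bigr)(y_0,\ldots,y_m)\prod_{e\in\rho\setminus\tau}(v_{e,j_e}-y_e).$$
Since $V\subset V'$ is immediate, every non-zero $b_\rho:=\val\bigl((\partial^{|\rho|}g/\partial Y_\rho)(y_0,\ldots,y_m)\bigr)$ lies in the value group of $V$; also $\val(v_{e,j_e}-y_e)=\val(v_{e,j_e+1}-v_{e,j_e})=:\gamma'_{e,j_e}$ is monotone increasing in $j_e$ without a last value. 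Hence Lemma \ref{oe1}, with $\beta_\rho=b_\rho$ and all $t_e=1$, lets me choose $\nu_i<j_i<\lambda_i$ so that all the elements $b_\rho+\sum_{e\in\rho}\gamma'_{e,j_e}$ are pairwise different, and --- enlarging the $j_i$ --- so that for each $\tau$ with $(\partial^{|\tau|}g/\partial Y_\tau)(y_0,\ldots,y_m)\ne0$ the minimum of $b_\rho+\sum_{e\in\rho}\gamma'_{e,j_e}$ over $\rho\supseteq\tau$ is attained at $\rho=\tau$. For such $j_i$ one has $\val(\tilde c_\tau)=\min_{\rho\supseteq\tau}(b_\rho+\sum_{e\in\rho}\gamma'_{e,j_e})$ for every $\tau\ne\emptyset$, so these values are pairwise different; dividing by a coefficient $c$ of least value gives $g(y_0,\ldots,y_m)=c\,\tilde g(y_{0,j_0},\ldots,y_{m,j_m})$ with $\tilde g\notin\mm V[Y_{0,j_0},\ldots,Y_{m,j_m}]$, non-constant (because $g(y_0,\ldots,y_m)\ne g(v_{0,j_0},\ldots,v_{m,j_m})$) and with the values of the coefficients of all monomials of $\tilde g-\tilde g(0)$ pairwise different.

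It then remains to apply Lemma \ref{kat}. Given the ordinals $j_i<t_i<\lambda_i$ of the statement, Lemma \ref{kat} applied to $\tilde g$ yields ordinals $t_i\le t'_i<\lambda_i$ and $\tilde g_1\in V[Y_{0,t'_0},\ldots,Y_{m,t'_m}]$ with $\tilde g(y_{0,j_0},\ldots,y_{m,j_m})=\tilde g_1(y_{0,t'_0},\ldots,y_{m,t'_m})$ and $\tilde g_1=\tilde g_1(0)+c_rY_{r,t'_r}+\tilde g_2$, where $c_r\in V\setminus\{0\}$ and $\tilde g_2\in c_r\mm V[Y_{0,t'_0},\ldots,Y_{m,t'_m}]$. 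Then $g_1:=c\,\tilde g_1$ satisfies $g(y_0,\ldots,y_m)=g_1(y_{0,t'_0},\ldots,y_{m,t'_m})$ and $g_1=g_1(0)+(cc_r)Y_{r,t'_r}+c\tilde g_2$ with $cc_r\in V\setminus\{0\}$ and $c\tilde g_2\in cc_r\mm V[Y_{0,t'_0},\ldots,Y_{m,t'_m}]$, which is the asserted form after renaming the $t'_i$.

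The hard part is the choice of the $j_i$ in the previous paragraph --- in particular forcing the minimum of $b_\rho+\sum_{e\in\rho}\gamma'_{e,j_e}$ to occur at $\rho=\tau$ when $(\partial^{|\tau|}g/\partial Y_\tau)(y_0,\ldots,y_m)\ne0$ --- and, above all, the treatment of those $\tau\ne\emptyset$ for which $(\partial^{|\tau|}g/\partial Y_\tau)(y_0,\ldots,y_m)=0$. This phenomenon is absent from Lemma \ref{ka1}, where each $\partial^{|\tau|}g/\partial Y_\tau$ is a single monomial, but it can occur here because the $y_e$ need not be algebraically independent, and then $\val(\tilde c_\tau)$ is governed by a strictly larger $\rho$, which may force some $\val(\tilde c_\tau)$ and $\val(\tilde c_{\tau'})$ with $|\tau|,|\tau'|\ge2$ to coincide --- a coincidence not removed by the Taylor expansion alone, so that Lemma \ref{kat} cannot be invoked directly. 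Dealing with this last situation is the technical core of the proof; I expect it to require an iterated use of the monotonicity of the $\gamma'_{e,j_e}$ together with Lemmas \ref{oe1} and \ref{kat} in order to redistribute and finally separate the offending coefficients.
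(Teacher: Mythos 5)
Your reduction to Lemma \ref{kat} and the exactness of the two Taylor expansions for a multilinear $g$ are fine, but the argument has a genuine gap exactly at the step you yourself flag as ``the technical core'': you never prove that the $j_i$ can be chosen so that the coefficients $\tilde c_\tau$ have pairwise different values, and the mechanism you propose does not work in general. With choice (a) alone one only gets $\val(\tilde c_\tau)=\min_{\rho\supseteq\tau}\bigl(b_\rho+\sum_{e\in\rho}\gamma'_{e,j_e}\bigr)$, and two different $\tau,\tau'$ may share the same minimizing $\rho$, giving $\val(\tilde c_\tau)=\val(\tilde c_{\tau'})$; your remedy (b) --- forcing the minimum to sit at $\rho=\tau$ by ``enlarging the $j_i$'' --- is not available in general, because the sets $\{\gamma'_{e,j_e}\}$ are increasing but need not be cofinal (a pseudo convergent sequence can have nonzero breadth), so the inequality $b_\tau<b_\rho+\sum_{e\in\rho\setminus\tau}\gamma'_{e,j_e}$ need never hold; the ``no pseudo limit in $K$'' hypothesis rules this out only when $\rho\setminus\tau$ is a single index and $\partial^{|\rho|}g/\partial Y_\rho$ is constant, not in general. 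Worse, when $(\partial^{|\tau|}g/\partial Y_\tau)(y_0,\ldots,y_m)=0$ for some nonempty $\tau$ (which does occur, since the $y_e$ may be algebraically dependent --- compare $y_1=y_0^2$ in Lemma \ref{ka0}) there is no candidate minimum at $\rho=\tau$ at all, and you explicitly leave this case open. So the proposal is an unfinished plan rather than a proof.

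The paper closes exactly this gap by a different device which your argument lacks: duplicate the variables monomial by monomial, setting $G=\sum_{\tau}a_\tau\prod_{e\in\tau}Y_{e,\tau}$ with $\rho(G)=g$ and $y_{e,\tau}=y_e$. Every partial derivative of $G$ is then a single monomial, so Lemma \ref{ka1} applies verbatim (no stabilization or vanishing problems, since the value of a monomial in the $v_{e,j_e}$ is eventually constant), producing $G_1$ whose nonconstant coefficients have pairwise distinct values; Lemma \ref{kat} then yields the linear-dominant form, and finally one specializes the duplicated variables back via $\rho_1$. The point that makes the specialization harmless is that the resulting linear coefficient $c_r=\sum_{\tau\in S_r}c_{r,\tau}$ is a sum of elements of pairwise distinct values, hence nonzero with value the minimal $\val(c_{r,\tau})$. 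If you want to salvage your direct approach you would have to prove a separation statement replacing (b); as it stands, the needed distinctness of the $\val(\tilde c_\tau)$ is asserted, not established.
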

\begin{proof} 
 Fix $g$. Then $g$ has the form
$$g=\sum_{\tau\subset \{0,\ldots,m\}}a_{\tau }\Pi_{e\in \tau}Y_e,$$
$a_{\tau}\in V$.
Let $S_e$ be the set of all $\tau\subset  \{0,\ldots,m\}$ containing $e$, $0\leq e\leq m$. Introduce some new variables $(Y_{e,\tau})$, $0\leq e\leq m$, $\tau\in S_e$ and let $\rho:V[(Y_{e,\tau})]\to V[Y_0,\ldots,Y_m]$ be the map given by $Y_{e,\tau}\to Y_e$. Define
$$G=\sum_{\tau\subset \{0,\ldots,m\}}a_{\tau }\Pi_{e\in \tau}Y_{e,\tau}.$$
We have $\rho(G)=g$. Note that $y_{e,\tau}=y_e$ is a pseudo limit of $v_{e,\tau}=v_e$. Set  $y_{e,\tau,j_e}= (y_{e,\tau}-v_{e,\tau,j_e})/(v_{e,\tau,j_e+1}-v_{e,\tau,j_e})$.

Clearly,    $\partial G/\partial Y_{e,\tau}$ has at most one monomial for all $0\leq e\leq m$, $\tau\in S_e$ and we may apply Lemma \ref{ka1}. Thus for some  
ordinals $\nu_i<\lambda_i$, $0\leq i\leq m$  there exist  some ordinals $\nu_0<j_{0,\tau_0}<\lambda_0,$ $\tau_0\in S_0$, \ldots, $\nu_m<j_{m,\tau_m}<\lambda_m$, $\tau_m\in S_m$ and a polynomial $G_1\in V[(Y_{e,\tau_e,j_{e,\tau_e}})_{e,\tau_e}]$ such that 
$$G((y_{e,\tau_e})_{e,\tau_e})=G_1((y_{e,\tau_e,j_{e,\tau_e}})_{e,\tau_e})$$
 and the values of the coefficients of all monomials of $G_1-G_1(0)$ are different. Certainly, we may have  $j_{e,\tau_e}\not = j_{e,\tau'_e}$ for some $\tau_e\not = \tau'_e$ from $S_e$.

  Now  choose some ordinals $t_e<\lambda_e$ with $t_e>j_{e,\tau_e}$ for $0\leq e\leq m$, $\tau_e\in S_e$. By Lemma \ref{kat}
  there exist some ordinals $t_e\leq t'_e<\lambda_e$ and a polynomial $G_2\in  V[(Y_{e,\tau_e,t'_e})_{0\leq e\leq m, \tau_e\in S_e}]$ such that 
$$G_1((y_{e,\tau_e,j_{e,\tau_e}})_{e,\tau_e})=G_2((y_{e,\tau_e,t'_e})_{0\leq e\leq m,\tau_e\in S_e})$$  
 and  $G_2=G_2(0)+c_{r,\tau_r}  Y_{r,\tau_r,t'_r}+G_3,$
for some  $r\in \{0,\ldots,m\}$, $\tau_r\in S_r$, $c_{r,\tau_r}\in V\setminus \{0\}$ and $G_3\in  c_{r,\tau_r}\mm  V[(Y_{e,\tau_e,t'_e})_{0\leq e\leq m, \tau_e\in S_e}]$.

 Applying to $G_2$ the map $\rho_1:V[(Y_{e,\tau_e,t'_e})]\to V[(Y_{e,t'_e})]$ induced by $\rho$  we see that \\
   $g_1= \rho_1(G_2)$ satisfies
$$g(y_0,\ldots,y_m)=G_2((y_{e,\tau_e,t'_e})_{0\leq e\leq m, \tau_e\in S_e})=g_1(y_{0,t'_0},\ldots, y_{m,t'_m})$$
 and  $g_1=g_1(0)+c_r  Y_{r,t'_r}+g_2,$
 for $c_r=\sum_{\tau_r\in S_r} c_{r,\tau_r}$ and some $g_2\in  c_r\mm V[Y_{0,t'_0},\ldots, Y_{m,t'_m}]$. Note that $\val(c_r)$ is $\val(c_{r,\tau_r})$, which is the minimum among $\val(c_{r,\tau'_r})$, $\tau'_r\in S_r$.
 \hfill\ \end{proof}

 \section{Pure transcendental, immediate extensions of valuation rings}

Using the proofs of Lemmas \ref{ka1}, \ref{kat}, \ref{ka2} we get the following proposition.

\begin{Proposition} \label{p}
Let $V \subset V'$ be an  immediate extension of valuation rings,  $K, K'$ the fraction fields of $V, V'$, $\mm$ the maximal ideal of $V$ and  $y\in K'$ an element  which is not in  $K$. Assume that  $y$ is  a pseudo limit of a pseudo convergent sequence $v=(v_j)_{1\leq j<\lambda}$ over $V$, which has no pseudo limit in $K$.
 Set $y_j=(y-v_j)/(v_{j+1}-v_j)$.
Then for every nonzero polynomial  $g =\sum_{e=0}^m a_e Y^e\in V[Y]$ such that  $a_e=0$ for any $e>o$ multiple of $p$ if char\ $K=p>0$,  and every ordinal $1\leq \nu<\lambda$,  there exist some  $\nu<t<\lambda$ and a polynomial $g_1\in V[Y_t]$ such that 
$g(y)=g_1(y_t)$
 and    $g_1=g_1(0)+c  Y_t+g_2,$
for some $c\in V\setminus \{0\}$ and  $g_2\in  c\mm Y_t^2 V[Y_t]$. 
\end{Proposition}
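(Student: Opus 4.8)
The strategy is to reduce to the single-variable situation already handled by the mechanism of Lemmas \ref{ka1}, \ref{kat}, \ref{ka2}, but exploiting that now we only have one pseudo convergent sequence and one variable $Y$. First I would write $g = \sum_{e=0}^m a_e Y^e$ and perform a Taylor expansion around $v_j$:
\[
g(y) = \sum_{e=0}^m \frac{g^{(e)}(v_j)}{e!}(y-v_j)^e,
\]
which makes sense even in characteristic $p>0$ because the hypothesis $a_e=0$ for $e\in(p)$ guarantees that the binomial/multinomial coefficients appearing are units (equivalently, no term $Y^{pk}$ occurs, so all derivatives are legitimate and the relevant coefficients do not vanish modulo $p$). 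Substituting $(y-v_j)=(v_{j+1}-v_j)y_j$ turns this into $g(y)=\sum_{e=0}^m \beta_{e,j}(v_{j+1}-v_j)^e y_j^e$, where $\beta_{e,j}=g^{(e)}(v_j)/e!$. Because $v$ has no pseudo limit in $K$, the minimal degree of a polynomial $h\in V[Y]$ with $\val(h(v_j))<\val(h(v_{j+1}))$ eventually is $\geq 2$ (as in the proof of \cite[Theorem 3]{Kap}, already invoked in Lemma \ref{ka0}), so for $e\geq 1$ the value $\val(\beta_{e,j})$ is constant for $j$ large, say equal to $\beta_e$; set $\gamma_j=\val(v_{j+1}-v_j)=\val(y-v_j)$, which is strictly increasing.

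Next I would apply Lemma \ref{o} (Ostrowski) to the single monotone increasing set $\{\gamma_j\}$ with the constants $\beta_1,\dots,\beta_m$ and the distinct positive integers $1,2,\dots,m$: there is $\nu'<\lambda$ so that for $j>\nu'$ the values $\beta_e+e\gamma_j$, $1\leq e\leq m$, are all distinct, and there is a unique index $r$ achieving the minimum among them. As in Lemmas \ref{ka0}, \ref{ka1}, $\val(g(y))$ is constant in $j$ while all the $\beta_e+e\gamma_j$ increase, so $\val(g(y))\neq\beta_e+e\gamma_j$ for each $e\geq 1$ and $j$ large. Two cases arise exactly as before: either $\val(g(y))=\val(g(v_j))<\beta_e+e\gamma_j$ for all $e$, in which case $c=g(v_j)$ and $g_1$ has $g_1-1\in \mm Y_j V[Y_j]$; or $\val(g(v_j))$ equals the minimum $\beta_r+r\gamma_j$, in which case I take $c=\beta_{r,j}(v_{j+1}-v_j)^r$ (up to the unit $\beta_{r,j}/c_{\text{normalizing}}$), so that $g(y)=c\,g_1(y_j)$ with the coefficient of $Y_j^r$ a unit and every other coefficient of $g_1-g_1(0)$ of strictly larger value.

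At this stage I have $g(y)=c\,g_1(y_j)$ with a distinguished monomial of degree $r$, but the conclusion asks for the \emph{linear} coefficient $cY_t$ to be the dominant one and everything else in $c\mm Y_t^2 V[Y_t]$. The final step is therefore the analogue of Lemma \ref{kat}: pick $t>j$, use $y_j = d + b\,y_t$ with $d\in V$ a unit and $b=(v_{t+1}-v_t)/(v_{j+1}-v_j)\in\mm$, substitute into $g_1$, and observe that in the resulting polynomial a monomial of degree $s$ contributes a coefficient of value at least $\val(c)+s\cdot\val(b)+\text{const}$, so that after possibly increasing $t$ (applying Ostrowski once more to separate the finitely many linear coefficients, exactly as in Lemma \ref{kat}) the linear term dominates all higher-degree terms, giving $g_1 = g_1(0) + cY_t + g_2$ with $g_2\in c\mm Y_t^2 V[Y_t]$. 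I expect the main obstacle to be bookkeeping in characteristic $p>0$: one must check carefully that the hypothesis $a_e=0$ for $e\in(p)$ is exactly what is needed so that neither the Taylor coefficients $g^{(e)}(v_j)/e!$ become meaningless nor the degree-$r$ coefficient degenerates modulo $p$, and that the $\val(\beta_{e,j})$ really do stabilize — this is the only place where the ambient characteristic intervenes, and it is what forces the restriction on the exponents in the statement.
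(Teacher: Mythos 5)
There is a genuine gap at the very first step of your plan, and it is precisely the difficulty that the paper's proof is built to circumvent. You Taylor-expand $g$ itself around $v_j$ and claim that for $e\geq 1$ the values $\val\bigl((D^{(e)}g)(v_j)\bigr)$ are eventually constant ``because $v$ has no pseudo limit in $K$, so the minimal degree of a polynomial with increasing values at $v_j$ is $\geq 2$''. That fact only forces stabilization for polynomials of degree $\leq 1$; for $1\leq e\leq m-2$ the Hasse derivative $D^{(e)}g$ has degree $m-e\geq 2$, and in the algebraic case (the essential one, cf.\ the reduction in Lemma \ref{1}) nothing prevents $D^{(e)}g$ from being divisible by a minimal-degree polynomial witnessing algebraicity, so $\val((D^{(e)}g)(v_j))$ may keep increasing. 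Then your constants $\beta_e$ do not exist and Lemma \ref{o} cannot be applied to the quantities $\beta_e+e\gamma_j$. The paper's proof avoids exactly this by replacing $g$ with $G=a_0+\sum_{e\geq 1} a_eY_e^e$, one fresh variable per monomial, so that every Hasse derivative of $G$ in each $Y_e$ is a single monomial whose value at $v_{e,j_e}$ stabilizes for the trivial reason that $\val(v_j)$ does ($0\in K$ is not a pseudo limit); different indices $j_e$ are allowed for different $e$ and are only re-synchronized to a common $t$ at the end, after which the map $\rho_1$ collapses the variables, the ``all coefficient values different'' bookkeeping (via Lemma \ref{oe1}) being what guarantees that the collapsed linear coefficient $c$ is nonzero of the expected value. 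This multi-variable device is the heart of the argument and is absent from your proposal.

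A second problem concerns characteristic $p$. Even with $a_e=0$ for $e\in(p)$, the Taylor expansion of $g$ does contain Hasse terms of order divisible by $p$ (for instance $D^{(p)}(Y^{p+1})=Y\neq 0$), so your parenthetical claim that ``no term $Y^{pk}$ occurs'' in the expanded polynomial is false; the hypothesis does not make those derivatives vanish, it only ensures the first-order coefficients survive. Consequently the distinguished monomial of degree $r$ selected by your Ostrowski step may have $r\in(p)$; under the substitution $Y_j=d+bY_t$ it then contributes nothing to the linear coefficient, and the claimed dominance of $cY_t$ over the degree-$\geq 2$ part can fail. In the paper the hypothesis is used differently: it guarantees that in each separate variable $Y_{e,j_e}$ the linear coefficient $q_{e,1}$ (essentially $e\,a_e$ times a unit) is nonzero, and the per-variable reductions in the style of Lemma \ref{kat}, together with the ``different values'' control, then make the linear terms dominate. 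So both the stabilization step and the char-$p$ handling in your plan need the paper's monomial-splitting construction; as written the proposal does not go through.
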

\begin{proof} 
 Fix $g$ and $\nu$.
 Introduce some new variables $(Y_{e})$, $1\leq e\leq m$ and 
  define
$$G=a_0+\sum_{e=1}^m a_e  Y_{e}^e.$$
 Let $\rho:V[(Y_{e'})]\to V[Y]$ be the map given by $Y_{e'}\to Y$. 
We have $\rho(G)=g$. Note that $y_{e}=y$ is a pseudo limit of $v_{e,j_{e}}=v_{j_{e}}$, $1\leq j_{e}<\lambda$. Set $y_{e,j_{e}}=(y_{e}-v_{e,j_{e}})/(v_{e,j_{e}+1}-v_{e,j_{e}})$.

Consider the Taylor expansion \footnote{The polynomials $D^{(n_1,\ldots,n_m)}f \in R[Y_1,\ldots,Y_m]$ for 
$f=\sum_{n_1,\ldots, n_m} a_{n_1,\ldots, n_m} Y_1^{n_1}\cdots Y_m^{n_m} \in R[Y_1,\ldots,Y_m]$ make sense for any ring $R$: indeed, one constructs the Taylor expansion in the universal case
$R = {\bf Z}[(a_{n_1,\ldots, n_m})]$ by using the equality $n_1!\cdots n_m! \cdot (D^{((n_1,\ldots,n_m)} f) = \partial f/\partial (Y_1^{n_1},\ldots, Y_m^{n_m})$.}
\[
\tst  G(y) = \sum_{e = 0}^{m}\sum_{n_e=0}^e (D^{(n_e)}G)(v_{e,j_e}) \cdot (y_e - v_{e,j_e})^{n_e}\qxq{with}  D^{(n_e)}G \in V[Y_e]
\]
given by $\partial G/\partial Y_e^{n_e}$. Clearly,    $D^{(n_e)}G$ has just one monomial
 and we  follow the proof of  Lemma \ref{ka1}.

Since the value of a monomial in $(v_{e,j_{e}})$ is constant for $j_{e}$ large
we conclude   that $\beta_{e,n_e}=\val((D^{(n_e)}G)(v_{e,j_e})) $, $1\leq e\leq m$,
$0\leq n_e\leq  e$ are constant for $j_{e}>\nu$ large. As $v_{e,j_e}=v_{j_e}$ we see that $j_e$ does not depend really on $e$ but this will not affect us. Note that $\partial G/\partial (Y_{e}, Y_{e'})=0$ when $e\not =e'$. Set $\gamma_{e,j_{e}}'=\val(v_{e,j_{e}+1}- v_{e,j_{e}})=\val(y_{e}- v_{e,j_{e}})$ (in fact this is $\val(y-v_{j_{e}}))$.  

 By Lemma \ref{oe1} applied to $(\beta_{n_e})$, $(\gamma_{e,j_{e}}')$,  we see that 
$$ \gamma_{e,n_e}=\val ( (D^{(n_e)}G)(v_{e,j_e}) \cdot (v_{e,j_e+1} - v_{e,j_e})^{n_e})$$
 are different for all  $1\leq e\leq m$, $0\leq n_e\leq e$ with $a_{e}\not = 0$, for some $(j_{e})$, $j_e>\nu$.

Changing from $ Y_{e}$ to $ Y_{e,j_{e}}$ by $Y_e\to ((v_{e,j_e+1}-v_{e,j_e})Y_{e,j_e}+v_{e,j_e})$ we get a polynomial $G_1\in V[ (Y_{e,j_{e}})]$ 
$$G_1=\sum_{e = 0}^{m}\sum_{n_e=0}^e (D^{(n_e)}G)(v_{e,j_e}) \cdot (v_{e,j_e+1} - v_{e,j_e})^{n_e}Y_{e,j_e}^{n_e}$$
such that 
 $G( (Y_{e}))=G_1((Y_{e,j_{e}}))$ and 
the values of the nonzero coefficients of  all monomials  of $G_1-G_1(0)$ are different. 

  Assume that $G_1-G_1(0)= \sum_{e=1}^m \sum_{r=1}^e q_{e,r} Y_{e,j_e}^r$ for some $q_{e,r}\in V$ of different values. In characteristic zero we see that $q_{e,1}\not =0$ for all $e$. If char\ $K=p>0$ and $a_e=0$ for all $e$ multiple of $p$ as it is assumed,  we  have again $q_{e',1}\not =0$ for all $e'$. Note that our assumption does not imply that in $G_1$ there are no monomials of degree multiples of $p$, but this does not affect us.

Now we follow the proof of Lemma \ref{kat}.
We change  $Y_{e,j_{e}}$ by $ Y_{e,j'_{e}}$  for some $j_{e}<j'_{e}<\lambda$ for all $e$. We see that this means to  replace  $Y_{e,j_{e}}$ by $d_{e,j'_{e}}+
b_{e,j'_{e}} Y_{e,j'_{e}}$ for some units $d_{e,j'_{e}}\in V$ and some $b_{e,j'_{e}}\in \mm$ in $ G_1$ obtaining a polynomial $G_2\in V[(Y_{e,j'_{e}})_{e})]$ such that $G_1((y_{e,j_{e}}))=G_2((y_{e,j'_{e}}))$. 

Since the values  of the coefficients of  all monomials  of $G_1-G_1(0)$ are different we see as in the proof  of Lemma \ref{kat} that  the values of the coefficients of all monomials $Y_{e,j_e}^r$, $r>1$ of $G_2$ 
   are strictly bigger than the minimal  value of the coefficients of $Y_{e,j_e}$. Indeed, a monomial  
$c\Pi_{e=1}^s Y_{e,j_e}$, $s>0$  of $ G_1$ goes by the above transformation   in the polynomial 
$P=c\Pi_{e=1}^s(d_{e,j'_e}+b_{e,j'_e} Y_{e,j'_e})$,
 where      the coefficients of monomials of degree $>1$ involve   products  of at least two elements $b_{e,j'_{e}}$. We remind that $q_{e,1}\not = 0$ for all $e$.
 
  Actually, we can choose  $j'_e$ such that we have still different the values of all coefficients of monomials of $G_2-G_2(0)$ using again Lemma \ref{oe1} (we can change directly from  $Y_e$ to  $Y_{e,j'_e}$). Changing $(j_e)$ by $(j'_e)$ we may assume that  the values  of the coefficients of  all monomials  of $G_1-G_1(0)$ are different and $G_1-G_1(0)=\sum_{e=1}^m c_e Y_{e,j_e}+G'_1$ for some $c_e\in V\setminus \{0\}$ and 
	$G'_1\in  (c_1,\ldots,c_m)\mm ((Y_{e',j_{e'}}Y_{e'',j_{e''}}))V[(Y_{e',j_{e'}})]$.

Certainly, $(c_e)$ have different values but we would like to have $(c_e/(v_{e,j_e+1}-v_{e,j}))$ of different values. For this aim we will increase $(j'_e)$ to some $(j_e'')$ with the help of Lemma \ref{oe1} such that the elements $(c_e\Pi_{e'=1,e'\not =e}^m (v_{e',j''_{e'}+1}-v_{e',j''_{e'}}))$  have different values. Here it is important that $(c_e)$ have different values. Then $(c_e/(v_{e,j''_e+1}-v_{e,j''_e}))$ have different values too.

We change  $Y_{e',j_{e'}}$ by $Y_{e',j''_{e'}}$    for all $e'$. This means to  replace  $Y_{e',j_{e'}}$ by $d_{e',j''_{e'}}+
b_{e',j''_{e'}} Y_{e,j''_{e'}}$ for some units $d_{e',j''_{e'}}\in V$ and some $b_{e',j''_{e'}}\in \mm$ in $ G_1$ obtaining a polynomial $G_2\in V[(Y_{e',j''_{e'}})_{e'})]$ such that $G_1((y_{e',j_{e'}}))=G_2((y_{e',j''_{e'}}))$.
 The values of the coefficients of $G_2$ are possible not different anymore, but as we have seen the values of the monomials  $Y_{e,j_e''}^r$, $r>1$ of $G_2$   are strictly bigger than the minimal  value of the coefficients of $Y_{e,j''_e}$, $1\leq e\leq m$.

Now we change  $Y_{e',j''_{e'}}$ by $Y_{e',t}$   for some $t$ with $j''_{e'}<t<\lambda$ for all $e'$. This means to  replace  $Y_{e',j''_{e'}}$ by $d_{e',t}+
b_{e',t} Y_{e,t}$ for some units $d_{e',t}\in V$ and some $b_{e',t}\in \mm$ in $ G_2$ obtaining a polynomial $G_3\in V[(Y_{e',t})_{e'})]$ such that $G_2((y_{e',j''_{e'}}))=G_3((y_{e',t}))$.

 Actually 
$$b_{e,t}=(v_{e,t+1}-v_{e,t})/(v_{e,j''_{e}+1}-v_{e,j''_{e}})=
(v_{t+1}-v_{t})/(v_{e,j''_{e}+1}-v_{e,j''_{e}}).$$
Thus the coefficients of the variables $Y_{e,t}$ in $ G_3 $ have the form \\
$(c_e/(v_{e,j''_e+1}-v_{e,j''_e}))(v_{t+1}-v_{t})$ and so they have different values.
 Then $G_3-G_3(0)$ has the form  $c'_e Y_{e,t}+ G_3'$ for some $e$, $c'_e\in V\setminus\{0\}$ and $G_3'\in c'_e\mm V[(Y_{e,t})_{e})]$.
 
Applying to $G_3$ the map $\rho_1:V[(Y_{e,t})_{e}]\to V[(Y_{t})]$ induced by $\rho$  we see that
   $g_1= \rho_1(G_3)$ satisfies
$g(y)=G_3((y_{e,t})_{e})=g_1(y_{t})$
 and  
$$g_1=g_1(0)+c Y_t+g_2,$$
 for  some $c\in  V\setminus \{0\}$  of value $\val(c'_e)$ 
 and  some $g_2=\rho_1(G'_3)\in  c\mm Y_t^2 V[Y_t]$.  Note that $c$ is the sum of the  coefficients of some  $Y_{e',t}$, which have  different values. 
 \hfill\ \end{proof}

\begin{Corollary} \label{co}
Let $V \subset V'$ be an  immediate extension of valuation rings,  $K, K'$ the fraction fields of $V, V'$, $\mm$ the maximal ideal of $V$ and  $y\in K'$ an  element  which is not in  $K$. 
Assume that char\ $K=p>0$ and $y$ is  a pseudo limit of a pseudo convergent sequence $v=(v_j)_{1\leq j<\lambda}$ over $V$, which  has no pseudo limit in $K$.
 Set $y_j=(y-v_j)/(v_{j+1}-v_j)$.
Then for every nonzero  polynomial  $g\in V[Y]$  and every ordinal $1\leq \nu<\lambda$ one of the following statements holds.

\begin{enumerate}
\item  There exist some  $\nu<t<\lambda$ and a polynomial $g_1\in V[Y_t]$ such that 
$g(y)=g_1(y_t)$
 and    $g_1=g_1(0)+c  Y_t+g_2,$ 
for some $c\in V\setminus \{0\}$ and  $g_2\in  c\mm Y_t^2 V[Y_t]$.
\item 
There exist some  $\nu<t<\lambda$ and a polynomial $g_1\in V[Y_t]$ such that 
$y g(y)=g_1(y_t)$
 and    $g_1=g_1(0)+c  Y_t+g_2,$ 
for some $c\in V\setminus \{0\}$ and  $g_2\in  c\mm Y_t^2 V[Y_t]$.
\end{enumerate}
\end{Corollary}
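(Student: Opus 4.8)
The plan is to deduce this from Proposition~\ref{p} by splitting off the monomials of $g$ whose exponent lies in $(p)$ and treating them through the expansion, valid in characteristic $p$, of the substitution $y=v_t+(v_{t+1}-v_t)y_t$. Write $g=g'+g''$ with $g'=\sum_{e\notin(p)}a_eY^e$ and $g''=\sum_{e\in(p)}a_eY^e=\tilde g(Y^p)$ for some $\tilde g\in V[Z]$; then Proposition~\ref{p} applies to $g'$ as soon as $g'\neq0$. For $g''$ the key point is that in characteristic $p$
\[
y^p=v_t^p+(v_{t+1}-v_t)^p\,y_t^p,\qquad y_t^p=\frac{y^p-v_t^p}{v_{t+1}^p-v_t^p},
\]
so $(v_t^p)_{t<\lambda}$ is again pseudo convergent with pseudo limit $y^p$, having $y_t^p$ as its associated quotient; Taylor expanding $\tilde g$ at $v_t^p$ then gives
\[
g''(y)=\tilde g(v_t^p)+\sum_{i\ge1}(v_{t+1}-v_t)^{pi}\,(D^{(i)}\tilde g)(v_t^p)\,y_t^{pi},
\]
a polynomial in $y_t$ whose nonconstant monomials are all divisible by $Y_t^{p}$ and carry the factor $(v_{t+1}-v_t)^{pi}$ with $pi\ge p\ge2$.

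Two cases are immediate. If $g''$ is constant, then $g=g'+a_0$ and Proposition~\ref{p} applied to $g'$, absorbing $a_0$ into $g_1(0)$, gives statement~(1); if $g'=0$, then $yg(y)=h(y)$ with $h(Y)=Y\tilde g(Y^p)$, all of whose exponents are $\equiv1\pmod p$ and hence avoid $(p)$, and Proposition~\ref{p} applied to $h$ gives statement~(2). In general there are two cases. \emph{(a)} Apply Proposition~\ref{p} to $g'$, writing $g'(y)=g_1'(y_t)$ with $g_1'=g_1'(0)+cY_t+g_2'$, $c\in V\setminus\{0\}$, $g_2'\in c\mm Y_t^2V[Y_t]$; adding the expansion of $g''(y)$ gives $g(y)=g_1(y_t)$ with $g_1=\big(g_1'(0)+\tilde g(v_t^p)\big)+cY_t+g_2$, and statement~(1) follows once each $(v_{t+1}-v_t)^{pi}(D^{(i)}\tilde g)(v_t^p)$ has value $>\val(c)$. \emph{(b)} Otherwise replace $g$ by $yg$: the bad part of $yg$ is then $Y\cdot\big(\text{the part of }g'\text{ with exponents}\equiv-1\bmod p\big)$, the good part $(yg)'$ collects $yg''$ together with the rest of $yg'$ and has no exponent in $(p)$ (as $yg$ has no constant term), and one repeats~(a) with $(yg)'$ in place of $g'$ to obtain statement~(2).

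The substance is to show that at least one of~(a),~(b) succeeds. Following the proof of Proposition~\ref{p}, the linear coefficient $c$ it returns satisfies $\val(c)=\val(v_{t+1}-v_t)+u$, where $u$ is eventually constant in $t$ and equals, up to a fixed additive constant, the minimum of $\val(a_e)+(e-1)\val(v_t)$ over the exponents $e$ occurring in $g'$ (resp.\ in $(yg)'$); meanwhile the coefficients coming from $g''$ have value $pi\cdot\val(v_{t+1}-v_t)+w_i$ with $w_i$ eventually non-decreasing. Put $x_t=\val(v_{t+1}-v_t)=\val(y-v_t)$; it is strictly increasing, $\val(v_t)$ is eventually constant (say $=\alpha$), and $x_t>\alpha$ for $t$ large (as $x_t<\alpha$ would force $\val(v_{t+1})=x_t\neq\alpha$). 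The difference of the two values is $(pi-1)x_t+(w_i-u)$ with $pi-1\ge1$. If $(x_t)$ is cofinal in $\Gamma$ this tends to $+\infty$ and~(a) works outright; otherwise $(x_t)$ converges to some $\delta>\alpha$, and since $pi-1\ge1$ and $x_t\nearrow\delta$ it suffices to compare the eventually-constant quantities—which reduces to whether the minimum defining $u$ is attained on a monomial of $g$ with exponent $\notin(p)$ (then~(a) holds) or $\in(p)$ (then it is a nonconstant bad monomial of $g$, which becomes good after multiplication by $y$, so that it enters $(yg)'$ and makes~(b) hold). The main obstacle is exactly this last point: tracking carefully enough, through the proof of Proposition~\ref{p}, how $\val(c)$ depends on $t$ and on the coefficients of $g$ so as to carry out the comparison; the rest is formal.
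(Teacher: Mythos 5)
Your overall strategy is the same as the paper's: split $g$ into the prime-to-$p$ part $g'$ and the part $g''\in V[Y^p]$, apply Proposition~\ref{p} to $g'$ (with the two easy cases $g''$ constant and $g'=0$), and, when the $p$-part dominates, multiply by $y$ so that $Yg''$ has no exponents in $(p)$ and apply Proposition~\ref{p} to it to get statement (2). The gap is precisely in the step you yourself flag as the ``main obstacle'': the valuation comparison that decides between (a) and (b) is not established, and the argument you sketch for it does not work as stated. First, the description of $u$ as (up to a constant) $\min_e\{\val(a_e)+(e-1)\val(v_t)\}$ over the exponents of $g'$ is wrong for an algebraic pseudo convergent sequence: the dominant Taylor term of a monomial $a_eY^e$ need not be the first-order one, so the correct constant involves a minimum over all derivative orders evaluated at the intermediate indices fixed in the proof of Proposition~\ref{p}. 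Second, the dichotomy ``$(x_t)$ cofinal or $(x_t)$ converges to some $\delta$'' is meaningless in a general value group (a bounded strictly increasing sequence in an ordered abelian group has no limit), and even granting a bound, the sign of $(pi-1)x_t+(w_i-u)$ for large $t$ is not decided by ``comparing the eventually-constant quantities'': it depends on where the (possibly nonexistent) supremum of $x_t$ sits, and your proposed criterion -- whether ``the minimum defining $u$'' is attained on an exponent in $(p)$ or not -- does not even typecheck, since $u$ is a minimum over the prime-to-$p$ exponents only, and the contributions of $g'$ and $g''$ scale differently in $t$ ($x_t$ versus $pi\,x_t$), so no single minimum over monomials controls the comparison. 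Third, in case (b) you assert but do not prove that the leftover bad part of $yg$ (from monomials of $g'$ with $e\equiv-1 \bmod p$) is dominated by the new linear coefficient, nor that the remainder lands in $c\mm Y_t^2V[Y_t]$; this is exactly where the standing hypothesis that the $p$-part persistently dominates must be invoked.

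For comparison, the paper avoids all coefficientwise bookkeeping at this point: it compares $\val(c'_1)$ with $\val(h_2(y_j)-h_2(0))$ as elements of $V'$; if they are equal it increases the index, using that $h_2\in V[Y_j^p]$ forces the oscillation of the $p$-part to grow at least $p$ times as fast as the linear coefficient, reducing to the strict inequality and hence (1); and in the persistent case $\val(c'_1)>\val(h_2(y_{j'})-h_2(0))$ it applies Proposition~\ref{p} to $Yf_2$ alone and dominates the transform of $Yf_1$ directly ``by the above assumption'', giving (2). Reorganizing your case analysis along these lines (elementwise values plus the factor-$p$ growth of the $V[Y^p]$ part, rather than a limit of $x_t$ and a minimum over monomials) is what is needed to close the argument.
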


\begin{proof} 

If $g =\sum_{e=0}^m a_e Y^e\in V[Y]$ such that  $a_e=0$ for any $e$ multiple of $p$ if char $K=p>0$ then  apply Proposition \ref{p} and we get (1). 
	Suppose that  $g=f_1+f_2$ with $f_2\in V[Y^p]$ and $f_1=\sum_{e>0, a_e=0\ \mbox{if\ e\ is\ a\ multiple\ of\ p}}^m a_e Y^e\in V[Y]$. If $f_1=0$ then $Yg=Yf_2$ has no monomials in $V[Y^p]$ and applying Proposition \ref{p} to $Yg$ we get (2). 
	
	Assume that $f_1\not =0$.
	By Proposition \ref{p} we see that changing from $Y$ to some $Y_j $, $j>\nu$ we get $h\in V[Y_j]$ such that $h(y_j)=g(y)$ and $h=h_1+h_2$ with $h_2\in V[Y_j^p]$ and $h_1-h_1(0)=c'_1Y_j+h'_1$ with $h'_1\in c'_1\mm Y_j^2 V[Y_j]$. If $\val(c'_1)<\val(h_2(y_j)-h_2(0))$ then we get again (1) for some $c_1\in V$ with $\val(c_1)=\val(c'_1)$. If $\val(c'_1)=\val(h_2(y_j)-h_2(0))$ then we reduce to the above inequality because we may change from $Y_j$ to $Y_{j'}$ 
 for some $j'>j$ (note that $h_2\in V[Y_j^p]$). Thus we obtain again  (1).
	
	Now assume that $\val(c_1)>\val(h_2(y_j)-h_2(0))$ and   this inequality remains if we change as above from $Y_j$ to $Y_{j'}$ for any $j<j'<\lambda$.
	 Then note that in $Y f_2$ the  coefficients of monomials from $V[Y_j^p]$  are zero and we may apply  Proposition \ref{p} for $Yf_2$. Changing from $ Y$ to $Y_t$ for some $j<t<\lambda$ we get a polynomial $H_2\in V[Y_t]$ from $Y f_2$  such that $yf_2(y)=H_2(y_t)$ and $H_2-H_2(0)=c_2'Y_t+H'_2$ with $H'_2\in c_2'\mm V[Y_t]$. Under this transformation $Y g$ goes in a polynomial $H\in V[Y_t]$ and $Y f_1$ goes in a polynomial $H_1$ with $\val(H_1-H_1(0))> \val(c_2')$ by our above assumption (certainly $H_1$ could contain also some monomials from $V[Y^p]$).  Thus $yg(y)=H(y_t)$ and 
 $H-H(0)=c_2Y_t+H'$
for some $c_2\in V$ of value $\val(c'_2)$ and $H'\in c_2\mm Y_t^2 V[Y_t]$, that  is (2) holds. 
\hfill\ \end{proof}

This corollary will be applied in Lemma \ref{1} which gives in fact a small part of the proof of Theorem
\ref{key}.
In this theorem we  need a proposition as above  for polynomials $g$ in two variables. This is done below but the proof is more difficult. Actually we believe that the above proposition holds  for polynomials $g$ in more variables.

\begin{Proposition} \label{p'}
Let $V \subset V'$ be an  immediate extension of valuation rings,  $K, K'$ the fraction fields of $V, V'$, $\mm$ the maximal ideal of $V$ and  $y_1,y_2\in K'$ two elements,  which are not in $K$. Assume that $y_i$, $i=1,2$ is  a pseudo limit of a pseudo convergent sequence $v_i=(v_{i,j})_{1\leq j<\lambda_i}$, $i=1,2$ over $V$, which   have no pseudo limit in $K$.
 Set $y_{ij}=(y_i-v_{i,j})/(v_{i,j+1}-v_{i,j})$.
Then for every nonzero polynomial   $g=\sum_{e_1,e_2=0}^m a_{e_1,e_2} Y_1^{e_1} Y_2^{e_2} \in V[Y_1,Y_2]$, such that $a_{e_1,e_2}=0$ if char\ $K=p>0$ and at least one of $e_1,e_2$ is a multiple of  $p$,  and every ordinals $1\leq \nu_i<\lambda_i$,  $i=1,2$  there exist some  $\nu_i<t_i<\lambda_i$, $i=1,2$ and a polynomial $g_1\in V[Y_{1,t_1},Y_{2,t_2}]$ such that 
$g(y_1,y_2)=g_1(y_{1,t_1},y_{2,t_2})$
 and  $g_1=g_1(0)+c_1  Y_{1,t_1}+c_2 Y_{2,t_2}+g_2,$
for some   $c_1,c_2\in V, $ at least one of them nonzero, and $g_2\in  (c_1,c_2)\mm (Y_{1,t_1},Y_{2,t_2})^2 V[Y_{1,t_1},Y_{2,t_2}]$.   
\end{Proposition}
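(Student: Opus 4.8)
The plan is to mimic the proof of Proposition \ref{p}, replacing its single family of renormalized variables by two families, one attached to $y_1$ and one to $y_2$, and to carry along the two-dimensional grid of exponents $(n_1,n_2)$ coming from a monomial $Y_1^{e_1}Y_2^{e_2}$.

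First I would fix $g$ and $\nu_1,\nu_2$ and separate the monomials of $g$: for every index $\tau=(e_1,e_2)$ with $a_{\tau}\neq 0$ introduce fresh variables $Y_{1,\tau},Y_{2,\tau}$, set $G=\sum_{\tau}a_{\tau}Y_{1,\tau}^{e_1}Y_{2,\tau}^{e_2}$, and let $\rho\colon V[(Y_{i,\tau})]\to V[Y_1,Y_2]$ be the collapse $Y_{i,\tau}\mapsto Y_i$, so that $\rho(G)=g$; each $Y_{i,\tau}$ is assigned the pseudo limit $y_i$ and the sequence $v_i$, with the freedom to be evaluated at an ordinal $j_{i,\tau}$ depending on $\tau$. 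As in Lemma \ref{ka2} and Proposition \ref{p}, the effect of this separation is that for each $\tau$ every divided partial derivative $D^{(n_1,n_2)}_{Y_{1,\tau},Y_{2,\tau}}G$ with $(n_1,n_2)\neq(0,0)$ is a single monomial, while all mixed derivatives in variables belonging to two distinct $\tau$'s vanish.

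Next, using the divided-power Taylor expansion around the points $(v_{1,j_{1,\tau}},v_{2,j_{2,\tau}})$ and substituting $y_i-v_{i,j_{i,\tau}}=(v_{i,j_{i,\tau}+1}-v_{i,j_{i,\tau}})y_{i,j_{i,\tau}}$, I rewrite $G$ as a polynomial $G_1$ in the renormalized variables; the coefficient of $Y_{1,j_{1,\tau}}^{n_1}Y_{2,j_{2,\tau}}^{n_2}$ coming from the $\tau$-monomial has value $\beta_{\tau,n_1,n_2}+n_1\val(v_{1,j_{1,\tau}+1}-v_{1,j_{1,\tau}})+n_2\val(v_{2,j_{2,\tau}+1}-v_{2,j_{2,\tau}})$, where the stable part $\beta_{\tau,n_1,n_2}=\val\bigl(a_{\tau}\binom{e_1}{n_1}\binom{e_2}{n_2}v_{1,j_{1,\tau}}^{e_1-n_1}v_{2,j_{2,\tau}}^{e_2-n_2}\bigr)$ is constant for $j_{i,\tau}$ large (the value of a monomial in the $v$'s is eventually constant, and the single-monomial property excludes cancellation). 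Applying Lemma \ref{oe1} to the finitely many stable parts $\beta_{\tau,n_1,n_2}$, the increasing sequences $j\mapsto\val(v_{i,j+1}-v_{i,j})$ and the bounded coefficients $n_1,n_2$ (together with Ostrowski's Lemma \ref{o} for one sequence at a time, as in the proof of Proposition \ref{p}), I choose ordinals $\nu_i<j_{i,\tau}<\lambda_i$ so that all these coefficient values become pairwise distinct. Here the hypothesis that in characteristic $p$ neither $e_1$ nor $e_2$ belongs to $(p)$ is used precisely to ensure $\binom{e_i}{1}=e_i\neq 0$ in $V$, so that the linear monomials $Y_{i,j_{i,\tau}}$ occur in $G_1$ with nonzero coefficient; as $g\neq 0$ and $e_1,e_2\geq 1$ for every surviving $\tau$, both families acquire linear terms.

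Finally I would run the ``growing powers'' argument of Lemmas \ref{kat}, \ref{ka2} and of the end of the proof of Proposition \ref{p}: collapsing the variables $Y_{1,j_{1,\tau}}$ into a single variable $Y_{1,t_1}$ and the $Y_{2,j_{2,\tau}}$ into a single variable $Y_{2,t_2}$ amounts to substitutions $Y_{i,j_{i,\tau}}\mapsto d_{i,\tau}+b_{i,\tau}Y_{i,t_i}$ with $d_{i,\tau}$ a unit and $b_{i,\tau}\in\mm$, so that every coefficient of a monomial of degree $\geq 2$ in the outcome involves a product of at least two of the $b$'s and hence has value strictly above the value of some linear coefficient; keeping all pre-collapse coefficient values distinct, and bumping the ordinals once more (again through Lemma \ref{oe1}) so that the linear coefficients divided by the relevant differences $v_{i,j+1}-v_{i,j}$ keep distinct values, makes the coefficients $c_1$ of $Y_{1,t_1}$ and $c_2$ of $Y_{2,t_2}$ sums of elements of pairwise distinct values, so they do not cancel and (since $g\neq 0$ with all exponents prime to $p$) neither vanishes. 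Pushing forward along $\rho$ restricted to the two surviving variables gives $g_1$ with $g(y)=g_1(y_{1,t_1},y_{2,t_2})$ and $g_1=g_1(0)+c_1Y_{1,t_1}+c_2Y_{2,t_2}+g_2$, $g_2\in(c_1,c_2)\mm(Y_{1,t_1},Y_{2,t_2})^2V[Y_{1,t_1},Y_{2,t_2}]$. The main difficulty --- the reason the two-variable case is harder than Proposition \ref{p} --- is the combinatorics of the third step: a single monomial $Y_1^{e_1}Y_2^{e_2}$ now contributes the whole rectangle $0\leq n_1\leq e_1$, $0\leq n_2\leq e_2$ of interacting exponents, so one must separate these contributions simultaneously across all monomials, preserve the distinctness of every coefficient value through two successive increases of the ordinals, and still control which linear monomial realizes the overall minimal value after the final collapse.
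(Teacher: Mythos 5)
Your proposal follows essentially the same route as the paper's proof: the same per-monomial separation into fresh variables with the collapse map $\rho$, the divided-power Taylor expansion whose derivatives are single monomials, Lemma \ref{oe1} to make all coefficient values of $G_1-G_1(0)$ distinct, the re-indexing $Y\mapsto d+bY$ (as in Lemma \ref{kat}) so that degree $\geq 2$ coefficients are dominated by linear ones, the extra bump of ordinals so that the linear coefficients divided by the differences $v_{i,j+1}-v_{i,j}$ retain distinct values before passing to common ordinals $t_1,t_2$, and the final collapse making $c_1,c_2$ sums of elements of pairwise distinct values. The only (harmless) over-claim is that both $c_1$ and $c_2$ are nonzero and that every surviving monomial has $e_1,e_2\geq 1$ — in characteristic $0$ one of the variables may not occur in $g$ at all — but the proposition only requires that at least one of $c_1,c_2$ be nonzero, which your argument does give.
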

\begin{proof} 
 Fix $g$ and  $\nu_1<\lambda_1$,  $\nu_2<\lambda_2$.
 Introduce some new variables $(Y_{1;e_1,e_2},Y_{2;e_1,e_2})$, $1\leq e_1\leq m$,  $1\leq e_2\leq m$ and 
  define
$$G=a_{0,0}+\sum_{e_1,e_2=0,e_1+e_2>0}^m a_{e_1,e_2}Y_{1;e_1,e_2}^{e_1}Y_{2;e_1,e_2}^{e_2}.$$
 Let $\rho:V[(Y_{1;e'_1,e'_2}),(Y_{2;e'_1,e'_2})]\to V[Y_1,Y_2]$ be the map given by $Y_{1;e'_1,e'_2}\to Y_1$,  $Y_{2;e'_1,e'_2}\to Y_2$.
We have $\rho(G)=g$. Note that $y_{1;e_1,e_2}=y_1$ is a pseudo limit of $v_{1;e_1,e_2,j_{1;e_1,e_2}}=v_{1;j_{1;e_1,e_2}}$, $1\leq j_{1;e_1,e_2}<\lambda_1$ and
 $y_{2;e_1,e_2}=y_2$ is a pseudo limit of $v_{2;e_1,e_2,j_{2;e_1,e_2}}=v_{2,j_{2;e_1,e_2}}$, $1\leq j_{2;e_1,e_2}<\lambda_2$. Set 
 $$y_{1;e_1,e_2,j_{1;e_1,e_2}}=(y_{1;e_1,e_2}-v_{1;e_1,e_2,j_{1;e_1,e_2}})/(v_{1;e_1,e_2,j_{1;e_1,e_2}+1}-v_{1;e_1,e_2,j_{1;e_1,e_2}})$$
  and similarly 
  $$y_{2;e_1,e_2,j_{2;e_1,e_2}}=(y_{2;e_1,e_2}-v_{2;e_1,e_2,j_{2;e_1,e_2}})/(v_{2;e_1,e_2,j_{2;e_1,e_2}+1}-v_{2;e_1,e_2,j_{2;e_1,e_2}}).$$

 We  follow the proof of  Lemma \ref{ka1} and certainly the proof of Proposition \ref{p}.
 Consider the Taylor expansion
$$ g(y_1,y_2) = \sum_{e_1 =0}^{m}\sum_{n_{e_1}=0}^{e_1} \sum_{n_{e_2}=0}^{e_2}(D^{(n_{e_1},n_{e_2})}G)
(v_{1;e_1,e_2,j_{1;e_1,e_2}},v_{2;e_1,e_2,j_{2;e_1,e_2}}) \cdot $$
$$ (y_{1;e_1,e_2,j_{1;e_1,e_2}} - v_{1;e_1,e_2,j_{1;e_1,e_2}})^{n_{e_1}}(y_{2;e_1,e_2,j_{2;e_1,e_2}} - v_{2;e_1,e_2,j_{2;e_1,e_2}})^{n_{e_2}} 
$$
with  $ D^{(n_{e_1},n_{e_2})}G$ given by $\partial G/\partial (Y_{1;e_1,e_2,j_{1;e_1,e_2}}^{n_{e_1}}Y_{2;e_1,e_2,j_{2;e_1,e_2}}^{n_{e_2}})$. 
 Clearly,   $D^{(n_1,n_2)}G$ has just one monomial.
 
Since the value of a monomial in $(v_{1;e_1,e_2,j_{1;e_1,e_2}}), (v_{2;e_1,e_2,j_{2;e_1,e_2}})$ is constant for $j_{1;e_1,e_2}$, $j_{2;e_1,e_2} $ large,
we conclude   that 
$\beta_{e_1,e_2,n_{e_1},n_{e_2}}$ the value of \\ 
$(D^{(n_{e_1},n_{e_2})}G)((v_{1;e_1,e_2,j_{1;e_1,e_2}}), (v_{2;e_1,e_2,j_{2;em_1,e_2}}))$, $1\leq e_1,e_2\leq m$, $0\leq n_{e_1}\leq e_1$, $0\leq n_{e_2}\leq e_2$ are constant for $j_{1;e_1,e_2}>\nu_1$, $j_{2;e_1,e_2}>\nu_2$ large. 
  Note that \\
	$\partial G/\partial (Y_{1;e_1,e_2,}, Y_{1;e'_1,e'_2})=0$,
		$\partial G/\partial (Y_{2;e_1,e_2}, Y_{2;e'_1,e'_2})=0$, 
	$\partial G/\partial (Y_{1;e_1,e_2}, Y_{2;e'_1,e'_2})=0$ when $e_1\not =e'_1$ or $e_2\not =e'_2$.

 Set $\gamma_{1;e_1,e_2,j_{1;e_1,e_2}}'=\val(v_{1;e_1,e_2,j_{1;e_1,e_2}+1}- v_{1;e_1,e_2,j_{1;e_1,e_2}})=
 \val(y_{1;e_1,e_2}- v_{1;e_1,e_2,j_{1;e_1,e_2}})$ (in fact this is $\val(y_1-v_{1,j_{1;e_1,e_2}}))$
and $\gamma_{2;e_1,e_2,j_{2;e_1,e_2}}'=
\val(v_{2;e_1,e_2,j_{2;e_1,e_2}+1}- v_{2;e_1,e_2,j_{2;e_1,e_2}})=\val(y_{2;e_1,e_2}- v_{2;e_1,e_2,j_{2;e_1,e_2}})$.

    By Lemma \ref{oe1} applied to $\beta_{e_1.e_2,n_{e_1},n_{e_2}}$,  $\gamma_{1;e_1,e_2,j_{1;e_1e_2}}'$, $\gamma_{2;e_1,e_2,j_{2;e_1,e_2}}'$  we see that \\ 
$\gamma_{e_1,e_2,n_{e_1},n_{e_2}}=$
$$ \beta_{e_1,e_2,n_{e_1},n_{e_2}}+\val( (v_{1;e_1,e_2,j_{1;e_1,e_2}+1}-v_{1;e_1,e_2,j_{1;e_1,e_2}})^{n_{e_1}} (v_{2;e_1,e_2,j_{2;e_1,e_2}+1})-v_{2;e_1,e_2,j_{2;e_1,e_2}})^{n_{e_2}})$$
 are different for all  $1\leq e_1,e_2\leq m$, $0\leq n_{e_1}\leq e_1$, $0\leq n_{e_2}\leq e_2$
    for some $(j_{1;e'_1,e'_2})>\nu_1$, 
  $(j_{2;e'_1,e'_2})>\nu_2$ with
 $1\leq e'_1,e'_2\leq m$,  $a_{e'_1,e'_2}\not = 0$.

Changing from $ Y_{1;e_1,e_2}, Y_{2;e_1,e_2}$ to $ Y_{1;e_1,e_2,j_{1;e_1,e_2}},Y_{2;e_1,e_2,j_{2;e_1,e_2}} $ we get a polynomial $G_1\in V[ (Y_{1;e_1,e_2,j_{1;e_1,e_2}},Y_{2;e_1,e_2,j_{2;e_1,e_2}})]$ such that 
 $$G( (Y_{1;e_1,e_2}),(Y_{2;e_1,e_2}))=G_1((Y_{1;e_1,e_2,j_{1;e_1,e_2}}),(Y_{2;e_1,e_2,j_{2;e_1,e_2}}))$$
 and 
the values of the nonzero coefficients of  all monomials  of $G_1-G_1(0)$ are different. 

  Assume that 
	$$G_1-G_1(0)= \sum_{e_1,e_2=1}^m \sum_{r_1,r_2=0,r_1+r_2>0}^e q_{e_1,e_2,r_1,r_2} Y_{1;e_1,e_2,j_{1;e_1,e_2}}^{r_1} Y_{2;e_1,e_2,j_{2;e_1,e_2}}^{r_2}$$
	for some $q_{e_1,e_2,r_1,r_2}\in V$ of different value. In characteristic zero we see that $q_{e_1,e_2,1,0}\not =0$ and $q_{e_1,e_2,0,1}\not =0$ for all $e_1,e_2$. If char\ $K=p>0$ then we  have $q_{e_1,e_2,1,0} \not = 0$, $q_{e_1,e_2,0,1}\not =0$
	for all $e_1,e_2$ by our assumption  because both $e_1,e_2 $ are not multiples of $p$. Note that our assumption does not imply that in $G_1$ there are no monomials with degrees multiples of $p$ in $Y_{1;e_1,e_2,j_{1;e_1,e_2}}$, or $Y_{2;e_1,e_2,j_{2;e_1,e_2}}$, but this does not affect us.

Now we follow the proof of Lemma \ref{kat}.
We change  $Y_{1;e_1,e_2,j_{1;e_1,e_2}}$ by $Y_{1;e_1,e_2,j'_{1;e_1,e_2}}$ and   $Y_{2;e_1,e_2,j_{2;e_1,e_2}}$ by $Y_{2;e_1,e_2,j'_{2;e_1,e_2}}$ for some $j_{1;e_1,e_2}<j'_{1;e_1,e_2}<\lambda_1$ and  $j_{2;e_1,e_2}<j'_{2;e_1,e_2}<\lambda_2$ for all $e_1,e_2$. We see that this means to  replace    $Y_{1;e_1,e_2,j_{1;e_1,e_2}}$  by $d_{1;e_1,e_2,j'_{1;e_1,e_2}}+b_{1;e_1,e_2,
j'_{1;e_1,e_2}} Y_{1;e_1,e_2,j'_{1;e_1,e_2}}$ and  $Y_{2;e_1,e_2,j_{2;e_1,e_2}}$  by\\
 $d_{2;e_1,e_2,j'_{2;e_1,e_2}}+b_{2;e_1,e_2,j'_{2;e_1,e_2}} Y_{2;e_1,e_2,j'_{2;e_1,e_2}}$ 
 for some units $d_{1;e_1,e_2,j'_{1;e_1,e_2}},d_{2;e_1,e_2,j'_{2;e_1,e_2}}   \in V$ and some  $b_{1;e_1,e_2,j'_{1;e_1,e_2}}, b_{2;e_1,e_2,j'_{2;e_1,e_2}} \in \mm$ in $ G_1$ obtaining a polynomial \\
$G_2\in V[(Y_{1;e_1,e_2,j'_{1;e_1.e_2}})),(Y_{2;e_1,e_2,j'_{2;e_1,e_2}}))]$ such that
$$G_1((y_{1;e_1,e_2,j_{1;e_1.e_2}}), (y_{2;e_1,e_2,j_{2;e_1.e_2}}))=
G_2((y_{1;e_1,e_2,j'_{1;e_1.e_2}}), (y_{2;e_1,e_2,j'_{2;e_1.e_2}})).$$

Since the values  of the coefficients of  all monomials  of $G_1-G_1(0)$ are different we note using the proof  of Lemma \ref{kat} that  the values of the coefficients of all monomials of $G_2$ 
  of degree $\geq 2$  are strictly bigger than the minimal  value of the coefficients of monomials of the linear part  of $G_2-G_2(0)$, because in degree $\geq 2$  the coefficients involve  products  of at least two elements of
	$b_{1;e_1,e_2,j'_{1;e_1,e_2}}, b_{2;e_1,e_2,j'_{2;e_1,e_2}}.$ Here it is important that $q_{e_1,e_2,1,0} \not = 0$, $q_{e_1,e_2,0,1}\not =0$ for all $e_1,e_2$.

Actually, we can choose  $j'_{1;e_1,e_2}$,  $j'_{2;e_1,e_2}$ such that we have still different the values of all coefficients of monomials of $G_2-G_2(0)$ using again Lemma \ref{oe1}.  Changing $j_{1;e_1,e_2}$,  $j_{2;e_1,e_2}$ by $j'_{1;e_1,e_2}$,  $j'_{2;e_1,e_2}$ 
we may assume that  the values  of the coefficients of  all monomials  of $G_1-G_1(0)$ are different and for some $c_{1,e_1,e_2}, c_{2,e_1,e_2}\in V$ 
$$G_1-G_1(0)=\sum_{e_1,e_2=1}^m c_{1,e_1,e_2} Y_{1;e_1,e_2,j_{1;e_1,e_2}}+
\sum_{e_1,e_2=1}^m c_{2,e_1,e_2} Y_{2;e_1,e_2,j_{2;e_1,e_2}}+ G'_1$$ 
for  some
 $G'_1\in  ((c_{1,e_1,e_2}),(c_{2,e_1,e_2}))\mm  V[(Y_{1;e_1,e_2,j_{1;e_1.e_2}}),(Y_{2;e_1,e_2,j_{2;e_1,e_2}})]$.

Certainly, $((c_{1,e_1,e_2}),(c_{2,e_1,e_2})) $
 have different values but we would like to have $(c_{1,e_1,e_2} /(v_{1;e_1,e_2,j_{1;e_1,e_2}+1}-v_{1;e_1,e_2,j_{1;e_1,e_2}}))$  of different values and similarly\\
$(c_{2,e_1,e_2} /(v_{2;e_1,e_2,j_{2;e_1,e_2}+1}-v_{2;e_1,e_2,j_{2;e_1,e_2}}))$.

 For this aim we will increase $(j_{1;e_1,e_2})$, $(j_{2;e_1,e_2})$ to some  $(j''_{1;e_1,e_2})$, $(j''_{2;e_1,e_2})$  with the help of Lemma \ref{oe1} such that the elements 
$$c_{1;e_1,e_2}\Pi_{e_1'=1,e_1'\not =e_1}^m  \Pi_{e_2'=1}^m(v_{1;e_1',e_2',j''_{1;e_1',e_2'}+1}-v_{1;e_1',e_2',j''_{1;e_1',e_2'}}) (v_{2;e_1',e_2',j''_{2;e_1',e_2'}+1}-v_{2;e_1',e_2',j''_{2;e_1',e_2'}})$$ 
and
$$c_{2;e_1,e_2} \Pi_{e_1'=1}^m  \Pi_{e_2'=1,e_2'\not =e_2}(v_{1;e_1',e_2',j''_{1;e_1',e_2'}+1}-v_{1;e_1',e_2',j''_{1;e_1',e_2'}}) (v_{2;e_1',e_2',j''_{2;e_1',e_2'}+1}-v_{2;e_1',e_2',j''_{2;e'_1,e'_2}})$$ 
 have different values. Here, it is important that $ ( c_{1,e_1,e_2}), (c_{2,e_1,e_2})$ have different values. Then 
$$(c_{1,e_1,e_2} /(v_{1;e_1,e_2,j''_{1;e_1,e_2}+1}-v_{1;e_1,e_2,j''_{1;e_1,e_2}})), \ \
(c_{2,e_1,e_2} /(v_{2;e_1,e_2,j''_{2;e_1,e_2}+1}-v_{2;e_1,e_2,j''_{2;e_1,e_2}}))$$
 have different values too. 

We change  $Y_{1;e_1,e_2,j_{e_1,e_2}}$ by $Y_{1;e_1,e_2,j''_{1;e_1,e_2}}$ and   $Y_{2;e_1,e_2,j_{2;e_1,e_2}}$ by $Y_{2;e_1,e_2,j''_{2;e_1,e_2}}$. This means to  replace    $Y_{1;e_1,e_2,j_{1;e_1,e_2}}$  by $d_{1;e_1,e_2,j''_{1;e_1,e_2}}+b_{1;e_1,e_2,j''_{1;e_1,e_2}} Y_{1;e_1,e_2,j''_{1;e_1,e_2}}$ and \\ $Y_{2;e_1,e_2,j_{2;e_1,e_2}}$  by $d_{2;e_1,e_2,j''_{2;e_1,e_2}}+b_{2;e_1,e_2,j''_{2;e_1,e_2}} Y_{2;e_1,e_2,j''_{2;e_1,e_2}}$ 
 for some units\\
 $d_{1;e_1,e_2,j''_{1;e_1,e_2}},d_{2;e_1,e_2,j''_{2;e_1,e_2}}   \in V$ and some $b_{1;e_1,e_2,j''_{1;e_1,e_2}}, b_{2;e_1,e_2,j''_{2;e_1,e_2}} \in \mm$ in $ G_1$ obtaining a polynomial 
$G_2\in V[(Y_{1;e_1,e_2,j''_{1;e_1.e_2}})_{e_1,e_2}),(Y_{2;e_1,e_2,j''_{2;e_1,e_2}})_{e_1,e_2})]$ such that\\
$G_1((y_{1;e_1,e_2,j_{1;e_1.e_2}}), (y_{2;e_1,e_2,j_{2;e_1.e_2}}))=
G_2((y_{1;e_1,e_2,j''_{1;e_1.e_2}}), (y_{2;e_1,e_2,j''_{2;e_1.e_2}})).$ 

 The values of the coefficients of $G_2$ are possible not different anymore, but as we have seen the values of the monomials   of $G_2$ of degree  two  are strictly bigger than the minimal  value of the coefficients of the variables $((Y_{1;e_1,e_2,j''_{1;e_1.e_2}})_{e_1,e_2})$,\\
$((Y_{2;e_1,e_2,j''_{2;e_1,e_2}})_{e_1,e_2}) $.

Now change  $Y_{1;e_1,e_2,j''_{1;e_1,e_2}}$ by $Y_{1;e_1,e_2,t_1}$ and   $Y_{2;e_1,e_2,j''_{2;e_1,e_2}}$ by $Y_{2;e_1,e_2,t_2}$ for some\\
 $j''_{1;e_1,e_2}<t_1<\lambda_1$ and  $j''_{2;e_1,e_2}<t_2<\lambda_2$ for all $e_1,e_2$. This means to  replace    $Y_{1;e_1,e_2,j''_{1;e_1,e_2}}$  by $d_{1;e_1,e_2,t_1}+b_{1;e_1,e_2,t_1} Y_{1;e_1,e_2,t_1}$ and  $Y_{2;e_1,e_2,j''_{2;e_1,e_2}}$  by \\
$d_{2;e_1,e_2,t_2}+b_{2;e_1,e_2,t_2} Y_{2;e_1,e_2,t_2}$ 
 for some units 
$d_{1;e_1,e_2,t_1},d_{2;e_1,e_2,t_2}   \in V$ and some\\
 $b_{1;e_1,e_2,t_1}, b_{2;e_1,e_2,t_2} \in \mm$ in $ G_2$ obtaining a polynomial \\
$G_3\in V[(Y_{1;e_1,e_2,t_1})_{e_1,e_2}),(Y_{2;e_1,e_2,t_2})_{e_1,e_2})]$ such that\\
$G_2((y_{1;e_1,e_2,j''_{1;e_1.e_2}}), (y_{2;e_1,e_2,j''_{2;e_1.e_2}}))=
G_3((y_{1;e_1,e_2,t_1}), (y_{2;e_1,e_2,t_2})).$

  Actually 
$$b_{1;e_1,e_2,t_1}=(v_{1;e_1,e_2,t_1+1}-v_{1;e_1,e_2,t_1})/(v_{1;e_1,e_2,j''_{1;e_1,e_2}+1}-v_{1;e_1,e_2,j''_{1;e_1,e_2}})=$$
$$(v_{1,t_1+1}-v_{1,t_1})/(v_{1,j''_{1;e_1,e_2}+1}-v_{1,j''_{1;e_1,e_2}})$$
 and similarly  for $b_{2;e_1,e_2,t_2}$ as in the proof of Proposition \ref{p}.
Thus the coefficients of the variables $(Y_{1;e_1,e_2,t_1})$ in $ G_3 $ have the form \\
$(c_{1,e_1,e_2} /(v_{1;e_1,e_2,j''_{1;e_1,e_2}+1}-v_{1;e_1,e_2,j''_{1;e_1,e_2}}))$ multiplied with $(v_{1;e_1,e_2,t_1+1}-v_{1;e_1,e_2,t_1})$,  and  have different values. Similarly are the  coefficients of the variables $(Y_{2;e_1,e_2,t_2})$ in $ G_3 $.
 Then $G_3-G_3(0)$ has  for some $e_1,e_2$, $ c'_{1,e_1,e_2},c'_{2,e_1,e_2} \in V$ the form 
 $$c'_{1,e_1,e_2} Y_{1;e_1,e_2,t_1}+ c'_{2,e_1,e_2} Y_{2;e_1,e_2,t_2}+G_3'$$ for some  $G_3'\in (c'_{1,e_1,e_2},c'_{2,e_1,e_2})\mm V[(Y_{1;e_1,e_2,t_1}), (Y_{2;e_1,e_2,t_2})]$.

 Applying to $G_3$ the map 
$\rho_1:V[(Y_{1;e_1,e_2,t_1}),(Y_{2;e_1,e_2,t_2}) ]\to V[Y_{1,t_1},Y_{2,t_2}]$ induced by $\rho$  we see that
   $g_1= \rho_1(G_3)$ satisfies
$$g(y_1,y_2)=G_3((y_{1;e'_1,e'_2,t_1}),(y_{2;e'_1,e'_2,t_2}))=g_1(y_{1,t_1},y_{2,t_2})$$
 and  $g_1=g_1(0)+ c_1 Y_{1,t_1} +c_2 Y_{2,t_2}+g'_2$ 
for  some $c_1,c_2\in  V$, at least one of them nonzero,  of value $\val(c'_{1,e_1,e_2})$, respectively  $\val(c'_{2,e_1,e_2})$
 and  some\\
 $g'_2=\rho_1(G'_3)\in  (c_1,c_2)\mm (Y_{1,t_1},Y_{2,t_2})^2  V[Y_{1,t_1},Y_{2,t_2}]$.  Note that $c_1$ is the sum of the  coefficients of some  $Y_{1;e'_1,e'_2,t_1}$, which have  different values.
\hfill\ \end{proof}

\begin{Remark}\label{r'} Suppose that  in the above proposition  char\ $K=p>0$ and in $g$ there exists $a_{e_1,e_2}\not =0$ for some $e_1$, or $e_2$ multiple of $p$. The same proof could give $t_1,t_2$ and  a polynomial $g_1$ such that $g(y_1,y_2)=g_1(y_{1,t_1},y_{2,t_2})$  and $g_1=g_1(0)+c_1  Y_{1,t_1}^{p^{s_1}}+c_2 Y_{2,t_2}^{p^{s_2}}+g_2,$
for some   $c_1,c_2\in V, $ at least one of them nonzero, $s_1,s_2\in {\bf N}$ and $g_2\in  (c_1,c_2)\mm  V[Y_{1,t_1},Y_{2,t_2}]$. However, in $g_2$ could enter also some monomials $ Y_{1,t_1}^{e_1} Y_{2,t_2}^{e_2}$ with $e_1,e_2$ not necessarily multiples of $p$ even $s_1,s_2>1$. We will not use this remark.
\end{Remark}

\begin{Corollary} \label{co'}
Let $V \subset V'$ be an  immediate extension of valuation rings,  $K, K'$ the fraction fields of $V, V'$, $\mm$ the maximal ideal of $V$ and  $y_1,y_2\in K'$ two  elements  which are not in  $K$. 
Assume that char\ $K=p>0$ and $y_i$, $i=1,2$ are   pseudo limits of two pseudo convergent sequences $v_i=(v_{i,j})_{1\leq j<\lambda_i}$, $i=1,2$ over $V$, which have no pseudo limits in $K$.
 Set $y_{i,j}=(y_i-v_{i,j})/(v_{i,j+1}-v_{i,j})$.
Then for every nonzero polynomial  $f\in V[Y_1,Y_2]$  and every two ordinals $\nu_i<\lambda_i$ 
there exist some  $1\leq \nu_i<t_i<\lambda_i$, $i=1,2$ and a polynomial $g\in V[Y_{1,t_1}, Y_{2,t_2}]$
 such that $g=g(0)+c_1  Y_{1,t_1}+c_2  Y_{2,t_2}+g',$ 
for some $c_1,c_2\in V$, at least one of them nonzero,   $g'\in  (c_1,c_2)\mm (Y_{1,t_1},Y_{2,t_2})^2   V[Y_{1,t_1}, Y_{2,t_2}]$ 
and $g(y_{1,t_1},y_{2,t_2})$ is one of the following elements
$f(y_1,y_2)$, \ $y_1f(y_1,y_2)$, \ $y_2f(y_1,y_2)$, \ $y_1y_2f(y_1,y_2)$.
\end{Corollary}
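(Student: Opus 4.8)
The plan is to run the argument in the proof of Corollary~\ref{co} (the one-variable case), with Proposition~\ref{p'} playing the role of Proposition~\ref{p}. Write $f=\sum a_{e_1,e_2}Y_1^{e_1}Y_2^{e_2}$ and split it according to the behaviour of the exponents modulo $p$:
\[
f=f_{11}+f_{p1}+f_{1p}+f_{pp},
\]
where the monomials of $f_{11}$ satisfy $p\nmid e_1$ and $p\nmid e_2$, those of $f_{p1}$ satisfy $p\mid e_1$ (allowing $e_1=0$) and $p\nmid e_2$, those of $f_{1p}$ satisfy $p\nmid e_1$ and $p\mid e_2$, and those of $f_{pp}$ satisfy $p\mid e_1$ and $p\mid e_2$ (in particular $f_{pp}$ carries the constant term, and each of $f_{p1},f_{1p},f_{pp}$ may also carry pure powers of $Y_1$ or of $Y_2$). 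The point of this bookkeeping is that $f_{11}$ already satisfies the hypothesis of Proposition~\ref{p'}, and so do $Y_1f_{p1}$, $Y_2f_{1p}$ and $Y_1Y_2f_{pp}$, because multiplying by $Y_1$ or $Y_2$ turns an exponent divisible by $p$ into one not divisible by $p$. Thus each of the four candidates $f,\ y_1f,\ y_2f,\ y_1y_2f$ carries a distinguished \emph{good} summand, namely $f_{11}$, $y_1f_{p1}$, $y_2f_{1p}$, $y_1y_2f_{pp}$ respectively.

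The characteristic $p>0$ ingredient is the one already used in the proof of Corollary~\ref{co}. Every substitution occurring in Propositions~\ref{p}, \ref{p'} and in Lemma~\ref{kat} replaces a variable $Y_{i,j_i}$ by $d_i+b_iY_{i,t_i}$ with $d_i$ a unit and $b_i\in\mm$; since $p$-th powering is additive, such a substitution carries $V[Y_{1,\ast}^{p},Y_{2,\ast}]$ into itself, and likewise $V[Y_{1,\ast},Y_{2,\ast}^{p}]$ and $V[Y_{1,\ast}^{p},Y_{2,\ast}^{p}]$. Hence under any finite iterate of these substitutions the pieces $f_{p1}$, $f_{1p}$, $f_{pp}$ stay inside the respective \emph{bad} subrings, and every non-constant coefficient of the image acquires a factor $b_i^{p}$ (or $b_i^{p}b_{i'}^{p}$). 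Consequently the \emph{oscillation} of a bad piece $h$, that is $\val\bigl(h(y_{1,\ast},y_{2,\ast})-h(0)\bigr)$, grows by at least $p\,\val(b_i)$ under a substitution of scaling value $\val(b_i)$, while a genuine linear coefficient grows by exactly $\val(b_i)$. So, exactly as in Corollary~\ref{co}, by increasing the ordinals (and invoking Lemma~\ref{oe1} to keep all values separated) one can always force a surviving good linear term to strictly dominate an accompanying bad piece --- \emph{unless} that bad piece dominates it for every admissible choice of the ordinals.

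The proof then proceeds by cases. If $f=f_{11}$, apply Proposition~\ref{p'} to $f$ and take the output element to be $f(y_1,y_2)$. Otherwise, when $f_{11}\neq 0$, apply Proposition~\ref{p'} to $f_{11}$, normalizing it to $g_1(0)+c_1Y_{1,t_1}+c_2Y_{2,t_2}+(\text{higher-order error in }(c_1,c_2)\mm(Y_{1,t_1},Y_{2,t_2})^2V[Y_{1,t_1},Y_{2,t_2}])$, while carrying $f_{p1},f_{1p},f_{pp}$ along under the same substitution inside their bad subrings. Comparing $\min\bigl(\val(c_1),\val(c_2)\bigr)$ with the oscillations of the three accompanying bad pieces and increasing the ordinals as above, either the linear part wins --- then absorb everything of larger value into the error term and obtain the required form for $f(y_1,y_2)$ --- or one of $f_{p1},f_{1p},f_{pp}$, say $f_{p1}$, persistently dominates. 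In that case pass to $y_1f$: now $y_1f_{p1}$ is a good summand, to which Proposition~\ref{p'} applies, while the remnants $y_1f_{11}$, $y_1f_{1p}$, $y_1f_{pp}$ (no longer necessarily good) are dominated; normalize $y_1f_{p1}$ and check, once more via the Frobenius/oscillation estimate and Lemma~\ref{oe1}, that the linear term it produces still beats all the remnants, including the freshly created bad monomials coming from $y_1f_{11}$. This yields the required form for $y_1f(y_1,y_2)$. The branches in which $f_{1p}$ or $f_{pp}$ is dominant are identical, with $y_2f$, resp.\ $y_1y_2f$; the case $f_{11}=0$ is the same with the first normalization step skipped.

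The \textbf{main obstacle} is the interference of the four pieces under the multiplications by $Y_1$, $Y_2$, $Y_1Y_2$. Each such multiplication simultaneously promotes one bad piece to a good one and \emph{demotes} part of the others: a monomial $Y_1^{p-1}Y_2^{e_2}$ of $f_{11}$ becomes the bad monomial $Y_1^{p}Y_2^{e_2}$ of $Y_1f$, and a pure power $Y_1^{e_1}$ that $Y_2$ would have repaired remains a pure power after multiplication by $Y_1$. One must therefore verify, in the branch where (say) $f_{p1}$ is selected, that it strictly dominated $f_{11}$, $f_{1p}$, $f_{pp}$ and their pure-power sub-pieces \emph{before} the multiplication; then, after normalizing $Y_1f_{p1}$, every other summand contributes only terms of value larger than $\val(c_1)$ or $\val(c_2)$, which fall into the error term $g_2\in(c_1,c_2)\mm(Y_{1,t_1},Y_{2,t_2})^2 V[Y_{1,t_1},Y_{2,t_2}]$, except for a few linear monomials that merge harmlessly into $c_1$ or $c_2$ without changing their values. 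Guaranteeing the required strict dominance (breaking ties by a slight increase of the ordinals and re-separating values with Lemma~\ref{oe1}), and reducing the pure-power sub-pieces $Y_1^{e_1}$, $Y_2^{e_2}$ hidden inside $f_{p1}$, $f_{1p}$ by the one-variable reasoning of Corollary~\ref{co}, is where the case analysis is heaviest.
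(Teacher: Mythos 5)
Your overall route coincides with the paper's: the same decomposition of $f$ into four pieces according to divisibility of the exponents by $p$, Proposition \ref{p'} applied to the piece whose exponents are both prime to $p$, a comparison of the resulting linear coefficients with the oscillation of the remaining pieces (breaking ties by increasing the ordinals), and multiplication by $Y_1$, $Y_2$ or $Y_1Y_2$ to promote a persistently dominating bad piece. However, the quantitative claim on which your tie-breaking and dominance mechanism rests is false for the mixed pieces. You assert that under a substitution $Y_{i,j_i}\mapsto d_i+b_iY_{i,t_i}$ every non-constant coefficient of the image of $f_{p1}$, $f_{1p}$, $f_{pp}$ acquires a factor $b_i^{p}$, so that the oscillation of a bad piece grows by at least $p\,\val(b_i)$ while a genuine linear coefficient grows only by $\val(b_i)$. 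This is correct for $f_{pp}$, and in the one-variable situation of Corollary \ref{co} where the bad part lies in $V[Y_j^p]$, but it fails for $f_{p1}$ and $f_{1p}$: for a monomial $Y_1^{pk}Y_2^{e_2}$ with $k\ge 1$, $p\nmid e_2$, the substitution produces among others the term $d_1^{pk}e_2d_2^{e_2-1}b_2\,Y_{2,t_2}$, which gains only a single factor $b_2$. Consequently the oscillation of $f_{p1}$ can grow at exactly the same rate $\val(b_2)$ as the coefficient $c_2$ produced by the good piece, so your argument that a tie can always be converted into strict dominance, and that the piece selected as dominant keeps dominating after the ordinals are increased, breaks down precisely for these mixed pieces.

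The needed repair is to treat the mixed pieces in their good direction by absorption rather than by domination: after the substitution, the part of the image of $f_{p1}$ that is linear in $Y_{2,t_2}$ must be merged into $c_2$ (no cancellation occurs because Lemma \ref{oe1} and Proposition \ref{p'} keep the relevant coefficient values distinct, so the merged coefficient has value equal to the smaller of the two), while only the contributions in the genuinely bad direction (those carrying $b_1^{p}$, or of degree at least two in the variables) are either pushed into the error term $g'\in(c_1,c_2)\mm(Y_{1,t_1},Y_{2,t_2})^2V[Y_{1,t_1},Y_{2,t_2}]$ or handled by the $p$-th power growth argument; symmetrically for $f_{1p}$. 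This is in effect what the paper does, admittedly very tersely, when it writes the bad remainder as an element of $V[Y_{1,j_1}^p,Y_{2,j_2}]$ or of $V[Y_{1,j_1},Y_{2,j_2}^p]$ before comparing values. Without this extra bookkeeping, your case analysis, in particular the branch where you pass from $f$ to $y_1f$ and claim that the new linear term still beats all remnants, is not justified as written.
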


\begin{proof} Fix $\nu_i$, $f=\sum_{e_1,e_2=0}^m a_{e_1,e_2} Y_1^{e_1} Y_2^{e_2} \in V[Y_1,Y_2]$ and let $f=a_{0,0}+f_{00}+f_{10}+f_{01}+f_{11}$ for
 $f_{00}=\sum_{e_1,e_2>0,\ e_1,e_2\ \mbox{not \ multiples\ of\ p}}^m a_{e_1,e_2} Y_1^{e_1} Y_2^{e_2}$, \\
  $f_{10}=\sum_{e_1=0,\ e_2>0,\ e_1\ \mbox{is\ a\  multiple\ of\ p},\ e_2 \mbox{ not\ a \  multiple\ of\ p}}^m a_{e_1,e_2} Y_1^{e_1} Y_2^{e_2}$, \\
 $f_{01}=\sum_{e_1>0,\ e_2=0, \ e_1\ \mbox{not\ a \ multiple\ of \ p},\ e_2\ \mbox{is \ a \ multiple \ of\ p}}^m a_{e_1,e_2} Y_1^{e_1} Y_2^{e_2}$\\
and  $f_{11}=\sum_{e_1,e_2>0,\ e_1,e_2 \ \mbox{multiples\ of\ p}}^m a_{e_1,e_2} Y_1^{e_1} Y_2^{e_2}$.

	Assume that $f_{00}\not =0$.
By Proposition \ref{p'} we see that changing from $Y_1,Y_2$ to some $Y_{1,j_1},Y_{2,j_2} $, $j_1>\nu_1$, $ j_2>\nu_2$ we get $h\in V[Y_{1;j_1},Y_{2;j_2}]$ such that $h(y_{1,j_1},y_{2,j_2})=f(y_1,y_2)$ and $h=h_{00}+{\bar h}$ with ${\bar h}\in V[Y_{1,j_1}^p, Y_{2,j_2}]$, or  ${\bar h}\in V[Y_{1,j_1}, Y_{2,j_2}^p]$  and \\ 
$h_{00}-h_{00}(0)=c'_1Y_{1,j_1}+c'_2Y_{2,j_2}+h'_{00}$ for some $c'_1,c'_2\in V$, at least one of them nonzero, and $h'_{00}\in (c_1',c_2')\mm (Y_{1,j_1},Y_{2,j_2})^2   V[Y_{1,j_1},Y_{2,j_2}]$.

	 If $\min\{\val(c'_1),\val(c'_2)\}<\val({\bar h}(y_{1,j_1},y_{2,j_2})-{\bar h}(0))$ then the conclusion works  for some $c_1,c_2$ with $\val(c_1)=\val(c'_1)$,  $\val(c_2)=\val(c'_2)$. If  $\min\{\val(c'_1),\val(c'_2)\}=\val({\bar h}(y_{1,j_1},y_{2,j_2})-{\bar h}(0))$ then we reduce to the above inequality because we may change from $Y_{1,j_1},Y_{2,j_2} $ to $Y_{1,j'_1},Y_{2,j'_2} $ 
 for some $j'_1>j_1$,  $j'_2>j_2$. Thus the conclusion works again for some $c_1,c_2$.	
Similarly,  if   $\min\{\val(c'_1),\val(c'_2)\}<\val({\bar h}(y_{1,j'_1},y_{2,j'_2})-{\bar h}(0))$
for some $j'_1>j_1$,  $j'_2>j_2$ the conclusion works.

Now assume that   $\min\{\val(c'_1),\val(c'_2)\}>\val({\bar h}(y_{1,j'_1},y_{2,j'_2})-{\bar h}(0))$
for all $j_1<j_1'<\lambda_1$,  $j_2<j_2'<\lambda_2$.  Then we can omit practically $f_{00}$ (this is also the case when  $f_{00}=0$). We may assume that  ${\bar h}\in V[Y_{1,j_1}^p, Y_{2,j_2}]$, then 
 $Y_{1,j_1}{\bar h}$ has the form $\sum_{e_1>0,\ e_2=0,\ e_1 \ \mbox{not \ a\ power\ of\ p}}^{m+1} b_{e_1,e_2} Y_{1,j_1}^{e_1} Y_{2,j_2}^{e_2}$, for some $ b_{e_1,e_2}\in V$, that is of the form $H_1+H_2$ for 
$H_1=\sum_{e_1,e_2>0,\ e_1,e_2\ \mbox{not\ powers \ of\ p}}^{m+1} b_{e_1,e_2} Y_{1,j_1}^{e_1} Y_{2,j_2}^{e_2}$ and
$H_2=\sum_{e_1>0,\ e_2=0,\ e_2\ \mbox{power\ of\ p}}^{m+1} b_{e_1,e_2} Y_{1,j_1}^{e_1} Y_{2,j_2}^{e_2}$.  If $H_1\not= 0$ we may apply Proposition \ref{p'} to $H_1$ and we get a polynomial $H_1'\in V[Y_{1,t_1},Y_{2,t_2}]$ with
$H_1( Y_{1,j_1}, Y_{2,j_2})=H_1'( Y_{1,t_1}, Y_{2,t_2})$ and 
 $H_1'-H_1'(0)=c_1''Y_{1,t_1}+c''_2Y_{2,t_2} +H''_1$ for some $t_i>j_i$, $i=1,2$, $c_1'',c''_2$ in $V$ and $H_1''\in (c''_1,c''_2)\mm V [ Y_{1,t_1}, Y_{2,t_2}]$. By this transformation $H_2$ goes in some $H'_2$.
If $\min\{\val(c''_1),\val(c''_2)\}<\val(H_2'( y_{1,t_1}, y_{2,t_2})-H_2'(0))$ then the conclusion works for some $c_1,c_2$  if we consider $Y_1f$ instead $f$. The same thing is true if the above inequality holds for some $t_1'>t_1$, $t_2'>t_2$. Otherwise, as before we may consider $Y_1 Y_2f$ instead $f$.
\hfill\ \end{proof}

\begin{Lemma} \label{k}
Let $V \subset V'$ be an  immediate extension of valuation rings,  $K, K'$ the fraction fields of $V, V'$, $\mm, \mm'$ the maximal ideals of $V,V'$, 
$(y_e)_{1\leq e\leq  m}$ some elements of $V'$ and $g_e\in V[Y_1,\ldots,Y_m]$, $1\leq e<m$ some polynomials such that the determinant of $((\partial g_e/\partial Y_i)((y_{e'}))_{1\leq e'\leq m, 0\leq i<m}$ is not in $\mm'$.
 Assume that $y_m$ is transcendental over $K$ and $g_e((y_{e'})_{e'})=0$ for all $1\leq e<m$.
 Then $V[y_1,\ldots,y_m]_{\mm'\cap V[y_1,\ldots,y_m]}$ is  a smooth $V$-subalgebra  of $V'$.
\end{Lemma}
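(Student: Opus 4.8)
The plan is to peel off the transcendental coordinate $y_m$ as a ``base'' over which everything else is étale. First I would put $A_0:=V[y_m]\subseteq V'$. Since $y_m$ is transcendental over $K$, the map $V[Y_m]\to V'$ is injective, so $A_0$ is a polynomial ring in one variable over $V$; in particular $A_0$ is smooth over $V$ and is a normal domain. As every element of $A_0$ outside $\mm'$ is a unit of $V'$, the localization $A:=(A_0)_{\mm'\cap A_0}$ is again a subring of $V'$, still smooth over $V$ and still a normal domain, and it is local. Write $D:=V[y_1,\dots,y_m]$ and $\mathfrak n:=\mm'\cap D$. Because $y_m\in A$ one has $D\subseteq A[y_1,\dots,y_{m-1}]\subseteq D_{\mathfrak n}$ (the last inclusion because $A\subseteq D_{\mathfrak n}$ and $y_i\in D$), and localizing the chain at $D\setminus\mm'$ shows $D_{\mathfrak n}=(A[y_1,\dots,y_{m-1}])_{\mm'\cap(\cdot)}$; so it suffices to prove this last ring is smooth over $V$.

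Next I would realize that ring as a localization of an étale $A$-algebra. Set $\bar g_e:=g_e(Y_1,\dots,Y_{m-1},y_m)\in A[Y_1,\dots,Y_{m-1}]$ for $1\le e<m$, let $B:=A[Y_1,\dots,Y_{m-1}]/(\bar g_1,\dots,\bar g_{m-1})$, and let $\pi\colon B\to V'$ be the $A$-algebra homomorphism with $\pi(Y_i)=y_i$; it is well defined since $g_e((y_{e'}))=0$, and its image is $D':=A[y_1,\dots,y_{m-1}]$. Put $\Pp:=\pi^{-1}(\mm')$. The class in $B$ of $\Delta:=\det\bigl((\partial\bar g_e/\partial Y_i)_{1\le e,i\le m-1}\bigr)$ is carried by $\pi$ to $\det\bigl((\partial g_e/\partial Y_i)((y_{e'}))_{1\le e,i\le m-1}\bigr)$, which by hypothesis lies outside $\mm'$, hence is a unit of $V'$; so $\Delta\notin\Pp$. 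After inverting $\Delta$ the algebra $B$ becomes étale over $A$ (it is then presented by $m-1$ equations in $m-1$ indeterminates with invertible Jacobian), so $B_{\Pp}$ is étale over $A$; being étale over the smooth $V$-algebra $A$, it is smooth over $V$.

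The last — and, I expect, the only really substantive — step is to show that $\pi$ becomes injective after localizing, so that $B_{\Pp}\xrightarrow{\ \sim\ }D'_{\mm'\cap D'}=D_{\mathfrak n}$; this is exactly where transcendence of $y_m$ enters, through the normality of $A$. Since $B_{\Pp}$ is local and étale over the normal domain $A$, it is a normal local ring, hence a domain; and as $B_{\Pp}$ is $A$-flat it is $A$-torsion-free, so it injects into $S^{-1}B_{\Pp}$ with $S:=A\setminus\{0\}$. Now $S^{-1}B_{\Pp}=B_{\Pp}\otimes_A\Frac(A)$ is étale over the field $\Frac(A)$ and, being a localization of the domain $B_{\Pp}$, is itself a domain; an étale algebra over a field that is a domain is a field. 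Finally $\qq:=\ker(B_{\Pp}\to V')$ is a prime of $B_{\Pp}$ with $\qq\cap A=\ker(A\hookrightarrow V')=0$, so $\qq$ is disjoint from $S$; its extension to the field $S^{-1}B_{\Pp}$ is the zero ideal, and contracting back gives $\qq=0$. Hence $D_{\mathfrak n}\cong B_{\Pp}$ is smooth over $V$, and it is a subalgebra of $V'$ by construction, as claimed.
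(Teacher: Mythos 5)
Your proof is correct and takes essentially the same route as the paper: you present $V[y_1,\dots,y_m]_{\mm'\cap V[y_1,\dots,y_m]}$ as a localization of an \'etale extension of (a localization of) the polynomial ring $V[y_m]$ via the Jacobian hypothesis, and then show the comparison map to $V'$ is injective because its kernel contracts to zero in the base, which is exactly where the transcendence of $y_m$ enters. The only difference is the justification of that injectivity: the paper invokes Swan to see the \'etale algebra is essentially finite over $V[Y_m]$ and concludes by integrality, while you use normality of the base, flatness (torsion-freeness), and the fact that the generic fibre is a field --- a self-contained variant of the same step.
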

\begin{proof} Let $\rho:C=V[Y_1,\ldots,Y_m]/((g_e))\to A=V[y_1,\ldots. y_m]$ be the map given by $Y_e\to y_e$. Then $\Ker \rho\cap V[Y_m]=0$ because $y_m$ is transcendental over $K$. Note that $C_{\rho^{-1}(\mm')}$ is etale over $V[Y_m]$ and so essentially finite (see \cite[Theorem 2.5]{S}). Thus the kernel of the map   
$\rho':C_{\rho^{-1}(\mm')}\to A_{\mm'\cap A}$ induced by $\rho$ must be zero because its intersection with $V[Y_m]$ is zero. So $\rho'$ defines an isomorphism of smooth $V$-algebras. 
\hfill\ \end{proof}

\begin{Lemma} \label{1}
 Let  $ V'$ be an  immediate extension  of a valuation ring $V$ containing a field, $\mm, \mm'$ the maximal ideals of $V,V'$, $K\subset K'$ their fraction field extension and $y\in V'$ an unit, transcendental element over $K$. 
Let  $f\in V[Y]$ be a nonzero polynomial and some $d\in V\setminus \{0\}$ and an unit $z\in V'$ such that $f(y)=d z$. Then there exists 
 a smooth $V$-subalgebra  of $V'$ containing $y,z$.  
\end{Lemma}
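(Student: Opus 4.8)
\emph{Overview and the easy case.} The plan is to use Corollary~\ref{co} to rewrite the relation $f(y)=dz$ in a shape where $z$ becomes visibly a polynomial in a rescaled pseudo convergent variable, and then to finish by smoothness of a localization of a polynomial ring. If every coefficient of $f$ has value $\ge\val(d)$ (in particular if $f$ is constant), then $(f/d)(y)=z$ already lies in $V[y]$, which is a polynomial ring over $V$ since $y$ is transcendental over $K$; hence $V[y]_{\mm'\cap V[y]}$ is a smooth $V$-subalgebra of $V'$ containing $y$ and $z$, and we are done. So I assume from now on that $\val(d)$ is strictly larger than the minimal value of the coefficients of $f$; since $y$ and $z$ are units this minimal value is $\le\val(f(y))=\val(d)$, so this is the only remaining case.

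\emph{Setting up the pseudo convergent variable and applying Corollary~\ref{co}.} Since $y\in V'\setminus K$ and $V\subset V'$ is immediate — whence $V\subset V[y]_{\mm'\cap V[y]}$ is immediate too — the element $y$ is a pseudo limit of a pseudo convergent sequence $v=(v_j)_{j<\lambda}$ over $V$ with no pseudo limit in $K$, and as $y$ is a unit we may take $\val(v_j)=0$ for all $j$; then each $y_j=(y-v_j)/(v_{j+1}-v_j)$ is a unit in $V'$, transcendental over $K$, with $V[y_j]\supseteq V[y]$. Applying Corollary~\ref{co} to $g=f$ (in characteristic $0$ one uses Proposition~\ref{p} instead, which always yields the first alternative below) produces $t$ and $g_1\in V[Y_t]$ with $g_1=g_1(0)+cY_t+g_2$, $c\in V\setminus\{0\}$, $g_2\in c\mm Y_t^2V[Y_t]$, and either $f(y)=g_1(y_t)$ or $yf(y)=g_1(y_t)$. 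In the second case $yz$ is again a unit and $yf(y)=d(yz)$; if I produce a smooth $V$-subalgebra of $V'$, obtained as a localization of a polynomial ring at a prime avoiding $\mm'$ and containing $y$ and $yz$, then $y$ is invertible there (as $\val(y)=0$), so $z=(yz)y^{-1}$ belongs to it. Thus, replacing $(f,z)$ by $(Yf,yz)$, I may assume $f(y)=g_1(y_t)$, i.e. $dz=g_1(0)+cy_t+g_2(y_t)$.

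\emph{The computation.} Because $y_t$ is transcendental over $K$ and $y=v_t+(v_{t+1}-v_t)y_t$, the polynomial identity $g_1(Y_t)=f\big(v_t+(v_{t+1}-v_t)Y_t\big)$ holds, so $g_1(0)=f(v_t)\in V$. The key point, granted for the moment, is that $\val(f(v_t))\ge\val(d)$ for $t$ large. Given this, one deduces $\val(c)\ge\val(d)$: otherwise $\val(g_1(0))\ge\val(d)>\val(c)=\val(cy_t)$ while $\val(g_2(y_t))>\val(c)$ (the coefficients of $g_2$ lie in $c\mm$ and $y_t$ is a unit), forcing $\val(dz)=\val(c)<\val(d)$, absurd. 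Then $g_1(0)/d,\,c/d\in V$, and every coefficient of $g_2$ has value $>\val(c)\ge\val(d)$, so $g_2(Y_t)/d\in\mm V[Y_t]$; hence $z=g_1(0)/d+(c/d)\,y_t+(g_2/d)(y_t)\in V[y_t]$. As $y\in V[y_t]$ as well, the localization $V[y_t]_{\mm'\cap V[y_t]}$ of the polynomial ring $V[y_t]$ is the desired smooth $V$-subalgebra of $V'$ containing $y$ and $z$.

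\emph{The main obstacle.} Everything thus rests on the inequality $\val(f(v_t))\ge\val(d)$ for large $t$. If $v$ is of transcendental type this is immediate, since then $\val(f(v_j))$ is eventually constant and equal to $\val(f(y))=\val(d)$, and one expects $v$ can always be taken of transcendental type because $y$ is transcendental over $K$. If $v$ is of algebraic type with minimal polynomial $h$, the inequality still holds as soon as $\deg f<\deg h$ (then $f$ and its divided derivatives $D^{(k)}f$ are of constant type along $v$, so $\val(f(v_t))$ is eventually $\val(d)$); the residual case — where $v$ pseudo converges toward a root of $f$, so $\val(f(v_t))<\val(d)$ for all $t$ and $z\notin V[y_t]$ — is the delicate one, and resolving it (by reducing $f$ modulo $h$ while keeping track of $y$ and $z$, and/or by re-centering $g_1$ and reapplying Corollary~\ref{co}) is where the real work of the lemma lies.
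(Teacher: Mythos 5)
Your argument tracks the paper's proof up through the application of Corollary~\ref{co} (and your reduction from the case $yf(y)=g_1(y_t)$ to $f(y)=g_1(y_t)$ by inverting the unit $y$ in the localization is fine), and the case you do settle — $\val(c)\geq\val(d)$, equivalently $d\mid c$, where $z=g_1(y_t)/d\in V[y_t]$ — is handled exactly as in the paper. But the case you leave open, $\val(c)<\val(d)$, is not a residual technicality to be fixed by ``re-centering and reapplying Corollary~\ref{co}'': it is the main content of the lemma, it genuinely occurs (your hope that $v$ can always be taken of transcendental type because $y$ is transcendental over $K$ is unfounded — the paper explicitly reduces to the algebraic type, the transcendental type being the easy case), and your proposed iteration has no visible termination. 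So as written the proof has a real gap.

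The missing idea is that in this case one should not try to force $z$ into $V[y_t]$ (indeed $z\notin V[y_t]$ there), but rather adjoin $z$ as a new generator and use Lemma~\ref{k}. Since $V$ is a valuation ring, $d\nmid c$ gives $c\mid d$, and from $dz=h(0)+cy_t+h'(y_t)$ with $h'\in c\mm Y_t^2V[Y_t]$ one also gets $c\mid h(0)$. Hence $g=(h-dZ)/c\in V[Y_t,Z]$ is a polynomial vanishing at $(y_t,z)$, and $\partial g/\partial Y_t(y_t,z)=1+(\partial h'/\partial Y_t)(y_t)/c$ is a unit of $V'$ because $\partial h'/\partial Y_t\in c\mm Y_tV[Y_t]$. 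Since $z$ (resp.\ $yz$ in the other alternative, where one takes $g=(h-dYZ)/c$) is transcendental over $K$, Lemma~\ref{k} applies and shows that $V[y_t,z]_{\mm'\cap V[y_t,z]}$ (resp.\ $V[y_t,yz]_{\mm'\cap V[y_t,yz]}$, which contains $y$ and hence $z$) is a smooth $V$-subalgebra of $V'$ — it is étale over a localization of the polynomial ring in $Z$ — containing $y$ and $z$. This one-step use of Lemma~\ref{k} replaces the unresolved ``delicate case'' in your write-up; no further induction on $t$ or on $f$ is needed.
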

\begin{proof} 
By \cite[Theorem 1]{Kap}  $y$ is a pseudo limit of a pseudo convergent sequence $v=(v_j)_{j<\lambda}$, which 
 has no pseudo limits in $K$.  Set $y_j=(y-v_j)/(v_{j+1}-v_j)$. We can assume that $v$ is algebraic, otherwise the result follows for example using \cite[Lemma 15]{P} (this is not important for our proof).

 By Corollary \ref{co} there exist some  $j<\lambda$ and a polynomial $h\in V[Y_j]$ such that either 
$f(y)=h(y_j)$, or $y f(y)=h(y_j)$
 and  $h=h(0)+c  Y_j+h',$
for some   $c\in V\setminus \{0\}$ and $h'\in  c\mm Y_j^2 V[Y_j]$.   If $d|c$ then $d|h(0)$. If $f(y)=h(y_j)$ then  $z=h(y_j)/d$ is contained in the smooth $V$-subalgebra $V[y_j]$ and $y$ is too.
 If $yf(y)=h(y_j)$  then $yz=h(y_j)/d$ and again  $  yz$ is contained in  in the smooth $V$-subalgebra $V[y_j]$ and so $y$ is too. Thus $z\in V[y_j]_{\mm'\cap V[Y_j]}$.

Now assume that $d$ does not divide $c$. Then $c|d$ and $c|h(0)$. If $f(y)=h(y_j)$ then apply Lemma \ref{k} to $g=(h-dZ)/c$ and we see that $y,z$ are contained in the smooth $V$-subalgebra $V[y_j,z]_{\mm'\cap V[y_j,z]}$ of $V'$.  If $yf(y)=h(y_j)$ then apply Lemma \ref{k} to $g=(h-d YZ)/c$ and we see that $y_j$ and  $yz$ are contained in the smooth $V$-subalgebra $V[y_j, yz]_{\mm'\cap V[y_j,yz]}$ of $V'$, which contains also $y$ and so $z$.
 \hfill\ \end{proof}

\begin{Theorem} \label{key}
 Let  $ V'$ be an  immediate extension  of a valuation ring $V$ containing a field, $\mm, \mm'$ the maximal ideals of $V,V'$, $K\subset K'$ their fraction field extension and $y_0\in V'$ an unit, transcendental element over $K$ with $K'=K(y_0)$. 
  Then $V'$ is  a filtered  union of its  smooth  $V$-subalgebras.
\end{Theorem}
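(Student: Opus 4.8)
The plan is to show two things: that every element of $V'$ lies in some smooth $V$-subalgebra of $V'$ of finite type, and that any two such subalgebras lie in a common one. Since a smooth $V$-subalgebra produced by Lemma \ref{k} has the form $V[a_1,\dots,a_r]_{\mm'\cap V[a_1,\dots,a_r]}$, hence is determined by the finite set $\{a_1,\dots,a_r\}\subset V'$, these two facts give that the smooth finite-type $V$-subalgebras of $V'$ form a directed family with union $V'$, which is the assertion.

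First I would fix $z\in V'$; if $z\in K$ then $z\in V$ and we are done, so assume $z\notin K$. As $K'=K(y_0)$ with $y_0$ transcendental over $K$, write $z=f(y_0)/g(y_0)$ with $f,g\in V[Y]$, $g\neq 0$; then $P(z,y_0)=0$ for the nonzero polynomial $P(Y_1,Y_2):=Y_1g(Y_2)-f(Y_2)$. Since $V\subset V'$ is immediate and $z,y_0\notin K$, by \cite[Theorem 1]{Kap} there are pseudo convergent sequences $w=(w_k)_{k<\mu}$, $v=(v_j)_{j<\lambda}$ over $V$, without pseudo limits in $K$, of which $z$ resp. $y_0$ is a pseudo limit; set $z_k=(z-w_k)/(w_{k+1}-w_k)$ and $y_j=(y_0-v_j)/(v_{j+1}-v_j)$. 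These are units of $V'$, and $z_k$, $y_j$ are transcendental over $K$ ($K$ being algebraically closed in $K(y_0)$).

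Now I would apply Corollary \ref{co'} (in characteristic $p>0$; in characteristic $0$ apply Proposition \ref{p'} directly to $P$, or invoke Theorem \ref{T0}) with $y_1=z$, $y_2=y_0$: for all large enough bounds there exist indices $t_1,t_2$ and $G\in V[Y_{1,t_1},Y_{2,t_2}]$ with $G=G(0)+c_1Y_{1,t_1}+c_2Y_{2,t_2}+G'$, where $c_1,c_2\in V$ are not both $0$, $G'\in(c_1,c_2)\mm(Y_{1,t_1},Y_{2,t_2})^2V[Y_{1,t_1},Y_{2,t_2}]$, and $G(z_{t_1},y_{t_2})$ is one of $P(z,y_0),zP(z,y_0),y_0P(z,y_0),zy_0P(z,y_0)$ — hence is $0$. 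After possibly interchanging the two variables, assume $c_1\neq0$ and $\val(c_1)\leq\val(c_2)$. Then $c_1$ divides $c_2$ and every coefficient of $G'$, and, since $z_{t_1},y_{t_2}$ are units, the relation $G(z_{t_1},y_{t_2})=0$ forces $\val(G(0))\geq\val(c_1)$, so $c_1\mid G$. Put $\widetilde G:=G/c_1\in V[Y_{1,t_1},Y_{2,t_2}]$: then $\widetilde G(z_{t_1},y_{t_2})=0$, the coefficient of $Y_{1,t_1}$ in $\widetilde G$ is $1$, and $\widetilde G-\widetilde G(0)-Y_{1,t_1}-(c_2/c_1)Y_{2,t_2}\in\mm(Y_{1,t_1},Y_{2,t_2})^2V[Y_{1,t_1},Y_{2,t_2}]$, so $(\partial\widetilde G/\partial Y_{1,t_1})(z_{t_1},y_{t_2})$ equals $1$ plus an element of $\mm'$ and is a unit. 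By Lemma \ref{k} (with $m=2$, the single relation $\widetilde G$, the free variable $Y_{2,t_2}\mapsto y_{t_2}$ transcendental over $K$, and $Y_{1,t_1}$ solved for) the ring $V[z_{t_1},y_{t_2}]_{\mm'\cap V[z_{t_1},y_{t_2}]}$ is a smooth $V$-subalgebra of $V'$, and it contains $z=w_{t_1}+(w_{t_1+1}-w_{t_1})z_{t_1}$ and $y_0=v_{t_2}+(v_{t_2+1}-v_{t_2})y_{t_2}$. For finitely many $z^{(1)},\dots,z^{(n)}\in V'\setminus K$ one runs this for each $z^{(i)}$ and, using a coordinate shift of the kind in Lemma \ref{kat} (which preserves the normal form), brings all the $y_0$-coordinates to a single transcendental $y_{T}$; the resulting $n$ relations, in the variables $Y^{(1)},\dots,Y^{(n)},Y_T$, have Jacobian with respect to $Y^{(1)},\dots,Y^{(n)}$ diagonal with unit entries, so Lemma \ref{k} again produces one smooth finite-type $V$-subalgebra containing all the $z^{(i)}$, which gives directedness.

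The heart of the difficulty lies not in the above assembly but upstream, in Corollary \ref{co'} and Proposition \ref{p'}: one must, via a pseudo convergent change of coordinates, turn the vanishing relation $P(z,y_0)=0$ (or one of $zP,y_0P,zy_0P$) into $G=0$ with $G$ carrying an honest linear term whose coefficient has strictly minimal value among all coefficients of $G-G(0)$ — only then does dividing by that coefficient leave a unit partial derivative and permit Lemma \ref{k}. In characteristic $p$ this is exactly where the $p$-power monomials of $g$ and $f$ must either be forced into the remainder $G'$ or be absorbed by replacing $P$ by $zP$, $y_0P$ or $zy_0P$, which is why the genuine two-variable statement (Proposition \ref{p'}, with its delicate use of the Ostrowski-type Lemmas \ref{o2}, \ref{oe1}), rather than an iteration of the one-variable Corollary \ref{co}, is needed.
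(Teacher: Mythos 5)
Your treatment of a single element is sound and in substance parallels the paper's: writing $P(z,y_0)=zg(y_0)-f(y_0)$, applying Corollary \ref{co'}, using $G(z_{t_1},y_{t_2})=0$ to divide by the nonzero linear coefficient of minimal value, and feeding the normalized relation into Lemma \ref{k} (the remaining variable being transcendental, since $K$ is algebraically closed in $K(y_0)$) does produce a smooth $V$-subalgebra containing $z$ and $y_0$. The genuine gap is in your directedness step, which is where the paper's proof does its real work. Corollary \ref{co'} gives no control over which of the two linear coefficients is the nonzero one of minimal value: for a given $z^{(i)}$ it may be the coefficient of the $y_0$-variable, so that after normalization $\partial\widetilde G^{(i)}/\partial Y^{(i)}$ lies in $\mm'$ (or vanishes) and the only unit partial derivative is the one with respect to $Y_T$. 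Your assertion that the $n$ relations have Jacobian with respect to $Y^{(1)},\dots,Y^{(n)}$ ``diagonal with unit entries'' silently assumes the favourable case for every $i$. If two or more relations are of the unfavourable kind, then no $n\times n$ minor of the Jacobian of your star-shaped system (each relation involving only $Y^{(i)}$ and $Y_T$) is a unit: omitting the $Y_T$-column leaves an unfavourable row all of whose entries lie in $\mm'$, while keeping it forces two unfavourable rows to have their unique unit entries in the same column; either way the determinant is in $\mm'$ and Lemma \ref{k} does not apply. A single unfavourable relation can be absorbed (solve it for $Y_T$ and keep its $Y^{(i)}$ as the free transcendental), but not two. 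A secondary point: shifting all $y_0$-indices to a common $t$ multiplies the $Y_T$-coefficients by elements of $\mm$, so an unfavourable relation loses its unit coefficient altogether and must be renormalized as in Lemma \ref{kat}, which can again change which variable is solvable.

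This is exactly the difficulty the paper's induction is organized around: there the new relation produced by Corollary \ref{co'} links the new element $y_n$ to the currently free element $y_{n-1}$ (not to $y_0$), so the relations form a chain; whichever coefficient is minimal, one either solves for $y_{n-1}$ (the free variable becomes $y_n$) or for $y_n$ (the free variable remains $y_{n-1}$; this is the paper's ``change $f_n$ with $f_{n-1}$''), and a unit maximal minor is kept at every stage. To close your gap you would have to rebuild this chain structure --- when a relation refuses to be solved for $Y^{(i)}$, replace it by a relation between $z^{(i)}$ and the element currently serving as free variable, and iterate --- which is in substance the paper's inductive proof; the one-sentence diagonal-Jacobian assembly does not suffice.
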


\begin{proof} Let $\rho:V[Y_0]\to V'$ be the injective map given by $Y_0\to y_0$ and $f_1,\ldots, f_n\in V[Y_0]$ nonzero polynomials. Then there exist some $d_e\in V\setminus \{0\}$ and some units $y_e\in V'$ such that $f_e(y_0)=d_ey_e$, $1\leq e\leq n$. We claim that there exists 
 a smooth $V$-subalgebra $A_{f_1,\ldots,f_n}$ of $V'$ containing $(y_e)$.  
  By \cite[Theorem 1]{Kap}  $y_e$ is a pseudo limit of a pseudo convergent sequence $v_e=(v_{e,j_e})_{j_e<\lambda_e}$, which has no pseudo limits in $K$,  $0\leq e\leq n$. 
   Set $y_{e,j_e}=(y_e-v_{e,j_e})/(v_{e,j_e+1}-v_{e,j_e})$. 
	
	Apply induction on $n$. If $n=1$ we apply Lemma \ref{1}. 
Assume that $n\geq 2$. Using induction hypothesis on $n$ we may suppose after a change of the numbering of $(y_e)$
that there exist $j_e<\lambda_e$, $0\leq e< n$ and  $h_1,\ldots, h_{n-1}\in V[Y_{0,j_0},\ldots,Y_{n-1,j_{n-1}}]$
such that $h_i((y_{e',j_{e'}}))=0$, $1\leq i< n$ and the determinant of the matrix 
$$((\partial h_i/\partial Y_{e,j_{e}}) ((y_{e',j_{e'}})))_{1\leq i \leq n, 0\leq e\leq n-2}$$ is not in $\mm'.$

As in the proof of Lemma \ref{1} we have to consider two cases. First assume that applying Lemma \ref{1} we arrive in the second case, namely that for some $j_0 <\lambda_0$ there exists $h_1\in V[Y_{0,j_0},Y_1]$ such that
$h_1(y_{0,j_0},y_1)=0$ and $(\partial h_1/\partial Y_{0,j_0})(y_{0,j_0},y_1)\not \in \mm'$.

Note that $y_n$ is transcendental  over $K$ because $y_0$ is so. Since $y_0$ is algebraic over $K(y_n)$  and $y_{n-1}\in K(y_0)$ we see that $y_{n-1}$ is algebraic over $K(y_n)$. Thus there exists a  nonzero polynomial $g_{n-1,n}\in K[Y_{n-1},Y_n]$ with $g_{n-1,n}(y_{n-1},y_n)=0$. Clearly, we may choose $g_{n-1,n}\in V[Y_{n-1}, Y_n]$.

By Corollary \ref{co'}
there exist some  $j_{n-1}<t_{n-1}<\lambda_{n-1}$, $t_n<\lambda_n$ and a polynomial $h_{n-1,n}\in V[Y_{n-1,t_{n-1}}, Y_{n,t_n}]$
 such that  
$$h_{n-1,n}- h_{n-1,n}(0)=c_{n-1} Y_{n-1,t_{n-1}}+c_n  Y_{n,t_n}+h_{n-1,n}'$$
for some $c_{n-1},c_n\in V$, at least one of them nonzero, and \\
 $h_{n-1,n}' \in  (c_{n-1},c_n)\mm V[Y_{n-1,t_{n-1}}, Y_{n,t_n}]$ 
and 
$h_{n-1,n}(Y_{n-1,t_{n-1}}, Y_{n,t_n})$ corresponds to one of the following polynomials $g_{n-1,n}(Y_{n-1},Y_n),$\  \ $Y_{n-1}g_{n-1,n}(Y_{n-1},Y_n)$,\\
 $Y_n g_{n-1,n}(Y_{n-1},Y_n)$,
$Y_{n-1}Y_n g_{n-1,n}(Y_{n-1},Y_n)$.
Set $t_q=j_q$ for $0\leq q\leq n-2$.
The change from $j_e$  to $t_e$ will modify a little the coefficients of $h_i$, $1\leq i<n-1$ but their forms remain.  

Assume that  $c_{n-1}\not =0$  and $\val(c_{n-1})\leq\val(c_n)$ 
then $c_{n-1}|h_{n-1,n}(0)$ because\\
 $h_{n-1,n}(y_{n-1,t_{n-1}},y_{n,t_n})=0$ and we may change 
$h_{n-1,n}$ by $h_{n-1}=h_{n-1,n}/c_{n-1}$ independently if it was obtained from $g_{n-1,n}(Y_{n-1},Y_n)$, or $Y_{n-1}g_{n-1,n}(Y_{n-1},Y_n)$, or $Y_n g_{n-1,n}(Y_{n-1},Y_n)$, or $Y_{n-1} Y_n g_{n-1,n}(Y_{n-1},Y_n)$.  Thus a minor of maximal rank of the matrix 
$((\partial h_i/\partial Y_{e,t_e}) ((y_{e',t_{e'}})))_{1\leq i \leq n, 0\leq e< n}$ is  not contained in $\mm'$.
  By   Lemma \ref{k} we get a smooth $V$-subalgebra
 $A_{f_1,\ldots,f_n}= V[y_{0,t_0},\ldots, y_{n,t_n}]_{\mm'\cap V[y_{0,t_0},\ldots, y_{n,t_n}}$  of $V'$ containing $(y_{e,t_e})$ and so all $y_e$.
Certainly, we could have above also $c_n\not =0$ and $\val(c_{n-1})>\val(c_n)$
in which case change  $f_n$ with $f_{n-1}$. Hence our claim is proved.

Now assume that applying Lemma \ref{1} we arrive in the first case of the proof, namely $y_1\in V[y_{0,j_0}]_{\mm'\cap V[y_{0,j_0}]}$ for some $j_0<\lambda_0$. Then we may replace $y_0$ by
$$(v_{0,j_0+1}-v_{0,j_0})y_0+v_{0,j_0}$$
and we may omit $y_1$, that is we denote $y_e$ by $y_{e-1}$ for $1<e\leq n$. So we arrive in the case $n-1$ when we apply the induction hypothesis to show our claim.

The family  $(A_{f_1,\ldots, f_n})$ given by all finite subsets $\{f_1,\ldots,f_n\}$ of nonzero polynomials of $V[Y_0]$ is filtered and consider 
 their union $\mathcal A$. We claim that $V'={\mathcal A}$. Indeed, 
 let $f_1(y_0)/f_2(y_0)$ be a rational function from $V'$ (a general element of $V'$ has the form $f_1(y_0)/f_2(y_0)$ with $f_i\in V[Y_0]$ and 
$\val(f_1(y_0))\geq\val(f_2(y_0))$).  We can assume  $f_i(y_0)=d_iy_i$, $i=1,2$, for some $d_i \in V\setminus \{0\}$ and some units $y_i\in V'$, $i=1,2$. For some $t_e<\lambda_e$, $0\leq e\leq 2$ we find as above a smooth $V$-subalgebra $A_{f_1,f_2}$ of $V'$ containing   $y_{0,t_0}$, $y_{1,t_1}$ , $y_{2,t_2}$ and so containing
 $(y_e)$,  $0\leq e\leq 2$.
 As  
$\val(d_1)\geq \val(d_2)$ because $f_1(y_0)/f_2(y_0)\in V'$ we get  
$$f_1(y_0)/f_2(y_0)=(d_1/d_2)y_1y_2^{-1}\in   A_{f_1,f_2}\subset {\mathcal A}.$$
\hfill\ \end{proof}

{\bf Proof of Theorem \ref{T3}}

 Firstly assume  $K'/K$ is a field extension  of finite type, let us say $K'=K(x_1,\ldots,x_n)$ for some unit elements $x=(x_1,\ldots,x_n)$ of $V'$ algebraically independent over $K$. Set $V_i=V'\cap K(x_1,\ldots,x_i)$, $1\leq i< n$ and $V_0=V$, $V_n=V'$. 
Then $V_{i+1}$ is a   filtered  union of its smooth  $V_i$-algebras  for all $0\leq i<n$ by Theorem \ref{key}  and we are done. In general, express $K'$ as a filtered  union of some subfields $(K_i)_i$ of $K'$ which are pure transcendental, finite type field extensions of $K$. Then $V'$ is a filtered increasing union of $V'\cap K_i$.

\begin{Corollary} \label{C2}
Let $V\subset V'$ be an  immediate extension of valuation rings  and $K\subset K'$ its fraction field extension.  Assume that $V$ contains a field,  $V$ is Henselian and $K'=K(x)$ for some algebraically independent elements $x$ over $K$. 
 Then any finite
system of polynomials over $V$, which has a solution in $V'$, has also one in $V$.
\end{Corollary}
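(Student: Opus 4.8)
The plan is to read the statement off from Theorem \ref{T3} together with the universal lifting property of smooth morphisms over a Henselian local ring. So let $g_1,\ldots,g_s\in V[T_1,\ldots,T_m]$ be polynomials and suppose $a=(a_1,\ldots,a_m)\in(V')^m$ satisfies $g_j(a)=0$ for $1\le j\le s$. By Theorem \ref{T3}, $V'$ is a filtered union of smooth $V$-subalgebras, and since $a_1,\ldots,a_m$ is a finite family there is one such subalgebra $A$ containing all the $a_i$; the relations $g_j(a)=0$ then hold already in the subring $A$. Inspecting the proofs of Theorems \ref{key} and \ref{T3}, the smooth $V$-subalgebras arising there are localizations of finitely generated $V$-algebras, so after shrinking we may take $A$ to be of finite type over $V$, still contained in $V'$ and still containing the $a_i$ (if $A=C_{\mm'\cap C}$ with $C$ of finite type over $V$, then $C$ is smooth over $V$ on a principal open neighbourhood $D(h)$ of $\mm'\cap C$ large enough to contain the finitely many $a_i$, and $C_h\subset V'$ since $h$ is a unit of $V'$). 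Thus $\Spec A\to\Spec V$ is an honest smooth morphism.

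Next I would exhibit a $V$-rational point of the closed fibre of $\Spec A$. Write $\mm,\mm'$ for the maximal ideals of $V,V'$. Since $V\subset V'$ is an extension of valuation rings, $V'$ dominates $V$, so $\mm'\cap V=\mm$; and since the extension is immediate the induced map of residue fields $V/\mm\to V'/\mm'$ is an isomorphism. Hence the composite $A\hra V'\surjects V'/\mm'$ is a $V$-algebra homomorphism onto $k:=V/\mm$, that is, a $k$-point of $\Spec A$ lying over the closed point of $\Spec V$. Because $V$ is Henselian and $\Spec A\to\Spec V$ is smooth, the reduction map $(\Spec A)(V)\to(\Spec A)(k)$ is surjective, so this $k$-point lifts to a $V$-algebra homomorphism $s\colon A\to V$. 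Applying $s$ to the solution, the tuple $s(a)=(s(a_1),\ldots,s(a_m))$ lies in $V^m$ and satisfies $g_j(s(a))=s(g_j(a))=s(0)=0$ for all $j$, which is the required solution in $V$.

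The real content is entirely in Theorem \ref{T3}, so the remaining work is light. The only points requiring a little care are bookkeeping: that a finite subset of $V'$ sits inside a single finite-type smooth $V$-subalgebra — which the construction underlying Theorems \ref{key} and \ref{T3} provides once one reorganizes the nested filtered unions along a chain $V=V_0\subset\cdots\subset V_n=V'$, passing if necessary to a pure transcendental finite type subextension of $K'/K$ first — and the observation that immediateness is exactly what turns the reduction $A\to V'/\mm'$ into a $V$-point of the closed fibre, so that the Henselian lifting property of $V$ can be brought to bear. I do not anticipate a serious obstacle; the principle ``smooth morphism plus Henselian base admits sections through any residue point'' is standard, and everything else is extracted from results already established above.
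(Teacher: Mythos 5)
Your proposal is correct and matches the paper's route: the paper's proof simply applies Theorem \ref{T3} and then invokes \cite[Proposition 18]{P1}, which is exactly the Henselian lifting step you carry out by hand (a finite solution set lands in one essentially finite type smooth $V$-subalgebra $A\subset V'$, immediateness makes $A\to V'/\mm'$ a $V$-rational point of the closed fibre, and Henselianity of $V$ lifts it to a retraction $A\to V$ carrying the solution into $V$). So nothing is missing; you have merely reproved the cited proposition instead of quoting it.
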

 For the proof apply \cite[Proposition 18]{P1} after using Theorem \ref{T3}.

{\bf Proof of Corollary \ref{C}}

 By \cite[Lemma 27 (1)]{P} we see that $ V$ is an immediate extension of a valuation ring $ V_0$ which is a filtered  union of localizations of its polynomial $ k$-subalgebras (see \cite[Theorem 1, in VI (10.3)]{Bou}, or \cite[Lemma 26 (1)]{P}).  In fact, important here is that there exists a cross-section $s:\Gamma\to K^*$, since $\Gamma$ is free and  we take $V_0=V\cap k(s(\Gamma))$ (a {\em cross-section}
of $V$ is a section
in the category of abelian groups of the valuation map $\val : K^* \to \Gamma$). In our case, $V_0=V\cap k(y)$ and $s$ is defined by $\val(y)\to y$. 
Now it is enough to apply Theorem \ref{T3}
 for $V_0\subset V$.

\end{document}